%% file: manuscript.tex
\documentclass{article}

\usepackage[english]{babel}
\usepackage[utf8]{inputenc}
\usepackage{bbm}
\usepackage{enumitem}
\usepackage{amsmath} 
\usepackage{amssymb}
\usepackage{comment}
\usepackage{algorithm}
\usepackage{algorithmic}
\usepackage{amsthm}
\usepackage{todonotes}
\usepackage{placeins}
\usepackage[compact]{titlesec}
\usepackage{graphicx}
\usepackage{xcolor}
\usepackage{transparent}
\usepackage{leftidx}
\usepackage{framed}
\usepackage{mathtools}
\usepackage{url}
\usepackage[a4paper, total={6in, 8in}]{geometry}
\usepackage{pgfplots}
\pgfplotsset{compat=1.18}
\usepgfplotslibrary{colormaps}
\usepgfplotslibrary{patchplots}
\usepgfplotslibrary{fillbetween}
\usepgfplotslibrary{external}
\usepgfplotslibrary{groupplots}

\titleformat{\section}[hang]
{\filcenter\bfseries}
{\thesection. }{0pt}{}
\titleformat{\subsection}
{\bfseries}
{\thesubsection. }{0pt}{}

\newtheorem{proposition}{Proposition}[section]
\newtheorem{remark}[proposition]{Remark}
\newtheorem{example}[proposition]{Example}
\newtheorem{definition}[proposition]{Definition}
\newtheorem{lemma}[proposition]{Lemma}
\newtheorem{corollary}[proposition]{Corollary}
\newtheorem{theorem}[proposition]{Theorem}

\newtheorem{innersummary}{Summary}



\definecolor{TolVibrantBlue}{HTML}{0077BB}
\definecolor{TolVibrantCyan}{HTML}{33BBEE}
\definecolor{TolVibrantTeal}{HTML}{009988}
\definecolor{TolVibrantOrange}{HTML}{EE7733}
\definecolor{TolVibrantRed}{HTML}{CC3311}
\definecolor{TolVibrantMagenta}{HTML}{EE3377}
\definecolor{TolVibrantGray}{HTML}{BBBBBB}

\definecolor{TolMutedBlue}{HTML}{332288}
\definecolor{TolMutedCyan}{HTML}{88CCEE}
\definecolor{TolMutedTeal}{HTML}{44AA99}
\definecolor{TolMutedGreen}{HTML}{117733}
\definecolor{TolMutedOlive}{HTML}{999933}
\definecolor{TolMutedSand}{HTML}{DDCC77}
\definecolor{TolMutedRose}{HTML}{CC6677}
\definecolor{TolMutedWine}{HTML}{882255}
\definecolor{TolMutedPurple}{HTML}{AA4499}

\pgfmathsetmacro{\xmin}{0.12}\pgfmathsetmacro{\xmax}{0.55}
\pgfmathsetmacro{\ymin}{0.30}\pgfmathsetmacro{\ymax}{0.60}
\pgfmathsetmacro{\zmin}{-0.05}\pgfmathsetmacro{\zmax}{0.35}

\input{macros.tex}

\title{A Lagrange-Newton method for solving optimal control problems on manifolds}

\author{Laura Weigl \& Anton Schiela}

\begin{document}
\maketitle
\begin{abstract}
In PDE-constrained optimization a control is applied within a partial differential equation (PDE), e.g., as a force field, to achieve a desired configuration of the PDE solution. The objective is usually to find an optimal balance between control cost and deviation of the controlled state from the desired configuration.
We consider this problem class in a setting, where the PDE is given by a geometric variational equation and the state is a mapping into a nonlinear manifold. We define a Lagrangian function corresponding to this geometric optimal control problem and derive first-order optimality conditions. Moreover, we present a Lagrange-Newton method for their numerical solution. We introduce an implementable algorithm using a local splitting technique and show its equivalence to Newton's method applied to the derivative of the Lagrangian. The theoretical framework is illustrated by numerical examples, including the optimal control of elastic geodesics and surfaces of minimal energy by force fields and the control of inextensible rods and shells.\\
\;\\
\textbf{Keywords:}
geometric optimal control, Newton's method, Banach manifolds\\
\;\\
\textbf{MSC2020 Classification:}
49M15, 53-08, 49K20
\end{abstract}

\section{Introduction}
Optimal control problems governed by partial differential equations arise throughout the sciences and engineering, whenever one seeks to steer the state of a physical system, described by a PDE, toward a desired configuration while keeping the cost of the applied control small. The classical theory for such problems in \emph{linear spaces}, is by now well established: existence of optimal controls, derivation of first-order (KKT) and second-order optimality conditions, and a rich array of numerical solution methods, are available and well understood (cf., e.g., \cite{troltzsch2010optimal,hinze2008optimization}).

A great variety of problems, however, involve \emph{geometric variational equations}, i.e., PDEs whose solutions are mappings into a nonlinear manifold. Such problems are encountered frequently in continuum mechanics, or problems of stationary action \cite{MarsdenRaitu:1994}, in elasticity theory and geometry processing \cite{heeren2012time}, in image and shape analysis \cite{weinmann2014total}, in liquid crystal modeling \cite{hardt1986existence}, and in micromagnetics \cite{desimone2002reduced}. For an account on the numerical analysis see also \cite{bartels2015numerical,hardering2020geometric}. 
Formulating and solving optimal control problems for states governed by such geometric variational equations is therefore of considerable practical interest. For instance, one may wish to actuate an elastic rod or shell into a target shape by an external force field. 

Geometric optimal control has classically addressed finite-dimensional systems governed by ordinary differential equations on manifolds, formulating the Pontryagin maximum principle intrinsically via Lie brackets and Hamiltonian flows \cite{agrachev2013control,schattler2012geometric}, and more recently used the Riemannian metric itself as a control design tool \cite{goodman2026riemannian}. In the infinite-dimensional setting, a Riemannian view has proven fruitful in shape optimization, where the shape itself is treated as a point on a manifold and Lagrange–Newton methods have been developed accordingly \cite{Schulz2015}. Closely related, equality-constrained optimization on manifolds has been extended to SQP-type methods on Hilbert manifolds \cite{SchielaOrtiz:2021} and to manifold-valued constraints \cite{bergmann2022first}.

Extending PDE-constrained optimization to the geometric setting is, however, not a straightforward exercise. The absence of a global linear structure on the state space means that many of the standard tools from calculus, used in optimal control, cannot be applied verbatim. Instead, one has to introduce concepts and techniques from differential geometry. It turns out that the state and the adjoint state are coupled together in a subtle way. Nevertheless, one would like to preserve, as far as possible, the structural elegance of the classical Lagrangian approach.

In this paper we develop a framework for geometric optimal control problems constrained by variational equations on manifolds. We formulate a corresponding Lagrangian function, derive the resulting first-order optimality conditions, comprising the state equation, an adjoint equation, and a stationarity equation. Building on this optimality system, we devise a Lagrange–Newton method for its numerical solution. The main difficulty is to find a concise and reliable calculus to compute the Newton linear operator, including the quantities, related to the curvature of the underlying manifold.  All quantities are derived in weak form, allowing for implementations in terms of manifold valued finite elements. 
We illustrate the theoretical results on a range of examples of geometric optimal control problems. 

The remainder of the paper is organized as follows. After a recapitulation of the necessary basic concepts of differential geometry in Section 2, we discuss the notion of geometric variational problems and optimal control problems on manifolds in Section 3. For the analysis of this class of problems we require vector transports, as well as induced connections and covariant derivatives on vector bundles, which we recall  in Section 4. In Section 5 we then define an appropriate Lagrangian function, introduce a local splitting technique, and derive first-order optimality conditions corresponding to our geometric optimal control problem. Section 6 presents a Lagrange–Newton method for the numerical solution of this optimality system. We introduce an implementable framework and show that the presented algorithm is equivalent to a Newton method applied to the derivative of the Lagrangian. Section 7 illustrates the theoretical results by a couple of numerical experiments, including the control of elastic geodesics, surfaces of minimal energy, and inextensible rods and shells, before we finally give an outlook on future research directions in Section 8.
\section{Preliminaries}

We consider a \emph{Banach manifold} $\X$, i.e., a topological space $\X$ together with a collection of charts $(U,\phi)$, where each chart $\phi : U \to \mathbb X$ maps an open subset $U\subset\X$ homeomorphically onto an open subset of a real Banach space $\mathbb X$ with norm $\|\cdot\|_{\mathbb X}$. Unless otherwise noted, we assume $\X$ to be of class $C^2$, i.e.\ all chart transition mappings $t_{ji} := \phi_j\circ\phi_i^{-1}$ are $C^2$-diffeomorphisms on their domains. Mappings $F:\X\to\Y$ between two such manifolds are called continuous, differentiable, or of class $C^k$, if their representations in charts have the respective property. It can readily be shown that these properties are independent of the choice of charts.
For a manifold $\X$ we define tangent spaces by using charts, see, e.g., \cite[II §2]{Lang}. Let $x\in\X$ and consider the set 
\[ \lbrace (\phi,v) \; \vert \; \phi:U\rightarrow \mathbb{X} \text{ chart at } x\in\X, \ v\in\mathbb{X} \rbrace. \]
The relation $(\phi_i,v_i)\sim_x(\phi_j,v_j) \ :\Leftrightarrow \ t_{ji}^\prime(\phi_i(x))v_i = v_j$ defines an equivalence relation. The corresponding equivalence class is called a \emph{tangent vector} $v$ of $\X$ at $x$. The set of these equivalence classes at a point $x\in\X$ is a vector space, called the \emph{tangent space} $T_x\X$ of $\X$ at $x$.  Their union forms the \emph{tangent bundle} $\pi : T\X \to \X$, with fibres $\pi^{-1}(x) = T_x\X$, which is itself a $C^1$-manifold. For the tangent bundle $\pi: T\X \to \X$ of a manifold $\X$ we have charts in the following natural way:
Let $(U_i,\phi_i)$ be a chart of $\X$ at $x$. Then, every tangent vector $v\in T_x\X$ has a chart representation $v_i \in \mathbb{X}$. Using this representation, we obtain a \emph{tangent bundle chart} given by
    \begin{align*}
        \Phi_{T\X} : \pi^{-1}(U_i) &\to \mathbb{X} \times \mathbb{X}, \quad 
        (x,v) \mapsto (\phi_i(x), v_i).
    \end{align*}
Every differentiable map $F:\X\to\Y$ induces a linear \emph{tangent map}, also called \emph{derivative}, $F'(x) : T_x\X \to T_{F(x)}\Y$ at each $x\in\X$. A $C^1$-\textit{retraction}, a popular concept, used widely in numerical methods on manifolds (cf., e.g., \cite[Chap. 4]{absil2008optimization} or \cite{boumal2023intromanifolds}), is a $C^1$-mapping $R:T\X \rightarrow \X$, where $R_x : T_x\X\to \X$ for a fixed $x\in \X$ satisfies $R_x(0_x)=x$ and $R_x^\prime(0_x)=Id_{T_x\X}$.

\paragraph{Vector bundles.}
The tangent bundle is the most prominent example for a \emph{vector bundle}. A general vector bundle can be seen as a manifold with special structure. It consists of a \emph{total space} $\E$, a \emph{base manifold} $\Y$ and a \emph{base point projection} $\pi:\E\to\Y$. Unless otherwise noted, we assume that it is of class $C^1$. Its projection is a surjective $C^1$-map which assigns to each $e\in \E$ its base point $y=\pi(e) \in \Y$ on the base manifold $\Y$ \cite[III §1]{Lang}. Associated with any point $y \in \Y$ of the base manifold is a linear space $E_y := \pi^{-1}(y)$,
which is called the \emph{fiber} over $y$. It is a Banachable space, which means that it is a complete topological vector space whose topology is induced by some norm, rendering $E_y$ complete. The zero element in $E_y$ is denoted by $0_y$. The total space $\E$ is given by the union of all fibers $E_y$. Thus, every $e\in \E$ can be written as a pair $e=(y,p)$ with $y=\pi(e)$ and $p\in E_y$. 

Since a vector bundle is also a manifold it can be described via general local charts. However, usually \emph{local trivializations} are used instead, which reflect their structure better. For two Banach spaces $X$ and $Y$, we denote by $L(X,Y)$ the vector space of all continuous linear mappings from $X$ to $Y$. For a Banach space $(\mathbb E,\|\cdot\|_{\mathbb E})$ and an open set $U \subset \Y$, a local trivialization is a diffeomorphism $\tau : \pi^{-1}(U) \to U \times \mathbb E$, such that $\tau_y:=\tau|_{E_y} \in L(E_y,\mathbb E)$ is a linear isomorphism. Thus, any element of $e\in \E$ can be represented as a pair $(y,e_\tau)\in \Y\times \mathbb E$ with $y=\pi(e)$ and $e_\tau$ depending on the trivialization. This is, for example, useful if we want to compare elements of different fibres. In a condensed notation, we sometimes write $e\sim (y, e_\tau)$, which denotes the representation of $e$ in trivializations. Using the transition mapping $T(y) := \tilde{\tau}_y\tau_y^{-1} \in L(\mathbb E,\mathbb E)$ for two trivilizations, we obtain $e_{\tilde \tau} = T(y)e_\tau$. Note that the mapping $T$ is nonlinear in $y$, but linear in $e$ reflecting the linear structure of the fibres.
\begin{example}
    \label{rem:VBCharts}
    For the tangent bundle $\pi: T\X \to \X$ of a manifold $\X$ we can define trivializations naturally by setting $\tau_{x,i}(v):=v_i$, where $v_i$ is the chart representation of $v \in T_x\X$ in the chart $(U_i, \phi_i)$ of $\X$ at $x$. Since tangent vectors transform by $t_{ji}^\prime(\phi_i(x))v_{i} = v_{j}$ under change of charts, the transition mapping is given by $T(x)=t_{ji}^\prime(\phi_i(x))$.
\end{example}
A differentiable mapping $v : \Y \to \E$, such that $\pi(v(y))=y$ for all $y\in \Y$ is called \textit{section} of $\E$. The set of all sections is denoted by $\Gamma(\E)$. Sections of the tangent bundle are called \emph{vector fields}.
The \emph{cotangent space} $T_x\X^* := L(T_x\X,\R)$, where $L(T_x\X,\R)$ denotes the linear functionals on $T_x\X$, is the dual space of $T_x\X$. The corresponding \emph{cotangent bundle} $\pi^* : T^*\X \to \X$ has fibres $T_x\X^*$. Its sections are called \emph{covector fields} (or 1-forms). 
\begin{example}
    Consider a continuously differentiable function $f: \X \to \R$, where $\X$ is a manifold. Then, its derivative $f'$ is a section of the cotangent bundle $T^*\X$, associating to each $x\in \X$ the linearization $f'(x) \in L(T_x\mathcal X, \R)$.
\end{example}
This construction is also valid for \emph{dual vector bundles} $\DualBundleProjection:\E^*\to\Y$ where the fibres $E_y^*$ are defined as the dual spaces of the fibres $E_y$ of a given primal bundle $\pi : \E\to\Y$. 
If $\tau_\E\colon \E \to \mathbb E$ is a trivialization of $\E$ with $e_\tau=\tau_{\E,y}(e)$ for $e\in E_y$, then we obtain a local trivialization $\tau_{\E^*}\colon \E^*\to \mathbb E^*$ of the dual bundle $\E^*$ by the mapping $l_\tau=\tau_{\E^*}(l) \coloneqq l \circ \tau_{\E,y}^{-1}$, such that 
\begin{equation}
    \label{eq:leTriv}
    l_\tau(e_\tau)=l(e).
\end{equation}
We denote the \emph{dual pairing} for $\ell\in \Gamma(\E^*)$ and $e\in\Gamma(\E)$ by $\ell(e)\colon \Y \rightarrow \R, \; y \mapsto \ell(y)e(y) \in \R$.
\paragraph{Fibrewise linear mappings.}
A section $\ell \in \Gamma(\E^*)$ is a mapping $\ell : \Y \to \E^*$ assigning to each $y \in \Y$ a linear functional $\ell(y) \in E_y^*=L(E_y,\R)$. The concept of a \emph{fibrewise linear mapping} generalizes this by replacing the codomain $\mathbb{R}$ of each linear functional with the fibre of another vector bundle $\tilde{\pi}:\Y\to\tilde{\E}$. To this end, we consider for every $y\in\Y$ the set of continuous linear mappings $L(E_y,\tilde{E}_y)$ and collect all these in a \emph{vector bundle} $\L(\E,\tilde{\E})$, i.e., 
\[
    \L(\E,\tilde{\E}):= \{(y, l) \mid y\in \Y \text{ and } l \in L(E_{y}, \tilde{E}_y)\},
\]
with base point projection $\pi_\L : \L(\E,\tilde{\E}) \to \Y$, $(y, l) \mapsto y$ for $y\in \Y$ and fibres $L(E_{y}, \tilde{E}_y)$. 
A \emph{fibrewise linear mapping} is a section $S \in \Gamma(\L(\E,\tilde{\E}))$ of the vector bundle $\pi_\L : \L(\E,\tilde{\E}) \to \Y$, i.e,
\[
    S: \Y \to \L(\E,\tilde{\E}) \text{ with } \pi_\L \circ S=\mathrm{id}_{\Y}. 
\]
In particular, $S(y) : E_{y} \to \tilde{E}_y$ is a linear mapping for every $y\in \Y$.
From a fibrewise linear mapping $S$ we obtain a continuous mapping
\[
    \NE{S} : \E \to \tilde{\E}, \qquad v \mapsto \NE{S}(v) := S(\pi(v))\,v.
\]
We call a fibrewise linear mapping $S$ \emph{locally bounded} or \emph{differentiable}, if its representation in local trivializations
\begin{equation}\label{eq:FibrewiseTriv}
S_\tau : U \to L(\mathbb E,\tilde{\mathbb E}), \quad S_\tau(y) = \tau_{\L(\E, \tilde{\E})}(S) := \tau_{\tilde{\E},y}\circ S(y) \circ \tau_{\E,y}^{-1} \in L(\mathbb E,\tilde{\mathbb E})
\end{equation}
has the respective property.  
Then, $\NE{S}'$ satisfies, by the product rule, in trivializations $(y,e_\tau)\in \Y\times\mathbb E$ for $e\in E_y$
\begin{equation}
    \label{eq:FibrewiseLinMappingTrivializationDerivative}
    \NE{S}'(e)\delta e \sim \NE{S}_\tau'(y,e_\tau)(\delta y, \delta e_\tau) = (S_\tau'(y)\delta y) e_\tau + S_\tau(y)\delta e_\tau  \quad \text{ for all } (\delta y,\delta e_\tau) \in T_{y}\Y\times \mathbb E.
\end{equation}
Here, $\pi'(e)\delta e = \delta y$ for $\delta e \in T_e\E$.
\section{Variational problems and optimal control on manifolds}\label{sec:VariationalProblemsAndOCP}
Variational problems are at the heart of the analytical and numerical treatment of large classes of partial differential equations (PDEs). 
As a simple example on linear spaces, consider the following minimization problem on a bounded domain $\Omega\subset\R^2$:
\[
    \min_{y\in Y} \int_\Omega \frac12 \Vert y'(\omega) \Vert_F^2 - \langle f(\omega), y(\omega)\rangle_2 \, d\omega.
\]
Here, $Y$ is the Sobolev space $H_0^1(\Omega, \R^3)$ of $L_2(\Omega)$-mappings into $\R^3$ with weak derivatives in $L_2(\Omega)$ and Dirichlet boundary conditions, $f \in L_2(\Omega, \R^3)$, $\Vert \cdot \Vert_F$ denotes the Frobenius norm, and $\langle \cdot, \cdot \rangle_2$ denotes the Euclidean inner product. A solution of this problem describes an \emph{elastic surface of minimal energy} in $\R^3$ under the force field given by $f$. By differentiation with respect to $y\in Y$ in a direction $v\in H^1_0(\Omega, \R^3)$ we get the first-order optimality condition
\begin{equation}\label{eq:minimalSurfaceR3}
    \int_\Omega \langle y'(\omega), v'(\omega)\rangle_F - \langle f(\omega), v(\omega)\rangle_2  \, d\omega = 0 \quad \text{ for all } v\in Y,
\end{equation}
which is a \emph{variational equation} and the \emph{test functions} $v$ can be seen as infinitesimal perturbations of the solution $y$. 
The first and the second part of this integral may be viewed as a (linear) mapping $A$ and as an element $f$ of $Y^*$ defined by
\[
    A : Y\to Y^*, \; A(y)v := \int_\Omega \langle y'(\omega), v'(\omega)\rangle_F \,d\omega,  \qquad f\in Y^*, \;f\,v := \int_\Omega \langle f(\omega), v(\omega)\rangle_2  \, d\omega.
\]
Equation \eqref{eq:minimalSurfaceR3} then can be written as $A(y) = f \in Y^*$, or equivalently, $A(y)v = f \, v$ tested against all $v \in Y$. In general (for example in parabolic PDEs), the space of test functions, called $P$, can differ from $Y$, $f\in P^*$ and $A : Y \to P^*$ can be nonlinear. In this setting we obtain the variational equation
\[
 A(y)=f \text{ in } P^*  \quad \Leftrightarrow \quad A(y)v=f\, v \quad \text{for all } v\in P. 
\]
Finite elements can be used to solve these problem numerically, restructing the infinite dimensional function spaces $Y$ and $P$ by finite dimensional counterparts. 

In many applications, such as the theory of harmonic mappings \cite{helein2008harmonic}, continuum mechanics \cite{bartels2015numerical, MarsdenRaitu:1994}, and micromagnetism \cite{desimone2002reduced}, \emph{geometric variational problems} arise. Here, the \emph{state} $y$ is no longer valued in a fixed linear space, but in a manifold $\M$, e.g., when $y$ is constrained to take values on a sphere, a Lie group, or more generally a manifold of admissible states. 

As an illustration, we consider \eqref{eq:minimalSurfaceR3}, which is however now confined to an embedded submanifold $\M \subset \mathbb \R^3$. This means that we look for solutions $y \in \Y := H^1(\Omega,\M)$. This \emph{nonlinear} function space consists of all elements of $H^1(\Omega,\mathbb R^3)$ with values in $\M$ almost everywhere (cf., e.g.,  \cite{hajlasz2009sobolev}). In this setting, admissible perturbations of $y$ naturally live in the tangent space $T_y\Y$, which changes with $y$. Thus, we get the equation
\begin{equation}\label{eq:simplegeovar}
\int_\Omega \langle y'(\omega), v'(\omega)\rangle_F - \langle f(\omega), v(\omega)\rangle_2\; d\omega = 0 \quad \text{ for all } v \in T_y\Y.
\end{equation}
Here,  the linear space $T_y \mathcal Y$ of test functions can be characterized as follows:
\[
 T_y \mathcal Y = \{ v \in H^1(\Omega,\mathbb R^3) : v(\omega) \in T_{y(\omega)} \M \; \text{for almost every $\omega \in \Omega$} \}. 
\]
In contrast to the case of linear spaces, we now obtain a (fibrewise linear) mapping $A : \mathcal Y \to T^*\mathcal Y$ into the cotangent bundle $T^*\mathcal{Y}$ and a section $f:\mathcal Y \to T^*\mathcal Y$. Crucially, the space of test functions depends on $y$. Consequently, if we perturb $y$, we have to take into account that the test function has to be perturbed as well. A solution of the equation $A(y)=f(y)$ then yields a surface of minimal energy confined to the sphere and creating a balance of forces. Just as in the linear case, the vector bundle $\pi:\E \to \Y$ of test functions can differ from $T\Y$ in general, so that with $A\in \Gamma(\E^*)$ and $f\in\Gamma(\E^*)$ we obtain the geometric variational equation:
\[
 A(y)=f(y) \text{ in } E_y^* \quad \Leftrightarrow \quad A(y)v=f(y)v \quad \text{for all } v\in E_y. 
\]
Obviously, these problems are inherently nonlinear, but finite element methods have been developed for this class \cite{bartels2015numerical,hardering2020geometric}. In \cite{weigl2025newton} a Newton method has been proposed that can be applied as a solution algorithm to such geometric variational problems. 
\paragraph{PDE-constrained optimization.}
Instead of considering a given force field $f$, in PDE-constrained optimization an unknown force field, called the \emph{control} is computed, such that if the control is applied within the variational problem a desired configuration of the PDE solution is achieved. In our example, we use a control $u \in U := L_2(\Omega, \R^3)$ 
to act on the \emph{state} $y$. 
The objective is usually to find an optimal balance between control cost and deviation of the controlled state from the desired configuration. 
Classically, on linear spaces, PDE-constrained optimization problems can often be formulated in abstract operator form as
\begin{equation}\label{eq:classicOptControl}
\begin{aligned}
    &\min_{(y,u)\in Y\times U} J(y,u) 
     \hspace{0.4cm} \mathrm{s.t.} \; A(y) - Bu = 0 \text{ in } P^*, 
\end{aligned}
\end{equation}
where $Y, \; U$ and $P$ are linear reflexive spaces, $B:U\to P^*$ is linear, $A:Y\to P^*$ is a (possibly nonlinear) operator, modelling the weak form of a PDE with the space of test functions $P$, and $J : Y\times U \to \R$ is the objective function (cf., e.g., \cite{troltzsch2010optimal,hinze2008optimization}).

Let us extend this problem class to a setting, where the PDE is given by a geometric variational equation and the state is a mapping into a nonlinear manifold $\M$.
To this end, let $\Y$ be a Banach manifold of mappings into $\M$. Consider a vector bundle  $\pi:\E\to\Y$ and its dual bundle $\DualBundleProjection:\E^*\to\Y$. $\E$ will play the role of the space $P$ in the abstract form of the optimal control problem. In our examples, the vector bundle $\E$ will usually be the tangent bundle of $\Y$. 
However, we think that it is also in this case beneficial to distinguish between perturbations of $y\in\Y$ and test functions of the variational equation. 
Let $U$ be a linear space. Similarly to linear spaces we write the variational equation in operator form. For this, we consider a $C^2$-section $A \in \Gamma(\E^*)$, i.e., for every $y\in\Y$ we have a linear mapping $A(y) : E_y \to \R$, and a fibrewise linear $C^2$-mapping $B \in \Gamma(\L(\Y\times U, \E^*))$. This means that $B(y) : U\to E_y^*$ is a linear mapping for every $y\in \Y$. This yields the \emph{state equation}
\[
    A(y) - B(y)u = 0_y^* \quad \text{ in } E_y^*
\]
which is a geometric variational problem. In contrast to linear spaces, $B$ has to depend on $y$ as well to reflect that we test the equation with matching test functions in $E_y$. For a given objective function $J : \Y\times U \to \R$ we consider the geometric PDE-constrained optimization problem, also called \emph{geometric optimal control problem},
\begin{equation}\tag{GOCP}\label{eq:OCP}
\begin{aligned}
    &\min_{(y,u)\in \Y\times U} J(y,u) \hspace{0.4cm}
     \mathrm{s.t.} \; A(y) - B(y)u = 0_y^* \quad \mathrm{in} \; E_y^*.
\end{aligned}
\end{equation}
The manifold $\Y$ is called the \emph{state space}, $U$ is called the \emph{control space}, $y\in \Y$ is called \emph{state} and $u\in U$ is the \emph{control} that acts on the state. Some concrete examples are provided in Section~\ref{sec:Numerics}, below.

\section{Vector transports and connections} 

For a further analysis and development of algorithms for optimal control problems on manifolds we will need \emph{vector transports} and the concept of a \emph{connection} on a vector bundle. Both concepts are fundamental to differential geometry and closely related.

A \emph{vector transport} is a fibrewise linear mapping that can be used to map vectors from one fibre to another. A prominent example in differential geometry is the parallel transport along geodesics, but for many manifolds, alternatives are available, which are more easy to implement and more efficient to compute. 
For our purpose it is convenient to distinguish between two variants:
\begin{definition}
For a vector bundle $\pi:\E\rightarrow\Y$ and $\hat y \in \Y$ we define a \emph{vector forward transport} as a differentiable fibrewise linear mapping $\VTforward{\hat y} \in \Gamma(\L(\Y \times E_{\hat y}, \E))$, i.e.
\[
    \VTforward{\hat y} : \Y \to \L(\Y \times E_{\hat y}, \E), \ \text{with } \pi_\L(\VT{y}{\rightarrow}(y)) = y \; \text{ for all } y\in\Y,
\]
with the properties that $\VTforward{\hat y}(\hat y)=Id_{E_{\hat y}}$ and $\VTforward{\hat y}(y)$ is continuously invertible for all $y\in \Y$. \\
We define a \emph{vector back-transport} as a differentiable fibrewise linear mapping $\VTback{\hat y}\in \Gamma(\L(\E,\Y \times E_{\hat y}))$, i.e. 
\[
    \VTback{\hat y} : \Y \to \L(\E, \Y \times E_{\hat y}), \ \text{with } \pi_\L(\VT{\hat y}{\leftarrow}(y)) = y \;\text{ for all } y\in\Y
\]
with the properties that $\VTback{\hat y}(\hat y)=Id_{E_{\hat y}}$ and $\VTback{\hat y}(y)$ is continuously invertible for all $y\in \Y$. 
\end{definition}

\begin{figure}[h]
    \centering
    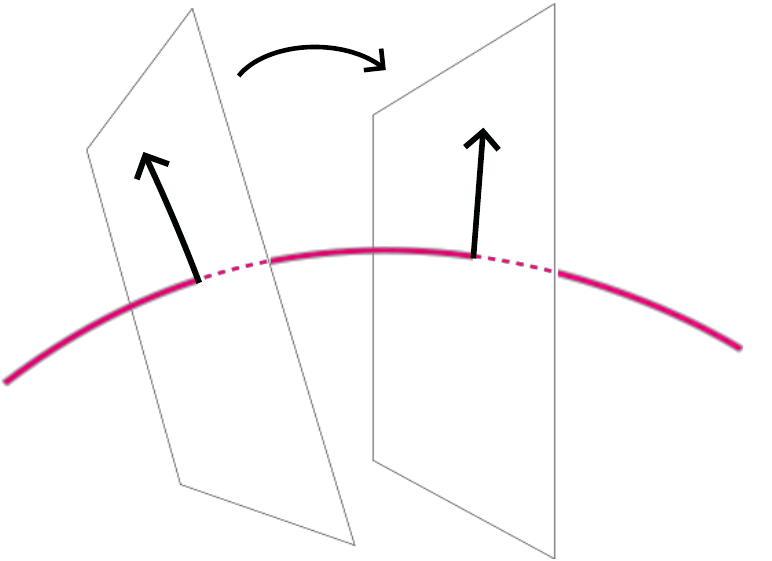
    \caption[Sketch of a vector forward transport]{Sketch of a vector forward transport $\VT{\hat y}{\rightarrow} \in \Gamma(\L(\Y\times E_{\hat y}, \E))$ with~$\VT{\hat y}{\rightarrow}(y) \in L(E_{\hat y}, E_{y})$.}\label{fig:VTforward}
\end{figure}

A visualization of a vector forward transport is given in
Figure~\ref{fig:VTforward}.
By restricting $\Y$ appropriately, vector transports can also be defined locally, which is enough for our purposes throughout the paper.
Given a vector forward transport $\VT{\hat y}{\rightarrow} \in \Gamma(\L(\Y\times E_{\hat y}, \E))$, we can derive a vector back-transport $\VT{\hat y}{\leftarrow}\in \Gamma(\L(\E,\Y \times E_{\hat y}))$ by taking fibrewise inverses $\VT{\hat y}{\leftarrow}(y) := (\VT{\hat y}{\rightarrow}(y))^{-1}$, $y\in\Y$, and vice versa. 

From a given vector forward transport $\VTforward{\hat y} \in \Gamma(\L(\Y\times E_y, \E))$ on $\E$, we define a \emph{covector back-transport} $\VTstar{\hat y}\in \Gamma(\L(\E^*, \Y\times E_{\hat y}^*))$, which acts as a vector back-transport on the dual bundle $\E^*$, by taking fibrewise \emph{adjoints}
    \[
        \VTstar{\hat y}(y) : E_{y}^* \to E_{\hat y}^* \quad \text{ with } \quad \VTstar{\hat y}(y) l :=\left(\VTforward{\hat y}(y)\right)^*l := l \circ \VTforward{\hat y}(y) \quad \text{ for } y\in \Y \text{ and } l\in E_{y}^*. 
    \]

\paragraph{Vector transports as local splittings.} 
Consider a vector back-transport $\VTback{\hat{y}} \in \Gamma(\L(\E, \Y\times E_{\hat{y}}))$ for a fixed $\hat{y}\in \Y$. 
Then, we obtain a \emph{local splitting} of the vector bundle $\pi:\E\to\Y$ around $\hat{y}$ by using  
\begin{equation}\label{eq:splittingE}
    \NE{\VTback{\hat{y}}}:\E \to \Y\times E_{\hat{y}} \quad \text{ with }  \NE{\VTback{\hat{y}}}(e) = (\pi(e), \VTback{\hat{y}}(\pi(e))e) \text{ for } e\in E_{\pi(e)}\subset \E.
\end{equation}
Furthermore, for $e = (y,p)\in \Y\times E_y$ and a tangent vector $\delta e \in T_e\E$ we obtain a \emph{splitting} of $\delta e$ into $(\delta y, \delta p)\in T_{y}\Y \times E_{\hat{y}}$ by applying
\[
    \NE{\VTback{\hat{y}}}'(e) : T_e\E \to T_y\Y\times E_{\hat{y}} \text{ with } \NE{\VTback{\hat{y}}}'(e)\delta e = (\delta y, \delta p).
\] 
Here, $\delta y = \pi'(e)\delta e$ is the \emph{tangential part} and $\delta p$ is the \emph{fibre part}, which by \eqref{eq:FibrewiseLinMappingTrivializationDerivative} depends on the trivialization: $\delta p_\tau=\VTback{\hat y,\tau}(y)\delta e_\tau+\big(\VTprime{\hat y,\tau}{\leftarrow}(y)\delta y\big)e_\tau $.

For a function $f : \E\to \R$ we obtain a \emph{locally splitted model} of $f$ by the composition with a vector forward transport:
\begin{equation}\label{eq:localModel}
    f_{\hat{y}} := f \circ \NE{\VTforward{\hat{y}}}: \Y \times E_{\hat{y}} \to \R
\end{equation}
with $f_{\hat{y}}(\hat{y},e) = f(e)$ for $e\in E_{\hat{y}}$. This will be particularly useful for the derivation of optimality conditions and an implementable Lagrange-Newton method. 
By the chain rule, applied to \eqref{eq:localModel} we obtain:
\[
  f_{\hat y}'(y,p)=f'(e)\NE{\VTforward{\hat{y}}}'(y,p) : T_y\Y \times E_{\hat{y}} \to \R, \quad \text{ where } e=\NE{\VTforward{\hat{y}}}(y,p).
\]

Finally, we can \emph{combine} $(\delta y, \delta p) \in T_y\Y \times E_{\hat{y}}$ to a vector $\delta e$ by using the derivative of the vector forward transport, i.e., for $(y, p) \in \Y \times E_{\hat{y}}$:
\[
    \NE{\VTforward{\hat{y}}}'(y,p) : T_y\Y \times E_{\hat{y}} \to  T_e\E \text{ with } e = \NE{\VTforward{\hat y}}(y,p)
\]

\begin{lemma}\label{lem:InvVPrime}
 Let $\VTforward{\hat y}$ be a vector forward transport and denote $\hat e=(\hat y, p) \in \E$. Then 
 \[
 \NE{\VTforward{\hat{y}}}'(\hat y,p) \in L(T_{\hat y}\Y \times E_{\hat{y}},T_{\hat e}\E)
 \]
 is continuously invertible.
\end{lemma}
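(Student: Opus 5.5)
The plan is to work in local trivializations, where $\NE{\VTforward{\hat y}}'(\hat y,p)$ becomes a block-triangular operator whose diagonal blocks are isomorphisms. Fix a chart $(U,\phi)$ of $\Y$ around $\hat y$ and a local trivialization $\tau$ of $\E$ over $U$. Tangent vectors $\delta e\in T_{\hat e}\E$ are then represented as pairs $(\delta y,\delta e_\tau)\in T_{\hat y}\Y\times\mathbb E$. The map $T_{\hat e}\E\to T_{\hat y}\Y\times\mathbb E$ is a linear homeomorphism, since $\tau$ is a diffeomorphism; equivalently, $(\delta y$ in chart$)\times\mathbb E$ gives a Banach space chart of $T_{\hat e}\E$. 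So it suffices to prove continuous invertibility of the representation of $\NE{\VTforward{\hat y}}'(\hat y,p)$ in these coordinates.

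Write $V:=\VTforward{\hat y}$ and $V_\tau(y):=\tau_y\circ V(y)\in L(E_{\hat y},\mathbb E)$ for $y\in U$; this is the trivialized representation \eqref{eq:FibrewiseTriv}, with the domain fibre $E_{\hat y}$ fixed. The map $\NE{V}(y,p)=(y,V(y)p)$ reads $(y,p)\mapsto(y,V_\tau(y)p)$ in trivializations. By the product rule \eqref{eq:FibrewiseLinMappingTrivializationDerivative}, its derivative at $(\hat y,p)$ is
\[
(\delta y,\delta p)\mapsto\big(\delta y,\;(V_\tau'(\hat y)\delta y)p+V_\tau(\hat y)\delta p\big).
\]
This is a lower block-triangular operator on $T_{\hat y}\Y\times E_{\hat y}\to T_{\hat y}\Y\times\mathbb E$. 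Its first diagonal block is the identity on $T_{\hat y}\Y$. Its second diagonal block is $V_\tau(\hat y)=\tau_{\hat y}\circ V(\hat y)=\tau_{\hat y}$, because $V(\hat y)=Id_{E_{\hat y}}$. Here $\tau_{\hat y}\in L(E_{\hat y},\mathbb E)$ is a linear isomorphism by definition of a trivialization, and its inverse is continuous by the bounded inverse theorem, since both spaces are Banach.

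The inverse can then be written down explicitly:
\[
(\delta y,w)\mapsto\big(\delta y,\;\tau_{\hat y}^{-1}(w-(V_\tau'(\hat y)\delta y)p)\big).
\]
This map is continuous because $\delta y\mapsto (V_\tau'(\hat y)\delta y)p$ is bounded linear, $V$ being differentiable. Composing with the continuous trivialization isomorphism on $T_{\hat e}\E$ gives continuous invertibility of $\NE{V}'(\hat y,p)$ as a map into $T_{\hat e}\E$.

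The main point needing care is the identification of $T_{\hat e}\E$ with $T_{\hat y}\Y\times\mathbb E$ through $\tau'$, together with consistent use of the normalization $V(\hat y)=Id$. The rest is the elementary fact that a triangular operator with invertible diagonal blocks is invertible. Invertibility of $V(y)$ for $y\ne\hat y$ is not even needed.
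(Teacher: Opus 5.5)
Your proof is correct, but it argues differently from the paper. The paper never writes the derivative in coordinates. It introduces the back-transport $\VTback{\hat y}(y):=(\VTforward{\hat y}(y))^{-1}$ and notes that $\NE{\VTback{\hat y}}\circ\NE{\VTforward{\hat y}}$ and $\NE{\VTforward{\hat y}}\circ\NE{\VTback{\hat y}}$ are identities. Differentiating both by the chain rule makes $\NE{\VTback{\hat y}}'(\hat e)$ a bounded two-sided inverse. You instead trivialize, observe the block-triangular structure with diagonal blocks $\mathrm{id}$ and $\tau_{\hat y}$, and invert by hand.

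Your route is more elementary and needs less: only $\VTforward{\hat y}(\hat y)=\mathrm{Id}$ and differentiability at $\hat y$. The paper uses invertibility of $\VTforward{\hat y}(y)$ near $\hat y$ and, tacitly, differentiability of the fibrewise inverse. That is true because operator inversion is smooth, but it is not spelled out.

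The paper's route is coordinate-free and applies verbatim at any point $(y,p)$, which is how the lemma is invoked right after \eqref{eq:LyDerivativeChainrule}. It also identifies the inverse as the splitting map $\NE{\VTback{\hat y}}'(\hat e)$ underlying the induced connection map.

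The two are consistent. By the trivialized formula for $\delta p_\tau$ and \eqref{eq:VTprimeForwardBackward}, your explicit inverse $(\delta y,w)\mapsto(\delta y,\tau_{\hat y}^{-1}(w-(V_\tau'(\hat y)\delta y)p))$ is precisely the coordinate expression of $\NE{\VTback{\hat y}}'(\hat e)$. Your argument also extends to general $(y,p)$: there the lower diagonal block $\tau_y\circ\VTforward{\hat y}(y)$ is an isomorphism by the invertibility requirement in the definition of a vector transport.
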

\begin{proof}
Define the vector back-transport $\VTback{\hat{y}}(y):=(\VTforward{\hat y}(y))^{-1}$ via fibrewise inverses. 
Then, for $(y,p)\in \Y \times E_{\hat{y}}$ we have $\NE{\VTback{\hat{y}}}(\NE{\VTforward{\hat{y}}}(y,p)) = (y,p)$ and $\NE{\VTforward{\hat{y}}}(\NE{\VTback{\hat{y}}}(e)) = e$ for $e\in\E$, at least locally. Thus, we get by the chain rule at $\hat e = (\hat{y}, p)$:
\begin{equation}\label{eq:VTprimeInverse}
    \NE{\VTback{\hat{y}}}'(\NE{\VTforward{\hat{y}}}(\hat y,p))\NE{\VTforward{\hat{y}}}'(\hat y,p)(\delta y, \delta p) = (\delta y, \delta p) \quad \text{ and } \quad \NE{\VTforward{\hat{y}}}'(\NE{\VTback{\hat{y}}}(\hat e))\NE{\VTback{\hat{y}}}'(\hat e)\delta e = \delta e
\end{equation}
for $(\delta y, \delta p) \in T_{\hat{y}}\Y\times E_{\hat{y}}$ and $\delta e \in T_{\hat{e}}\E$.
Thus, we obtain $\NE{\VTback{\hat{y}}}'(\NE{\VTforward{\hat{y}}}(\hat{y},p))\NE{\VTforward{\hat{y}}}'(\hat{y},p) = \id_{T_{\hat{y}}\Y\times E_{\hat{y}}}$ and $\NE{\VTforward{\hat{y}}}'(\NE{\VTback{\hat{y}}}(\hat{e}))\NE{\VTback{\hat{y}}}'(\hat{e}) = \id_{T_{\hat{e}}\E}$, and thus the desired continuous invertibility. 
\end{proof}
\paragraph{Connections, induced by vector transports.}
A \emph{connection} is a fundamental geometric structure imposed on $\pi:\E\to\Y$.
The most prominent example is the \textit{Levi-Civita connection} on $\E=T\X$ for a Riemannian manifold $\X$ (see, e.g., \cite[VIII §4]{Lang}). Connections give rise to further concepts like the covariant derivative, curvature, geodesics, and parallel transports. They are described in the literature in various equivalent ways (cf., e.g., \cite[IV §3 or X §4 or XIII]{Lang}). It is well known that the tangent space $T_e\E$ of a vector bundle $\pi:\E\to\Y$ at $e\in\E$ has a canonical subspace, the \emph{vertical space}
$\mathrm{Vert}_e=\ker\pi'(e)\cong E_{\pi(e)}$, but no canonical complement (cf. \cite[IV §3]{Lang}). A \emph{connection} on $\E$
resolves this by choosing a complement, the \emph{horizontal space} $\mathrm{Hor}_e$. Equivalently, one can choose a \emph{connection map} $Q_e:T_e\E\to E_{\pi(e)}$ at $e\in\E$ that projects $T_e\E$ onto the fibre over $e$, and set $\mathrm{Hor}_e:=\ker Q_e$. In this paper, we choose this formulation because we want to use the connection not only as a theoretical concept, but as an algorithmic tool, provided that $Q_e$ can be evaluated efficiently. 

The insight that the choice of a connection on $\E$ describes in an infinitesimal way how neighboring fibres are related gives rise to the idea of utilizing vector transports as the key ingredient for achieving an implementable framework. Thus, we consider a vector back-transport $\VTback{y}$ for fixed $y \in \Y$ and its induced \emph{local splitting} $\NE{\VTback{y}}: \E\to \Y\times E_{y}$ via \eqref{eq:splittingE} with $\hat{y}=y$. A connection is obtained from this splitting by viewing it \emph{infinitesimally} at a point $e \in E_{y}$:
\[
    \NE{\VTback{y}}'(e) : T_{e}\E \to T_{y}\Y\times E_{y} \text{ with } \NE{\VTback{y}}'(e)\delta e = (\delta y, \delta p).
\] 
For $\delta e\in T_{e}\E$, the tangential part 
$\delta y$ is canonically determined by $\delta y = \pi'(e)\delta e$. The fibre part is given by
\[
\delta p=\mathrm{pr}_2\left(\NE{\VTback{y}}'(e)\,\delta e\right)\in E_{y},
\]
 where $\mathrm{pr}_2:T_{y}\Y\times E_{y}\to E_{y}$ denotes the projection onto the second component.
For $\delta e\in\mathrm{Vert}_{e}=\ker\pi'(e)\cong E_{y}$ we have
$\delta y=0$, and in trivializations, according to \eqref{eq:FibrewiseLinMappingTrivializationDerivative},
\[
  \mathrm{pr}_2\left(\NE{\VTback{y}}'(e)\delta e\right)\sim\delta e_\tau+(\VTprime{y,\tau}{\leftarrow}(y)\delta y)\,e_\tau=\delta e_\tau .
\]
Hence, $\mathrm{pr}_2\circ\NE{\VTback{y}}'(e)$ restricted to $\mathrm{Vert}_{e}\cong E_{\hat{y}}$ is the identity,
i.e. it is a projection onto the fibre, as required of a connection map.
This yields an \emph{induced} connection $Q$ on $\E$ defined via $\VTback{y}$ (cf. \cite[Lemma 2.2]{weigl2024newton}). 
\begin{definition}
\label{def:VTconnection}
Let $\pi:\E\to\Y$ be a vector bundle, $\VT{y}{\leftarrow} \in  \Gamma(\L(\E,\Y \times E_{y}))$ be a vector back-transport and $e\in E_y$, $y= \pi(e)$. We define the \emph{induced connection map} at $e$ by setting
\[
    Q_{e}:= \mathrm{pr}_2\circ\NE{\VT{y}{\leftarrow}}^\prime(e):T_{e}\E \to E_{y}.
\]
For $\delta e \in T_e\E$ with $\delta e \sim (\delta y, \delta e_\tau) \in T_y\Y \times \mathbb{E}$ its representation in trivializations is given by
\begin{equation*}
    Q_e\delta e \sim Q_{e,\tau}(\delta y,\delta e_\tau)=\delta e_\tau + (\VTprime{{y,\tau}}{\leftarrow}(y)\delta y) e_\tau.
\end{equation*}
If $\VT{y}{\leftarrow}$ is defined via fibrewise inverses of a vector forward transport $\VT{y}{\rightarrow}\in \Gamma(\L(\Y\times E_y,\E))$, then 
\begin{equation}\label{eq:VTprimeForwardBackward}
(\VTprime{{y,\tau}}{\leftarrow}(y)\delta y) e_\tau=-(\VTprime{{y,\tau}}{\rightarrow}(y)\delta y) e_\tau.
\end{equation}
\end{definition}
A connection on the tangent bundle $T\Y$ is called \emph{torsion-free} if the bilinear mapping $\VTprime{y,\Phi}{\leftarrow}(y) : \mathbb{Y} \times \mathbb{Y} \to \mathbb{Y}$ is symmetric in natural tangent bundle charts. The choice of a connection map on $\E$ gives rise to the definition of the \emph{covariant derivative} of a section $F\in \Gamma(\E)$. 
\begin{definition}[Covariant derivative]\label{def:covariantDerivative}
Let $\pi:\E\to\Y$ be a vector bundle equipped with a connection map $Q_{e}:= \NE{\VT{y}{\leftarrow}}^\prime(e):T_{e}\E \to E_{y}$ for $e\in E_y, \, y=\pi(e)$, induced by a vector back-transport $\VTback{y}$. For a differentiable section $F\in \Gamma(\E)$ we define the \emph{covariant derivative} of $F$ as a section of $\L(T\Y, \E)$, i.e., 
\[
    \nabla^\E F : \Y \to \L(T\Y, \E)\quad \text{ with }\quad 
    \nabla^\E F(y) := Q_{F(y)}\circ F'(y) : T_y\Y \to E_y \quad \text{ for } y\in \Y.
\]
\end{definition}
On the dual bundle $\DualBundleProjection\colon\E^* \to \Y$ we consider a covector back-transport defined via fibrewise adjoints of a vector forward transport $\VTforward{y} \in \Gamma(\L(\Y\times E_{y}, \E))$. For $l \in \E^*$ we define a connection on $\E^*$ according to Definition \ref{def:VTconnection} by setting
\begin{equation*}
Q_l^* := \mathrm{pr}_2 \circ \NE{\VTstar{y}}'(l):T_{l}\E^*  \to E^*_{y}, \;  y=\DualBundleProjection(l).
\end{equation*}
Its representation in trivializations is given by 
 \begin{equation}\label{eq:Qstartriveq}
 (Q^*_l\delta l)(v)=Q^*_{l,\tau}(\delta y,\delta l_\tau)(v_\tau)=\delta l_\tau(v_\tau) + l_\tau\circ (\VTprime{{y,\tau}}{\rightarrow}(y)\delta y) v_\tau
\end{equation}
for all $v\in E_y$, $\delta l \in T_l\E^*$ and $\delta y \in T_y\Y$. Analogously to Definition \ref{def:covariantDerivative}, we define a covariant derivative $\nabla^{\E^*}$ of a section $\ell\in \Gamma(\E^*)$ by setting $\nabla^{\E^*}\ell(y) := Q^*_{\ell(y)}\circ \ell'(y)$ for $y\in \Y$. We obtain the following relation between primal and dual connections (cf. \cite[Prop. 4.2]{weigl2025newton}).
\begin{proposition}\label{pro:PrimalDualConnectionVT}
    Let $\pi:\E\to\Y$ be a vector bundle and $\DualBundleProjection:\E^* \to \Y$ its dual bundle. Consider a vector forward transport $\VTforward{y}\in \Gamma(\L(\Y\times E_y,\E))$ and let $\VTback{y}$ be defined by taking fibrewise inverses. Then, the choice 
    \begin{equation*}
 \mathrm{pr}_2\circ \NE{\VTstar{y}}'(l):T_{l}\E^*  \to E^*_{y},
\end{equation*}   
    for $l\in \E^*$ with $y=\DualBundleProjection(l)$ defines the dual connection map $Q_l^*$ on $\E^*$ corresponding to the connection map $Q_e = \mathrm{pr}_2\circ\NE{\VTback{y}}'(e)$ at $e$ with $y=\pi(e)=\DualBundleProjection(l)$ on $\E$, i.e., the product rule
    \begin{equation}\label{eq:defdualconnection}
        \begin{aligned}
        \ell(e)'(y) \delta y &= (Q_{\ell(y)}^*\circ \ell^\prime(y)\delta y)(e(y)) + \ell(y)(Q_{e(y)}\circ e^\prime(y) \delta y)\\
        &=\left(\nabla^{\E^*} \ell(y)\delta y\right)e(y) + \ell(y)(\nabla^\E e(y)\delta y)
        \end{aligned}
    \end{equation}
    holds for all $e \in \Gamma(\E)$, $\ell \in \Gamma(\E^*)$ and for all $(y,\delta y) \in T\Y$.
\end{proposition}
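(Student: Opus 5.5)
The plan is to verify the product rule \eqref{eq:defdualconnection} by a direct computation in local trivializations around a fixed $y\in\Y$. Fix $y$, a trivialization $\tau$ of $\E$ near $y$, and the induced trivialization of $\E^*$ given by $l_\tau = l\circ\tau_{\E,y}^{-1}$, so that \eqref{eq:leTriv} holds: $\ell(y')\,e(y') = \ell_\tau(y')\,e_\tau(y')$ for all $y'$ near $y$. Here $e_\tau,\ell_\tau$ are the representations of the sections in $\mathbb E$ and $\mathbb E^*$. The scalar function $\ell(e)$ is then the composition of the bilinear dual pairing $\mathbb E^*\times\mathbb E\to\R$ with the differentiable maps $y'\mapsto \ell_\tau(y')$ and $y'\mapsto e_\tau(y')$. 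The ordinary product rule in Banach spaces gives
\[
\ell(e)'(y)\delta y = (\ell_\tau'(y)\delta y)(e_\tau(y)) + \ell_\tau(y)(e_\tau'(y)\delta y).
\]

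Next I expand the two covariant terms in trivializations. By Definition~\ref{def:VTconnection} together with \eqref{eq:VTprimeForwardBackward}, the primal term is
\[
\nabla^\E e(y)\delta y \sim e_\tau'(y)\delta y - (\VTprime{y,\tau}{\rightarrow}(y)\delta y)\,e_\tau(y).
\]
By \eqref{eq:Qstartriveq}, the dual term is
\[
(\nabla^{\E^*}\ell(y)\delta y)(e(y)) = (\ell_\tau'(y)\delta y)(e_\tau(y)) + \ell_\tau(y)\big((\VTprime{y,\tau}{\rightarrow}(y)\delta y)e_\tau(y)\big).
\]
Here the tangent vector $\ell'(y)\delta y$ has the representation $(\delta y,\ell_\tau'(y)\delta y)$, and pairing the primal term with $\ell(y)$ is computed via \eqref{eq:leTriv}. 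Adding the two expressions, the transport-derivative contributions cancel exactly, and what remains is the right-hand side of the product-rule formula above. This yields \eqref{eq:defdualconnection}.

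It remains to justify the trivialized formula \eqref{eq:Qstartriveq} for $Q^*$, i.e.\ that $\mathrm{pr}_2\circ\NE{\VTstar{y}}'(l)$ has that representation, together with the sign relation \eqref{eq:VTprimeForwardBackward}. For the former, differentiate the adjoint $\VTstar{y,\tau}(y')= (\VTforward{y,\tau}(y'))^*$ with the product rule \eqref{eq:FibrewiseLinMappingTrivializationDerivative}, using $\VTforward{y}(y)=Id$. The derivative of the adjoint at $y$ is the adjoint of the derivative, which produces the term $l_\tau\circ(\VTprime{y,\tau}{\rightarrow}(y)\delta y)$. For the sign relation, differentiate $\VTback{y,\tau}(y')\VTforward{y,\tau}(y')=Id$ at $y'=y$.

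The main obstacle I expect is bookkeeping rather than substance: making sure the trivialization of $\E^*$ is the one dual to that of $\E$, so that the same $\tau$-derivative appears in both terms. A different choice would introduce transition-map derivatives. Since both sides of \eqref{eq:defdualconnection} are intrinsic, computing in one compatible pair of trivializations suffices.
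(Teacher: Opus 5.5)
Your proof is correct. It is essentially the argument the paper relies on: the paper gives no separate proof of this proposition and instead cites \cite[Prop.~4.2]{weigl2025newton}, but it carries out the same trivialization computation in \eqref{eq:linStateEqTrivialization}. There, too, the product rule is applied to the pairing via \eqref{eq:leTriv}, and the $\VTprime{y,\tau}{\rightarrow}(y)\delta y$ terms from \eqref{eq:Qstartriveq} and \eqref{eq:VTprimeForwardBackward} cancel. Your supporting checks are also sound: that \eqref{eq:Qstartriveq} follows from differentiating the adjoint of $\VTforward{y,\tau}$, and that the sign relation follows from differentiating $\VTback{y,\tau}\VTforward{y,\tau}=\mathrm{Id}$, in trivializations of $\E^*$ dual to those of $\E$.
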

\paragraph{Practical computation of covariant derivatives.}
The following two Propositions give a practical way to compute covariant derivatives of sections if the connection is given by differentiating a vector transport. 
   \begin{proposition}
    \label{prop:computationPrimalConnection}
    Let $\pi : \E\to \Y$ be a vector bundle and $\VTback{y}\in \Gamma(\L(\E, \Y\times E_y))$ be a vector back-transport. Consider the induced connection map $Q_e = \mathrm{pr}_2\circ\langle{\VTback{y}}\rangle'(e)$ at $e \in E_y, \; y=\pi(e)$. Let $G\in \Gamma(\E)$. Then, for all $y\in\Y$ the covariant derivative of $G$ can be computed by
    \[
        \nabla^\E G(y)\delta y = \frac{d}{d\eta}\left(\VTback{y}(\eta)G(\eta)\right)\Big\vert_{\eta=y} \delta y \quad \text{ for all } \delta y \in T_y\Y.
    \]
    In particular, if $G$ is defined by $G(\eta) = \VTforward{y}(\eta)\phi$ for $\eta \in \Y$, where $\phi \in E_y$ is fixed, and $\VTforward{y}$ is defined via fibrewise inverses of $\VTback{y}$, we obtain
    \[
        \nabla^\E G(y)\delta y = 0 \quad \text{ for all } \delta y \in T_y\Y.
    \]
\end{proposition}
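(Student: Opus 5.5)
The plan is to unfold Definition~\ref{def:covariantDerivative} and apply the chain rule to the composition of $G$ with the local splitting $\NE{\VTback{y}}$ from \eqref{eq:splittingE}. Fix $y\in\Y$. By definition, $\nabla^\E G(y)\delta y = Q_{G(y)}G'(y)\delta y = \mathrm{pr}_2\big(\NE{\VTback{y}}'(G(y))\,G'(y)\delta y\big)$. Consider the map $\eta \mapsto \NE{\VTback{y}}(G(\eta)) = (\pi(G(\eta)), \VTback{y}(\pi(G(\eta)))G(\eta)) = (\eta, \VTback{y}(\eta)G(\eta))$, where we use that $G$ is a section, so $\pi\circ G = \mathrm{id}$. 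Its second component, $\eta\mapsto \VTback{y}(\eta)G(\eta)$, takes values in the fixed Banach space $E_y$. This is the key point: its derivative is an ordinary Fréchet derivative.

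First, differentiate this composite at $\eta=y$ by the chain rule. This gives $\NE{\VTback{y}}'(G(y))G'(y)\delta y$. Second, since $\mathrm{pr}_2$ is linear and continuous on $T_y\Y\times E_y$, it commutes with differentiation. Hence
\[
\mathrm{pr}_2\big(\NE{\VTback{y}}'(G(y))G'(y)\delta y\big) = \frac{d}{d\eta}\big(\VTback{y}(\eta)G(\eta)\big)\Big\vert_{\eta=y}\delta y,
\]
which is the first claim. As a consistency check, one can verify the formula in trivializations. Writing $G(\eta)\sim(\eta, G_\tau(\eta))$, the product rule \eqref{eq:FibrewiseLinMappingTrivializationDerivative} gives $\VTback{y,\tau}(y)G_\tau'(y)\delta y + (\VTprime{y,\tau}{\leftarrow}(y)\delta y)G_\tau(y)$. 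Using $\VTback{y}(y)=Id$, this matches $Q_{e,\tau}$ in Definition~\ref{def:VTconnection}.

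For the particular case, take $G(\eta)=\VTforward{y}(\eta)\phi$ with $\VTforward{y}(\eta)=(\VTback{y}(\eta))^{-1}$. Then $\VTback{y}(\eta)G(\eta)=\phi$ for all $\eta$, at least locally where the transports are defined. This is a constant map into $E_y$, so its derivative vanishes, and the first part gives $\nabla^\E G(y)\delta y=0$. One should also note that $G$ is a differentiable section: $\pi(G(\eta))=\eta$ holds because $\VTforward{y}(\eta)\in L(E_y,E_\eta)$, and differentiability follows from that of $\VTforward{y}$.

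The only delicate step is justifying that differentiating the $E_y$-valued composite is legitimate, i.e., that the second component of $\NE{\VTback{y}}\circ G$ is a genuine differentiable map into a fixed linear space. This is exactly what the local splitting provides, and everything else is the chain rule. I would therefore state explicitly that $\NE{\VTback{y}}$ maps into the product $\Y\times E_y$, whose tangent space at $(y,p)$ is canonically $T_y\Y\times E_y$. The second component of the derivative is then literally the derivative of the second component.
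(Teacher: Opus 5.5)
Your proof is correct, and your second part (that $\VTback{y}(\eta)\VTforward{y}(\eta)\phi=\phi$ is constant in $\eta$) is exactly the paper's argument.

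For the first part the routes differ slightly. The paper works entirely in a local trivialization. It differentiates $\VTback{y,\tau}(\eta)G_\tau(\eta)$ by the product rule to get $G_\tau'(y)\delta y+(\VTprime{y,\tau}{\leftarrow}(y)\delta y)G_\tau(y)$. It then recognizes this as the trivialized connection map $Q_{G(y),\tau}(\delta y,G_\tau'(y)\delta y)$ from Definition~\ref{def:VTconnection}, which is precisely what you relegate to a consistency check. Your main argument is intrinsic instead. You apply the chain rule to $\NE{\VTback{y}}\circ G$ and use $\pi\circ G=\id$, together with the fact that the second factor of $\Y\times E_y$ is a fixed linear space.

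What each route buys:
\begin{itemize}
\item Your version works directly from the defining formula $Q_e=\mathrm{pr}_2\circ\NE{\VTback{y}}'(e)$. It needs no bookkeeping about compatible trivializations, such as the implicit requirement behind $\VTback{y,\tau}(y)=\id$.
\item The paper's version immediately produces the coordinate expression that is reused later when computing $\partial_y L_{\hat y}$ and $\partial_{yy}L_{\hat y}$.
\end{itemize}

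One small point of phrasing. Pulling $\mathrm{pr}_2$ inside the derivative is not justified by $\mathrm{pr}_2$ being linear and continuous. The reason is that $\mathrm{pr}_2$ is the tangent map of the projection $\Y\times E_y\to E_y$, $(\eta,q)\mapsto q$, under $T_{(\eta,q)}(\Y\times E_y)\cong T_\eta\Y\times E_y$, so the chain rule applies. This is what your final paragraph correctly says.
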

\begin{proof}
    Let $y\in \Y$. We compute, using local trivializations $\tau$ of $\E$ for arbitrary $\delta y \in T_y\Y$ 
    \begin{align*}
        \frac{d}{d\eta}\left(\VTback{y}(\eta)G(\eta)\right)\Big\vert_{\eta=y} \delta x &= \frac{d}{d\eta}\Big(\VTback{y,\tau}(\eta)G_\tau(\eta)\Big)\Big|_{\eta=y}\delta y
        =G_\tau'(y)\delta y+\VTback{y,\tau}'(y)\delta y\left(G_\tau(y)\right)\\
        &\sim Q_{G(y)}\circ G'(y)\delta y = \nabla^\E G(y)\delta y.
    \end{align*}
    Let $\phi\in E_y$ be fixed. Consider $G\in \Gamma(\E)$ defined by $G(\eta) = \VTforward{y}(\eta)\phi$ for $\eta \in \Y$. Then we obtain
        \begin{align*}
        \nabla^\E G(y)\delta y &= \frac{d}{d\eta}\left(\VTback{y}(\eta)\VTforward{y}(\eta)\phi\right)\Big\vert_{\eta=y} \delta y
        = \frac{d}{d\eta}\phi\Big|_{\eta=y}\delta y
        =0 \quad \text{ for all } \delta y \in T_y\Y.
    \end{align*}
\end{proof}
Similarly, we get a result for dual covariant derivatives (cf. \cite[Prop. 4.3]{weigl2025newton}). It is central for the derivation of Newton's method for variational problems on manifolds. 
\begin{proposition}
    \label{prop:computationDualConnection}
    Let $\pi : \E\to \Y$ be a vector bundle and $q : \E^* \to \Y$ its dual bundle. Let $\VTforward{y}\in \Gamma(\L(\Y\times E_y,\E))$ be a vector forward transport and $\VTback{y}^*$ be given by taking fibrewise adjoints. Consider the induced connection map $Q^*_l = \mathrm{pr}_2 \circ \langle{\VTback{y}^*}\rangle'(l)$ at $l\in \E^*$ with $y=q(l)$. Let $F \in \Gamma(\E^*)$. Then, for all $y\in \Y$ the covariant derivative of $F$ can be computed by
   \begin{equation}\label{eq:MvTfT}
    \left(\nabla^{\E^*}F(y)\delta y\right) e =\frac{d}{d\eta}\Big(F(\eta)(\VTforward{y}(\eta)e)\Big)\Big|_{\eta=y}\delta y \quad \text{ for all } e\in E_{y} \text{ and all } \delta y \in T_y\Y.
   \end{equation}
\end{proposition}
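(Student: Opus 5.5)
The plan is to mirror the proof of Proposition~\ref{prop:computationPrimalConnection} and work entirely in local trivializations. Fix $y\in\Y$, a local trivialization $\tau$ of $\E$ around $y$, and the induced dual trivialization $\tau_{\E^*}$ given by $l_\tau = l\circ\tau_{\E,\eta}^{-1}$. For $e\in E_y$ we then use the identity \eqref{eq:leTriv}, $l_\tau(e_\tau)=l(e)$, fibrewise at every base point $\eta$ near $y$.

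First we rewrite the scalar function on the right-hand side of \eqref{eq:MvTfT} in trivializations. For $\eta$ near $y$,
\[
F(\eta)\big(\VTforward{y}(\eta)e\big) = F_\tau(\eta)\big(\VTforward{y,\tau}(\eta)e_\tau\big),
\]
where $\VTforward{y,\tau}(\eta)\in L(\mathbb E,\mathbb E)$ is the representation \eqref{eq:FibrewiseTriv}. Here $e_\tau$ is the fixed representation of $e$ in $E_y$; since the domain fibre $E_y$ is fixed, we may use the same trivialization at $y$ on that side. The map is a composition of the differentiable maps $\eta\mapsto F_\tau(\eta)\in\mathbb E^*$ and $\eta\mapsto \VTforward{y,\tau}(\eta)e_\tau\in\mathbb E$, paired bilinearly. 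By the product rule its derivative at $\eta=y$ in direction $\delta y$ is
\[
\big(F_\tau'(y)\delta y\big)\big(\VTforward{y,\tau}(y)e_\tau\big) + F_\tau(y)\big((\VTprime{y,\tau}{\rightarrow}(y)\delta y)e_\tau\big)
= \big(F_\tau'(y)\delta y\big)(e_\tau) + F_\tau(y)\circ(\VTprime{y,\tau}{\rightarrow}(y)\delta y)\,e_\tau .
\]
The equality uses $\VTforward{y}(y)=Id_{E_y}$, so that $\VTforward{y,\tau}(y)=\mathrm{id}_{\mathbb E}$.

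Next we compare this with \eqref{eq:Qstartriveq}, setting $l=F(y)$ and $\delta l = F'(y)\delta y$. In trivializations, $\delta l\sim(\delta y,\delta l_\tau)$ with $\delta l_\tau = F_\tau'(y)\delta y$ and $l_\tau = F_\tau(y)$. Formula \eqref{eq:Qstartriveq} then gives
\[
(Q^*_{F(y)}F'(y)\delta y)(e) = \delta l_\tau(e_\tau) + l_\tau\circ(\VTprime{y,\tau}{\rightarrow}(y)\delta y)e_\tau ,
\]
which coincides with the expression above. By the definition $\nabla^{\E^*}F(y)=Q^*_{F(y)}\circ F'(y)$, this yields \eqref{eq:MvTfT}.

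The main point needing care is the justification of \eqref{eq:Qstartriveq} itself, i.e.\ that the trivialized derivative of $\VTstar{y}$ is $-\big(\VTprime{y,\tau}{\leftarrow}\big)^*$, which equals $\big(\VTprime{y,\tau}{\rightarrow}\big)^*$ at $y$ by \eqref{eq:VTprimeForwardBackward}. The paper states this formula, so we can take it as given. If needed, it can be verified directly: differentiate $l_\tau\mapsto l_\tau\circ\VTforward{y,\tau}(\eta)$ via \eqref{eq:FibrewiseLinMappingTrivializationDerivative} and use $\VTforward{y,\tau}(y)=\mathrm{id}$.

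An alternative route avoids the trivialized formula for $Q^*$. Apply the product rule \eqref{eq:defdualconnection} of Proposition~\ref{pro:PrimalDualConnectionVT} to $\ell=F$ and the section $G(\eta)=\VTforward{y}(\eta)e$. Proposition~\ref{prop:computationPrimalConnection} gives $\nabla^\E G(y)=0$, and $G(y)=e$, so the second term vanishes. The left side is exactly the derivative in \eqref{eq:MvTfT}. We would present this short argument as the main proof and keep the trivialization computation as a consistency check. It requires that $G$ is a differentiable section near $y$ and that the connection on $\E$ is induced by the fibrewise inverse $\VTback{y}$, both of which hold by assumption.
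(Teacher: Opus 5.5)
Your proposal is correct. The trivialization computation in your first three paragraphs is exactly the paper's proof: you rewrite the pairing via \eqref{eq:leTriv}, apply the product rule, use $\VTforward{y,\tau}(y)=\mathrm{id}$, and recognise the result as \eqref{eq:Qstartriveq} with $l=F(y)$ and $\delta l=F'(y)\delta y$.

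The argument you would actually present as the main proof is a genuinely different route. You apply the primal--dual product rule \eqref{eq:defdualconnection} of Proposition~\ref{pro:PrimalDualConnectionVT} to $F$ and the transported section $G(\eta)=\VTforward{y}(\eta)e$. You then remove the second term with Proposition~\ref{prop:computationPrimalConnection}. This is valid and coordinate-free.

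Its advantage is conceptual. It explains the moving test function technique as the dual of the fact that a transported section is covariantly constant at its anchor point. This is the same mechanism the paper uses later, when it derives the adjoint equation from \eqref{eq:AdjointEquationAugmented}.

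The price is that your route rests on Proposition~\ref{pro:PrimalDualConnectionVT}. The paper imports that result from \cite{weigl2025newton} without proof, and its verification is a trivialized product-rule computation of the same kind. So the coordinate work is relocated rather than avoided. The paper's direct argument needs only the definition of $Q^*$ through the adjoint transport, and it never introduces a connection on $\E$.

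One small correction of wording: the statement involves no connection on $\E$. So the primal connection induced by $\VTback{y}:=(\VTforward{y})^{-1}$ does not hold ``by assumption''; it is an auxiliary choice you are free to make. This is harmless, since the identity concerns only $\nabla^{\E^*}$.
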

\begin{proof}
        Let $y\in \Y$ and $e\in E_{y}$. We compute, using local trivializations $\tau$ of $\E$,~\eqref{eq:leTriv} and the product rule for arbitrary $\delta y\in T_y\Y$:
\begin{align*}
 \frac{d}{d\eta}\Big(F(\eta)(\VTforward{y}(\eta)e)\Big)\Big|_{\eta=y}\delta y &= \frac{d}{d\eta}\Big(F_\tau(\eta)(\VTforward{y,\tau}(\eta)e_\tau)\Big)\Big|_{\eta=y}\delta y = F_\tau'(y)\delta y(e_\tau)+(F_\tau(y)\circ \VTprime{y,\tau}{\rightarrow}(y)\delta y)(e_\tau)\\
 &=Q^*_{F(y),\tau}(\delta y, F_\tau'(y)\delta y)(e_\tau) = (Q^*_{F(y)}\circ F'(y)\delta y)(e) = \left(\nabla^{\E^*}F(y)\delta y\right)e.
\end{align*}
\end{proof}
The idea behind \eqref{eq:MvTfT} is that a perturbation of $F$ must be tested with a transported test function $v(y):=\VTforward{y}(\eta)e$ in order to obtain a well defined dual pairing on the perturbed fibre. 
 We thus call the application of \eqref{eq:MvTfT} the \emph{moving test function technique}.

\section{Lagrange function and optimality conditions}\label{sec:OptimalityConditions}
Throughout the remainder of this paper, we assume that $\E$ and $\E^*$ are $C^2$-vector bundles. Whenever we require a connection on $\E$ or $\E^*$, we understand it to be the one induced by a chosen vector transport according to Definition \ref{def:VTconnection} and Proposition \ref{pro:PrimalDualConnectionVT}. With this, we can now come back to our optimal control problem \eqref{eq:OCP}. In this section, we will define an appropriate Lagrangian function and derive first-order optimality conditions.

To motivate our approach, let us start with a classical abstract optimal control problem \eqref{eq:classicOptControl} in linear spaces to which we apply the \emph{formal Lagrange technique}.
The Lagrangian function corresponding to this optimal control problem is defined by testing the state equation with a Lagrangian multiplier $p \in P$, also called \emph{adjoint state}:
\begin{equation}\label{eq:adjointlinear}
    L : Y\times U \times P \to \R, \quad L(y,u,p) = J(y,u) + (A(y)-Bu)p.
\end{equation}
Obviously, $L$ depends on the triplet of \emph{independent variables} $(y,u,p)\in Y\times U\times P$. Then, at least formally, first-order optimality conditions can be derived by computing the partial derivatives of $L$ with respect to each of these variables and setting them to zero
\[
 0 \overset{!}=L'(y,u,p)=(\partial_y L(y,u,p),\partial_u L(y,u,p), \partial_p L(y,u,p)) : Y\times U \times P \to \R. 
\]
Then, the $3\times 3$ block operator of second partial derivatives yields the Newton linearization for solving the optimality system locally by the well-known Lagrange-Newton method. By the theorem of Schwarz, this operator is symmetric. 

Our aim is to extend this approach to a geometric optimal control problem \eqref{eq:OCP}
\[
   \min_{(y,u)\in \mathcal Y\times U} J(y,u) \text{ s.t. } \left(A(y) - B(y)u\right)v = 0_y \text{ for all } v\in E_y,
\]
where the state equation is a variational problem on a manifold.
Unlike in the linear case, the space of test functions $E_y$ now depends on the current state. Also the Lagrangian has to reflect that the adjoint state serves as a test function in $E_y$. Therefore, there will be a subtle \emph{coupling between the state and the adjoint state}. This is a fundamental difficulty that prevents the direct transfer of the formal Lagrange technique from the linear case to the manifold case.

Let us thus first consider the state and the adjoint state together as one combined variable $e\in \E$ with $y=\pi(e)\in \Y$. This yields the following definition.
\begin{definition}\label{def:Lagrangian}
    We define the \emph{Lagrangian function} corresponding to the optimal control problem \eqref{eq:OCP} as 
    \begin{align*}
        L : \E \times U &\to \R \\
            (e, u) &\mapsto J(\pi(e), u) + (A(\pi(e)) - B(\pi(e))u)(e).
    \end{align*}
    We call $\pi(e) = y$ \emph{state}, $e \in E_{\pi(e)} \subset \E$ \emph{adjoint state} and $u\in U$ \emph{control}. 
\end{definition}
This formulation exhibits a simple structure and reflects the relationship between the state and the adjoint state. However, with this definition it is difficult to derive optimality conditions, directly. In particular, this formulation only allows for a perturbation $\delta e \in T_e\E$ in the combined variable $e$, but not separately in $y$. Thus, the typical structure of optimality systems, consisting of a state equation, an adjoint equation and an optimality condition is lost.
To overcome this difficulty, we introduce a local splitting of $\E$ around a given state $\hat{y} \in \Y$ into a product $\Y \times E_{\hat{y}}$ by the use of a vector transport $\VTforward{y} \in \Gamma(\L(\Y\times E_{y},\E))$ with 
\[
    \NE{\VTforward{\hat{y}}} : \Y\times E_{\hat{y}} \to \E.
\]
According to \eqref{eq:localModel}, this yields a locally splitted version of the Lagrangian function:
\begin{align*}
    L_{\hat{y}} :=  L\circ (\NE{\VTforward{\hat{y}}},  \mathrm{id_U}) : \Y \times E_{\hat{y}} \times U&\to \R, \\
    ((y,p),u) &\mapsto L(\NE{\VTforward{\hat{y}}}(y,p),u) = J(y,u) + \left(A(y)-B(y)u\right)(\VTforward{\hat{y}}(y)p)
\end{align*}
with $L_{\hat{y}}(\hat{y}, p, u) = L(e,u)$ for $e = (\hat{y},p), \; p \in E_{\hat{y}}$.
Our strategy is to compute first and second derivatives of $L_{\hat{y}}$ with respect to the splitted variables and relate them to the derivatives of $L$.

If we differentiate $L_{\hat{y}}$ with respect to $(y, p, u)\in\Y\times E_{\hat{y}}\times U$, we obtain by the chain rule:
\begin{equation}\label{eq:LyDerivativeChainrule}
\begin{aligned}
    L_{\hat{y}}'(y, p, u) = L'(\NE{\VTforward{\hat{y}}}(y,p), u)(\NE{\VTforward{\hat{y}}}'(y,p), \mathrm{id}_U),
\end{aligned}
\end{equation}
which yields for all $(\phi_y, \phi_p, \phi_u) \in T_y\Y \times E_{\hat{y}} \times U$ the relation
\begin{equation*}
        \partial_y L_{\hat{y}}(y,p,u)\phi_y + \partial_p L_{\hat{y}}(y,p,u)\phi_p + \partial_u L_{\hat{y}}(y,p,u)\phi_u = \partial_e L(e, u)\phi_e + \partial_u L(e,u)\phi_u
\end{equation*}
for $e = \NE{\VTforward{\hat{y}}}(y,p)$ and $\phi_e = \NE{\VTforward{\hat{y}}}'(y,p)(\phi_y, \phi_p)$.
Obviously, the left-hand side yields an optimality system in the classical three component form, while in the right-hand side state and adjoint equation cannot be easily decoupled. 
To derive optimality conditions for \eqref{eq:OCP} we require as usual that $L'(e, u) = 0_{(e,u)}^*\in (T_e\E \times U)^*$. Since $\NE{\VTforward{\hat{y}}}'(y,p) \in L(T_y\Y\times E_{\hat{y}}, T_{e}\E)$ is continuously invertible  by Lemma~\ref{eq:VTprimeInverse},
this is equivalent to the three conditions 
\begin{align*}
    \partial_y L_{\hat{y}}(y,p,u) = 0_y^*, \quad \partial_p L_{\hat{y}}(y,p,u) = 0^*_{\hat{y}}, \quad \partial_u L_{\hat{y}}(y,p,u) = 0^*.
\end{align*}
Since $L_{\hat{y}}(\hat{y}, p, u) = L(e,u)$ for $e = (\hat{y},p)$, we thus obtain the optimality conditions by evaluating the partial derivatives at $(\hat{y},p,u)$.
\paragraph{The adjoint equation.}
For the differentiation of $L_{\hat{y}}$ with respect to $y\in\Y$ we observe that $A(\cdot) -B(\cdot)u : \Y\to \E^*$ maps into a dual vector bundle. Moreover, the local model $L_{\hat{y}}$ contains a section $v := \VTforward{\hat{y}}(\cdot)p \in \Gamma(\E)$, which is given by transporting $p \in E_{\hat{y}}$ to the neighboring fibres:
\[
    v : \Y \to \E \quad \text{with} \quad v(y) = \VTforward{\hat{y}}(y)p \in E_y.
\] 
Thus, we have to take covariant derivatives for their linearization. 
\begin{proposition}
Let $\VTforward{y} \in \Gamma(\L(\Y\times E_{y}, \E))$ be a vector forward transport on $\E$ and assume that $\E$ and $\E^*$ are equipped with its induced connections. Then, the partial derivative of the locally splitted Lagrangian $L_{\hat{y}} = L \circ (\NE{\VTforward{\hat{y}}}, \id_U) : \Y \times E_{\hat{y}}\times U \to\R$ with respect to $y$ in the direction $\phi_y \in T_y\Y$ is given by 
 \begin{equation}\label{eq:AdjointEquationAugmented}
    \begin{aligned}
    \partial_y L_{\hat{y}}(y,p,u)\phi_y = \partial_y J(y,u)\phi_y &+ \nabla^{\E^*}(A(\cdot) - B(\cdot)u)(y)\phi_y \; \VTforward{\hat{y}}(y)p \\
    &\quad + (A(y) - B(y)u)\big(\nabla^\E \big(\VTforward{\hat{y}}(\cdot)p\big)(y) \phi_y\big).
    \end{aligned}
\end{equation}
Its representation in trivializations is given by 
\begin{equation}\label{eq:adjointEquationTrivialization}
    \partial_y L_{\hat{y}}(y,p,u)\phi_y = \partial_y J(y,u)\phi_y + (A_\tau'(y) - B'_{\tau}(y)u)\phi_y\,\VTforward{\hat{y}, \tau}(y)p + (A_\tau(y)-B_\tau(y)u)(\VTprime{\hat{y},\tau}{\rightarrow}(y)\phi_y\, p_\tau).
\end{equation}
\end{proposition}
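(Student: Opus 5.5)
The plan is to write $L_{\hat y}(y,p,u)=J(y,u)+F(y)\big(v(y)\big)$ with the section $F:=A(\cdot)-B(\cdot)u\in\Gamma(\E^*)$ (for fixed $u$) and the section $v:=\VTforward{\hat y}(\cdot)p\in\Gamma(\E)$. The term $F(v):\Y\to\R$ is then a dual pairing of sections, and the $y$-derivative follows from the product rule \eqref{eq:defdualconnection} of Proposition~\ref{pro:PrimalDualConnectionVT}. The first step is to check that $F$ is a differentiable section of $\E^*$. This holds because $A$ is a $C^2$-section and $B$ is a $C^2$ fibrewise linear mapping, so $y\mapsto B(y)u\in E_y^*$ is a section with trivialization $B_\tau(y)u$. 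Similarly, $v$ is a section of $\E$, since $\VTforward{\hat y}$ is a differentiable fibrewise linear map and $p$ is fixed.

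Applying \eqref{eq:defdualconnection} with $\ell=F$ and $e=v$ gives
\[
\partial_y\big(F(v)\big)(y)\phi_y=\big(\nabla^{\E^*}F(y)\phi_y\big)v(y)+F(y)\big(\nabla^\E v(y)\phi_y\big).
\]
Here the connection on $\E$ is the one induced by fibrewise inverses of $\VTforward{}$, and the one on $\E^*$ by fibrewise adjoints, as the proposition assumes. Adding $\partial_yJ(y,u)\phi_y$ and substituting $v(y)=\VTforward{\hat y}(y)p$ yields \eqref{eq:AdjointEquationAugmented}.

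For the trivialized form \eqref{eq:adjointEquationTrivialization}, I would not expand the two covariant derivatives separately, since each contains transport-derivative correction terms. Instead I would compute directly. By \eqref{eq:leTriv}, $F(y)v(y)=F_\tau(y)\big(\VTforward{\hat y,\tau}(y)p_\tau\big)$, where $\VTforward{\hat y,\tau}(y)\in L(\mathbb E,\mathbb E)$ maps the trivialized $p$ at $\hat y$ to the trivialized fibre at $y$. The ordinary product rule in the Banach space $\mathbb E$, together with linearity of $F_\tau=A_\tau-B_\tau u$ in $u$, then gives
\[
(A_\tau'(y)-B_\tau'(y)u)\phi_y\,\VTforward{\hat y,\tau}(y)p_\tau+(A_\tau(y)-B_\tau(y)u)\big(\VTprime{\hat y,\tau}{\rightarrow}(y)\phi_y\,p_\tau\big),
\]
exactly as in \eqref{eq:FibrewiseLinMappingTrivializationDerivative}.

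As a consistency check, I would verify that the connection terms cancel between the two summands. By \eqref{eq:Qstartriveq}, the $\E^*$ term contributes $+F_\tau\circ(\VTprime{y,\tau}{\rightarrow}(y)\phi_y)v_\tau$. By Proposition~\ref{prop:computationPrimalConnection} and \eqref{eq:VTprimeForwardBackward}, the $\E$ term contributes the negative of the same expression. Their sum is therefore the plain derivative, which confirms the result.

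The main obstacle is bookkeeping. The transports at different base points ($\VTforward{\hat y}$ in the section $v$ and $\VTforward{y}$ defining the connection at $y$) must not be confused; this is why I expect to justify the trivialized formula by direct differentiation rather than through the connections. The trivialization notation $p_\tau$ must also be read consistently, as the representation in the fibre over $\hat y$ mapped by $\VTforward{\hat y,\tau}(y)$.
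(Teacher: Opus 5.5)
Your proposal is correct and follows the paper's proof: the first formula comes from the product rule of Proposition~\ref{pro:PrimalDualConnectionVT}. Your ``consistency check'' is precisely how the paper obtains the trivialized form, namely the cancellation of the $\pm$ transport-derivative terms from \eqref{eq:Qstartriveq} and \eqref{eq:VTprimeForwardBackward}, and your direct differentiation of $F_\tau(y)(\VTforward{\hat y,\tau}(y)p_\tau)$ is an equivalent shortcut. One small wording fix: in that direct computation, what you actually use is that $u$ is fixed and that the pairing $\mathbb E^*\times\mathbb E\to\R$ is bilinear, not that $F_\tau$ is linear in $u$.
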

\begin{proof}
Using the product rule for the dual pairing $(A(\cdot)-B(\cdot)u)(\VTforward{\hat{y}}(\cdot)p) :\Y \to \R$ according to Proposition~\ref{pro:PrimalDualConnectionVT}, we get for $\phi_y\in T_y\Y$:
\begin{equation}\label{eq:linearizedStateEq}
    (A(\cdot)-B(\cdot)u)(\VTforward{\hat{y}}(\cdot)p)'(y)\phi_y = \nabla^{\E^*}(A(\cdot) - B(\cdot)u)(y)\phi_y \; v(y) + (A(y) - B(y)u)\big(\nabla^\E v(y) \phi_y\big).
\end{equation}
This directly yields \eqref{eq:AdjointEquationAugmented}.
For a representation, we use a local trivialization $\tau$ on $\E$ and the corresponding trivialization on $\E^*$ and consider $A_\tau(\cdot), \; B_\tau(\cdot)u :\Y\to \mathbb{E}^* \text{ and } v_\tau(\cdot) : \Y \to \mathbb{E}.$
Then, using the representations of the covariant derivatives according to Definition~\ref{def:VTconnection}, \eqref{eq:VTprimeForwardBackward} and \eqref{eq:Qstartriveq} by
\[
    \nabla^{\E} v(y)\phi_y \sim v_\tau'(y)\delta y + \VTprime{y,\tau}{\leftarrow}(y)\phi_y \; v_\tau(y) = v_\tau'(y)\phi_y - \VTprime{{y,\tau}}{\rightarrow}(y)\phi_y \; v_\tau(y),
\] 
and
\[
    (\nabla^{\E^*} (A(\cdot) - B(\cdot)u)(y)\phi_y)e =(A_\tau'(y) - B_{\tau}'(y)u)\phi_y\, e_\tau +  (A_\tau(y)-B_\tau(y)u)(\VTprime{y,\tau}{\rightarrow}(y)\phi_y \; e_\tau)
\]
for $e\in E_y$, we can write a representation of \eqref{eq:linearizedStateEq} as 
\begin{equation}\label{eq:linStateEqTrivialization}
    \begin{aligned}
     &\nabla^{\E^*}(A(\cdot) - B(\cdot)u)(y)\phi_y \, v(y) + (A(y) - B(y)u)\nabla^{\E} v(y)\phi_y \\
     &\quad = (A_\tau'(y) - B'_{\tau}(y)u)\phi_y \, v_\tau(y) +  (A_\tau(y)-B_\tau(y)u)(\VTprime{y,\tau}{\rightarrow}(y)\phi_y \, v_\tau(y)) \\
     &\qquad + (A_\tau(y)-B_\tau(y)u)\left(v_\tau'(y)\phi_y - \VTprime{y,\tau}{\rightarrow}(y)\phi_y \, v_\tau(y)\right) \\
     &\quad = (A_\tau'(y) - B'_{\tau}(y)u)\phi_y\,v_\tau(y) + (A_\tau(y)-B_\tau(y)u)v_\tau'(y)\phi_y.
    \end{aligned}
\end{equation}
Since $v(y) = \VTforward{\hat{y}}(y)p$ we get $v_\tau(y) = \VTforward{\hat{y}, \tau}(y)p$ and $v_\tau'(y)\phi_y = \VTprime{\hat{y}, \tau}{\rightarrow}(y)\phi_y \,p_\tau$. Thus, we obtain the representation
\begin{equation*}
    \partial_y L_{\hat{y}}(y,p,u)\phi_y = \partial_y J(y,u)\phi_y + (A_\tau'(y) - B'_{\tau}(y)u)\phi_y\,\VTforward{\hat{y}, \tau}(y)p + (A_\tau(y)-B_\tau(y)u)(\VTprime{\hat{y},\tau}{\rightarrow}(y)\phi_y\, p_\tau).
\end{equation*}
\end{proof}
\begin{corollary}
    The \emph{adjoint equation} is given by 
    \begin{equation}\label{eq:adjointEquationSimple}
    \partial_y J(\hat{y},u)\phi_y + \left(\nabla^{\E^*}(A(\cdot) - B(\cdot)u)(\hat{y})\phi_y\right) p = 0 \quad \text{ for all } \phi_y \in T_{\hat{y}}\Y.
\end{equation}
\end{corollary}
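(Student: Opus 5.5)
The plan is to evaluate the formula \eqref{eq:AdjointEquationAugmented} of the preceding proposition at the base point $y=\hat{y}$. As discussed above, the optimality condition $\partial_y L_{\hat y}(y,p,u)=0^*_y$ only has to hold at $(\hat y,p,u)$. This is because $L_{\hat y}(\hat y,p,u)=L(e,u)$ for $e=(\hat y,p)$, and because $\NE{\VTforward{\hat y}}'(\hat y,p)$ is continuously invertible by Lemma~\ref{lem:InvVPrime}. So the adjoint equation is, by definition, $\partial_y L_{\hat y}(\hat y,p,u)\phi_y=0$ for all $\phi_y\in T_{\hat y}\Y$, and the task is to simplify the three terms of \eqref{eq:AdjointEquationAugmented} at $y=\hat y$.

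The first term, $\partial_y J(\hat y,u)\phi_y$, stays as it is. In the second term we use the defining property $\VTforward{\hat y}(\hat y)=Id_{E_{\hat y}}$ of a vector forward transport. Hence $\VTforward{\hat y}(\hat y)p=p$, and the second term becomes $\big(\nabla^{\E^*}(A(\cdot)-B(\cdot)u)(\hat y)\phi_y\big)p$.

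The key step is to show that the third term vanishes. The section $v(\eta)=\VTforward{\hat y}(\eta)p$ has exactly the form treated in the second part of Proposition~\ref{prop:computationPrimalConnection}, with the fixed vector $\phi=p\in E_{\hat y}$. The connection on $\E$ is induced by $\VTback{\hat y}$, taken as fibrewise inverses of $\VTforward{\hat y}$. Therefore $\nabla^\E v(\hat y)\phi_y=0$ for all $\phi_y$, and the term $(A(\hat y)-B(\hat y)u)\big(\nabla^\E v(\hat y)\phi_y\big)$ drops out.

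The main point to check is this consistency of connections. The connection used in Proposition~\ref{prop:computationPrimalConnection} at the point $\hat y$ must be the one induced by the same transport $\VTforward{\hat y}$, that is, the transport with base point $\hat y$ from the chosen family $\VTforward{y}$. This holds by the standing convention in Section~\ref{sec:OptimalityConditions}, together with the fact that we evaluate at $y=\hat y$.

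As a cross-check, I would also use the trivialized form \eqref{eq:adjointEquationTrivialization}. By Definition~\ref{def:VTconnection}, at $y=\hat y$ the term $(A_\tau-B_\tau u)(\VTprime{\hat y,\tau}{\rightarrow}(\hat y)\phi_y\,p_\tau)$ is exactly what the dual connection contributes to $\nabla^{\E^*}(A-Bu)(\hat y)\phi_y\,p$ through \eqref{eq:Qstartriveq}. So both representations agree and give \eqref{eq:adjointEquationSimple}.
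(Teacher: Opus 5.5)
Your proposal is correct and follows the paper's proof: evaluate \eqref{eq:AdjointEquationAugmented} at $y=\hat y$, use $\VTforward{\hat y}(\hat y)p=p$, and drop the third term because $\nabla^\E\big(\VTforward{\hat y}(\cdot)p\big)(\hat y)=0$ by Proposition~\ref{prop:computationPrimalConnection}. Your remark that the connection at $\hat y$ is induced by the same transport $\VTforward{\hat y}$, and your cross-check via \eqref{eq:adjointEquationTrivialization} and \eqref{eq:Qstartriveq}, are both correct; they make explicit what the paper leaves implicit.
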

\begin{proof}
Evaluating \eqref{eq:AdjointEquationAugmented} at $(y, p, u)=(\hat{y},p,u)$ and using that the covariant derivative $\nabla^\E \big(\VTforward{\hat{y}}(\cdot)p\big)(\hat{y})$ vanishes due to Prop.~\ref{prop:computationPrimalConnection}, we obtain the desired equation by setting the derivative $\partial_yL_{\hat{y}}(\hat{y}, p,u)$ to zero.

\end{proof}
\paragraph{The state equation.} We differentiate the locally splitted Lagrangian function $L_{\hat{y}}$ with respect to $p \in E_{\hat{y}}$ while keeping $y$ and $u$ fixed. Using the linearity of $A(y)$ and $B(y)u$ in the test functions, we obtain
\begin{equation}\label{eq:stateEquation}
    \partial_p L_{\hat{y}}(y,u,p)\phi_p = (A(y) - (B(y)u))(\VTforward{\hat{y}}(y)\phi_p) = 0 \text{ for all } \phi_p \in E_{\hat{y}}.
\end{equation}
\paragraph{The stationarity equation.} The stationarity equation equation can be derived by differentiating the Langrangian function with respect to $u \in U$. Using that $B\in \Gamma(\L(\Y \times U, \E^*))$ is a fibrewise linear mapping, i.e., $B(y) : U \to E_y^*$ is a linear mapping for fixed $y\in\Y$, we get
\begin{equation}\label{eq:stationarityEquation}
    \partial_u L_{\hat{y}}(y,u,p)\phi_u= \partial_u J(y,u) \phi_u - (B(y)\phi_u)(\VTforward{\hat{y}}(y)p) = 0 \text{ for all } \phi_u \in U.
\end{equation}
Evaluating the equations \eqref{eq:adjointEquationSimple}, \eqref{eq:stateEquation} and \eqref{eq:stationarityEquation} at $(y,p,u) = (\hat{y}, p, u)$ we obtain the optimality conditions for \eqref{eq:OCP} summarized in the following. 
\begin{innersummary}[Optimality conditions]
    The optimality conditions for our geometric optimal control problem
    \begin{equation*}
\begin{aligned}
    &\min_{(y,u)\in \Y\times U} J(y,u) \\
    & \hspace{0.4cm} \mathrm{s.t.} \; A(y) - B(y)u = 0_y^* \; \mathrm{in} \; E_y^*
\end{aligned}
\end{equation*}
are given by 
\begin{align}
    \partial_y J(y,u)\phi_y + \left(\nabla^{\E^*}(A(\cdot) - B(\cdot)u)(y)\phi_y \right)p &= 0 \quad \text{ for all } \phi_y \in T_y\Y\tag{adjoint equation}\label{eq:adjointEq}\\[0.2cm]
    \partial_u J(y,u) \phi_u - (B(y)\phi_u)p &= 0 \quad \text{ for all } \phi_u \in U\tag{stationarity equation}\label{eq:stationaryEq}\\[0.2cm]
    A(y)\phi_p - (B(y)u)\phi_p &= 0 \quad \text{ for all } \phi_p \in E_y\tag{state equation}\label{eq:stateEq}
\end{align}
\end{innersummary}
\begin{proposition}
    Assume that $(y_\star, u_\star) \in \Y \times U$ is a minimizer of \eqref{eq:OCP}. Assume that the linearized state equation operator
    \[
    \nabla^{\E^*}A(y_\star) : T_{y_\star}\Y \to E_{y_\star}^*
    \]
    is continuously invertible. Then there exists an adjoint state $p \in E_{y_\star}$ such that $(y_\star, p_\star, u_\star)$ is a solution of the optimality system.
\end{proposition}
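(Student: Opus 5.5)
Since $(y_\star,u_\star)$ is feasible, the state equation holds at $y_\star$ for every test function $\phi_p\in E_{y_\star}$. The other two equations only need an adjoint $p_\star$ satisfying the adjoint equation, and the stationarity equation should then follow from minimality. The plan is to reduce the problem locally to a classical equality-constrained problem and then apply the implicit function theorem.

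\textbf{Step 1: local reformulation.} Fix $\hat y=y_\star$ and work in a chart of $\Y$ around $y_\star$. Use the vector forward transport $\VTforward{y_\star}$ to define
\[
c:\Y\times U\to E_{y_\star}^*,\qquad c(y,u):=\VTstar{y_\star}(y)\big(A(y)-B(y)u\big)=\big(A(y)-B(y)u\big)\circ\VTforward{y_\star}(y).
\]
Because $\VTforward{y_\star}(y)$ is invertible, the constraint $A(y)-B(y)u=0_y^*$ is equivalent near $y_\star$ to $c(y,u)=0$, which takes values in the fixed Banach space $E_{y_\star}^*$. Thus $(y_\star,u_\star)$ is a local minimizer of $J$ subject to $c=0$.

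\textbf{Step 2: derivatives of $c$.} Apply Proposition~\ref{prop:computationDualConnection}, i.e.\ the moving test function technique. At $(y_\star,u_\star)$ it gives
\[
\partial_y c(y_\star,u_\star)\phi_y=\nabla^{\E^*}\big(A(\cdot)-B(\cdot)u_\star\big)(y_\star)\phi_y,
\qquad
\partial_u c(y_\star,u_\star)\phi_u=-B(y_\star)\phi_u.
\]
This requires the assumed invertibility of the $y$-derivative. The hypothesis is stated for $\nabla^{\E^*}A(y_\star)$; I would read it as the invertibility of the full linearized operator $\nabla^{\E^*}(A-Bu_\star)(y_\star)$, presumably meaning $B$ is independent of $y$ in the relevant sense or that this operator is what is meant. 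This interpretation is the delicate step, and I would make it explicit.

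\textbf{Step 3: implicit function theorem.} With $\partial_y c(y_\star,u_\star)$ continuously invertible, the implicit function theorem in charts gives a $C^1$ map $u\mapsto y(u)$ near $u_\star$ with $c(y(u),u)=0$. Its derivative is
\[
y'(u_\star)=-\partial_y c^{-1}\,\partial_u c .
\]
The reduced functional $j(u)=J(y(u),u)$ has a local minimum at $u_\star$, so $j'(u_\star)=0$, that is,
\[
\partial_u J-\partial_y J\,\partial_y c^{-1}\partial_u c=0 .
\]

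\textbf{Step 4: define the adjoint.} Set
\[
p_\star:=-\big(\partial_y c(y_\star,u_\star)^{-1}\big)^*\,\partial_y J(y_\star,u_\star)\in E_{y_\star}^{**}.
\]
There is a technical point here. The adjoint must lie in $E_{y_\star}$, not in the bidual. I would invoke reflexivity of the fibres, which the paper assumes implicitly in its linear analogue (reflexive $P$), to identify $E_{y_\star}^{**}$ with $E_{y_\star}$. Then $\partial_y J\phi_y+(\partial_y c\,\phi_y)p_\star=0$, which is exactly the adjoint equation~\eqref{eq:adjointEq} by Step 2.

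\textbf{Step 5: stationarity.} Substitute $p_\star$ into the condition $j'(u_\star)=0$ from Step 3. This gives $\partial_u J\phi_u-(B(y_\star)\phi_u)p_\star=0$, which is the stationarity equation. Together with the state equation from feasibility, $(y_\star,p_\star,u_\star)$ solves the optimality system.
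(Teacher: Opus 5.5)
Your proposal is correct and follows the same route as the paper's very terse proof: localize the constraint into a fixed Banach space, apply the implicit function theorem, and read off the adjoint from the reduced gradient. The only difference in localization is that you use $\VTstar{y_\star}$ where the paper uses a trivialization, which amounts to the same thing. The two caveats you flag are exactly what the paper's sketch leaves implicit. First, the hypothesis has to be read as invertibility of the full operator $\nabla^{\E^*}(A(\cdot)-B(\cdot)u_\star)(y_\star)$; unlike $\nabla^{\E^*}A(y_\star)$, this operator does not even depend on the chosen connection, since $A(y_\star)-B(y_\star)u_\star=0$. Second, reflexivity of $E_{y_\star}$ is needed to place $p_\star$ in $E_{y_\star}$ rather than in its bidual.
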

\begin{proof}
    Working in trivializations the proof proceeds along standard lines. The main step is the application of the inverse function theorem which requires the invertibility assumption (cf. \cite[4.F]{zeidler1986nonlinear} or \cite[Theorem I.5.2]{Lang}).
\end{proof}


\section{Second derivatives and Lagrange-Newton method}
In order to compute a zero of $L' : \E\times U \to T^*\E \times U^*$, and thus a solution of the optimality system, we apply Newton's method. 
Classically, in the case of \emph{linear spaces}, in the $k$-th iteration we would solve the Newton equation
\[
    L''(e_k, u_k)(\delta e_k, \delta u_k) + L'(e_k, u_k) = 0^*,
\]
and afterward compute a Newton step by adding the Newton direction $(\delta e_k, \delta u_k)$ to the current iterate $(e_k, u_k) \in \E \times U$. 

In a geometric setting we observe that the derivative of $L'$ at $(e_k, u_k)$ is a mapping 
\[
    L''(e_k, u_k) : T_{e_k}\E \times U \to T_{L'(e_k,u_k)}(T^*\E \times U^*),
\]
while $L'(e_k, u_k) \in T_{e_k}\E^* \times U^*$. Thus, $L''(e_k, u_k)(\delta e_k, \delta u_k)$ and $L'(e_k, u_k)$ do not fit together, and the classical Newton equation is not well-defined.  To resolve this issue, we apply Newton's method from \cite{weigl2025newton}. According to \cite[Alg. 1]{weigl2025newton} we have to find a connection $Q^{T^*\E}$ on the vector bundle $T^*\E$ which defines a covariant derivative of $L'$ according to Definition \ref{def:covariantDerivative}:
\[
    \nabla^{T^*\E\times U^*} L'(e_k,u_k) : T_{e_k}\E \times U \to T_{e_k}\E^* \times U^*
\]
and in turn a well-defined Newton equation in $T_{e_k}\E^* \times U^*$:
\[
    \nabla^{T^*\E\times U^*} L'(e_k,u_k)(\delta e_k, \delta u_k) + L'(e_k,u_k) = 0_{(e_k,u_k)}^* \in T_{e_k}\E^*\times U^*.
\] 
Using test functions this can equivalently be written as
\[
    \nabla^{T^*\E \times U^*} L'(e_k,u_k)(\delta e_k, \delta u_k)(\phi_e, \phi_u) + L'(e_k,u_k)(\phi_e, \phi_u) = 0 \quad \text{for all } (\phi_e, \phi_u) \in T_{e_k}\E \times U.
\] 
After solving this equation to compute the Newton direction $(\delta e_k, \delta u_k)$, we encounter a second issue: the Newton direction $(\delta e_k, \delta u_k)\in T_{e_k}\E \times U$ cannot be added to the iterate $(e_k,u_k) \in \E\times U$ since they live in different spaces. To solve this problem, we apply a retraction $R$ on $\E$ and an additive update on $U$ to obtain the Newton step
\[
    (e_{k+1}, u_{k+1}) = (R_{e_{k}}(\delta e_k), u_k +\delta u_k).
\]
However, this formulation has the drawback that $\delta e_k$ is a combination of update directions in $y$ and $p$, which is unclear to represent algorithmically. In particular, it is not obvious, how to the construct a (torsion-free) connection on $T\E^* \times U^*$ to define the Newton step and retraction on $T\E$ to perform an update. 

\subsection{Newton step in splitted form}
Our strategy to resolve these issues is to formulate an implementable Newton step in the splitted variables $(\delta y_k,\delta p_k)\in T_{y_k}\Y\times E_{y_k}$ with $y_k = \pi(e_k)$. To this end, we require the following algorithmic ingredients, to be provided by the user at each iterate $y_k\in \mathcal Y$:
\begin{enumerate}
[label=\roman*)]
 \item A $C^1$-retraction $R_{y_k}^\Y : T_{y_k}\Y \to \Y$ on the base manifold $\Y$.
 \item A $C^1$-vector forward-transport $\WTforward{y_k} \in \Gamma(\L(\Y\times T_{y_k}\Y, T\Y))$ on $T\Y$ giving rise to a torsion-free connection. 
 \item A $C^2$-vector forward-transport $\VTforward{y_k} \in \Gamma(\L(\Y\times E_{y_k}, \E))$ on $\E$. 
\end{enumerate}
For many known manifolds these mappings are readily available and, e.g., in the library \texttt{Manifolds.jl} \cite{AxenBaranBergmannRzecki:2023} even some implementions can be found. Derivatives can usually be computed straightforwardly by hand or by automatic differentiation in forward mode. We will discuss some examples in the section on numerical results, below. \\
\;\\
Our idea is again based on using the local splitting of $\E$ into $\Y\times E_{\hat{y}}$ via the vector forward transport $\VTforward{\hat{y}} \in \Gamma(\L(\Y\times E_{\hat{y}},\E))$ on $\E$. Here, we choose $\hat{y}$ as the current state, i.e., $\hat{y} = y_k$.
This yields the locally splitted Lagrangian $L_{y_k} : \Y\times E_{y_k}\times U \to \R$ of $L$ with its derivative 
\[
L_{y_k}' : \Y\times E_{y_k}\times U \to T^*\Y \times E_{y_k}^* \times U^*.
\]
A splitted update direction $(\delta y_k,\delta p_k, \delta u_k)\in T_{y_k}\Y\times E_{y_k}\times U$ for the current iterate $(y_k,p_k,u_k) \in \Y\times E_{y_k}\times U$ is then computed by solving the equation 
\[
    \nabla^{T^*\Y \times E_{y_k}^*\times U^*} L_{y_k}'(y_k, p_k, u_k)(\delta y_k, \delta p_k, u_k) + L_{y_k}'(y_k, p_k, u_k) = 0_{(y_k,y_k,u_k)}^*.
\]  
Since $L_{y_k}'$ is also a mapping into a vector bundle, namely the dual vector bundle $T^*\Y \times E_{y_k}^* \times U^*$, a covariant derivative of $L'_{y_k}$ is required here, which can be computed via the moving test function technique according to Proposition \ref{prop:computationDualConnection}. This requires a vector forward transport $\WWTforward{y_k}$ on the primal vector bundle $T\Y\times E_{y_k} \times U$, defined simply by 
\begin{equation}\label{eq:VTTYEU}
    \WWTforward{y_k}(y,p,u)(\phi_{y_k}, \phi_{p_k}, \phi_{u_k}) := (\WTforward{y_k}(y)\phi_{y_k}, \phi_{p_k}, \phi_{u_k}).
\end{equation}
Then, we compute the covariant derivative of $L_{y_k}'$ by transporting the test function $(\phi_{y_k}, \phi_{p_k}, \phi_{u_k}) \in T_{y_k}\Y \times E_{y_k} \times U$ and differentiating afterwards, i.e.,
\begin{align*}
   &\left( \nabla^{T^*\Y \times E_{y_k}^*\times U^*} L_{y_k}'(y_k, p_k, u_k)(\delta y_k, \delta p_k, u_k)\right)(\phi_{y_k}, \phi_{p_k}, \phi_{u_k}) \\
   & \quad = \frac{d}{d(y,p,u)}\left(L'_{y_k}(y,p,u)\big(\WWTforward{y_k}(y,p,u)(\phi_{y_k}, \phi_{p_k}, \phi_{u_k})\big)\right)\big\vert_{(y,p,u) = (y_k, p_k, u_k)}(\delta y_k, \delta p_k, \delta u_k).
\end{align*}
\begin{remark}
By assumption, the covariant derivative, induced by $\WTforward{y_k}(y)$ is torsion-free, and the above covariant derivative on the product space inherits this property. Since $L_{\hat{y}} : \Y\times E_{\hat{y}}\times U \to \R$ is a $C^2$-function from a manifold into the real numbers, its second derivative $\nabla^{T^*\Y \times E_{y_k}^*\times U^*} L_{y_k}'(y_k, p_k, u_k)$ is symmetric (cf., e.g., \cite[XIII §1]{Lang}). This structural property is beneficial in many ways, when constructing numerical algorithms.
\end{remark}


After computing the direction $(\delta y_k, \delta p_k, \delta u_k)$, we obtain the next iterate by setting 
\begin{equation}\label{eq:NoRetraction}  
    (y_{k+1}, p_{k+1}, u_{k+1}) := (R_{y_k}^\Y(\delta y_k), \VTforward{y_k}(R^\Y_{y_k}(\delta y_k))(p_k + \delta p_k), u_k+\delta u_k), 
\end{equation}
where $R_{y_k}^\Y : T_{y_k}\Y \to \Y$ is a retraction on $\Y$. This yields Algorithm~\ref{alg:ImplementableNewton}.
\begin{algorithm}
    \caption{Lagrange-Newton Method in Splitted Form}\label{alg:ImplementableNewton}
\begin{algorithmic}
\REQUIRE{$(y_0, p_0, u_0) \in \Y \times E_{y_0} \times U$ (initial guess); $\VTforward{y}$ (vector forward transport on $\E$); $\WTforward{y}$ (vector forward transport on $T\Y$), $R_y^\Y$ (retraction on $\Y$)}
\FOR {$k=0,1,2, \dots$}
\STATE build the locally splitted Lagrangian $L_{y_k} = L \circ (\NE{\VTforward{y_k}}, \id_U)$
\STATE $(\delta y_k, \delta p_k, \delta u_k) \leftarrow \text{ solve } \nabla^{T^*\Y \times E_{y_k}^*\times U^*} L_{y_k}'(y_k, p_k, u_k)(\delta y_k, \delta p_k, u_k) + L_{y_k}'(y_k, p_k, u_k) = 0_{(y_k,y_k,u_k)}^*$
\STATE $(y_{k+1}, p_{k+1}, u_{k+1}) = (R_{y_k}^\Y(\delta y_k), \VTforward{y_k}(R^\Y_{y_k}(\delta y_k))(p_k + \delta p_k), u_k+\delta u_k)$
\ENDFOR
\end{algorithmic}
\end{algorithm}

\subsection{Analysis of the Splitted Lagrange Newton method}
At this point it is not obvious that the resulting sequence of iterates corresponds to a true Newton iteration applied to the problem $L'(e,u) = 0_{(e,u)}^*$. First of all, the splitted Lagrangian function $L_{y_k}$ depends on the current iterate. Thus, seemingly, the problem to be solved, changes in each iteration step. Second, the mapping \eqref{eq:NoRetraction} is not a retraction in general as can be readly verified (cf. \eqref{eq:derPseudoRet}, below). 

In the following we show that our iterative step in $(\delta y_k,\delta p_k)$ is \emph{equivalent} to a Newton update, from a Newton direction $\delta e_k$, computed on $T_{e_k}\E$ if the connection on $T^*\E$ and the retraction on $\E$ are chosen suitably. However, neither of these quantities are needed explicitely for the implementation. 

To proceed, let us fix an iterate $\hat y = y_k$ and consider $L:\E\times U \to \R$ and its locally splitted version $L_{\hat{y}} = L \circ (\NE{\VTforward{\hat{y}}}, \id_U) : \Y\times E_{\hat{y}}\times U \to \R$ with their derivatives 
\[
        L' : \E\to T^*\E \quad \text{ and }\quad L_{\hat{y}}' : \Y\times E_{\hat{y}}\times U \to T^*\Y \times E_{\hat{y}}^* \times U^*.
\] 
As we have seen before, the relation $ L_{\hat{y}}'(y, p, u) = L'(\NE{\VTforward{\hat{y}}}(y,p), u)(\NE{\VTforward{\hat{y}}}'(y,p), \mathrm{id}_U)$ for the first derivatives holds. In order to derive a similar relationship for the second covariant derivatives, we first introduce a vector forward transport on $T\E$, which, according to the moving test function technique from Proposition \ref{prop:computationDualConnection}, allows us to compute a covariant derivative of $L'$. To this end, we use the vector forward transport $\VTforward{\hat{y}}$ from the local splitting and a vector forward transport on $T\Y$.
\begin{proposition}\label{prop:VTTE}
    Let $\WTforward{\hat{y}} \in \Gamma(\L(\Y\times T_{\hat{y}}\Y, T\Y))$ be a vector forward transport on $T\Y$ and $\VTforward{\hat{y}} \in \Gamma(\L(\Y\times E_{\hat{y}}, \E))$ be a vector forward transport on $\E$. Then, for $\hat{e}\in \E$
    \begin{equation}\label{eq:VTTE}
        \VVTforward{\hat{e}} : \E \times U \to \L((\E \times T_{\hat{e}}\E)\times U, T\E\times U), \quad \VVTforward{\hat{e}}(e, u)(\phi_e,\phi_u) := \left(\NE{\VTforward{\hat{y}}}'(y,p)(\WTforward{\hat{y}}(y)\phi_y,\phi_p),\phi_u\right),        
    \end{equation}
    where $e= \NE{\VTforward{\hat{y}}}(y,p)$ and $\phi_e = \NE{\VTforward{\hat{y}}}'(\hat{y}, p)(\phi_y, \phi_p)$ for $(\phi_y, \phi_p,\phi_u) \in T_{\hat{y}}\Y\times E_{\hat{y}} \times U$, is a vector forward transport on $T\E\times U$, and it holds 
    \[
        L_{\hat{y}}'(y, p, u)\WWTforward{\hat{y}}(y,p,u)(\phi_{y}, \phi_p, \phi_u) = L'(e,u)\VVTforward{\hat{e}}(e,u)(\phi_e, \phi_u).
    \]
\end{proposition}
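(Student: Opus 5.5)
Two things have to be shown: that $\VVTforward{\hat{e}}$ is a vector forward transport on $T\E\times U$, and the identity relating $L_{\hat y}'$ and $L'$.

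\textbf{Fibre well-definedness and linearity.} For $e\in\E$ near $\hat e$, Lemma~\ref{lem:InvVPrime} together with the local inverse $\NE{\VTback{\hat y}}$ gives a unique pair $(y,p)=\NE{\VTback{\hat y}}(e)\in\Y\times E_{\hat y}$ with $e=\NE{\VTforward{\hat y}}(y,p)$. So the right-hand side of \eqref{eq:VTTE} is well defined. It lies in $T_e\E\times U$, because $\NE{\VTforward{\hat y}}'(y,p)$ maps $T_y\Y\times E_{\hat y}$ into $T_e\E$ and $\WTforward{\hat y}(y)\phi_y\in T_y\Y$. The input $\phi_e\in T_{\hat e}\E$ is converted to $(\phi_y,\phi_p)$ by the isomorphism $\NE{\VTforward{\hat y}}'(\hat y,\hat p)^{-1}$ from Lemma~\ref{lem:InvVPrime}, where $\hat e=(\hat y,\hat p)$. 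Hence $\VVTforward{\hat e}(e,u)$ is the composition
\[
\NE{\VTforward{\hat y}}'(y,p)\circ\big(\WTforward{\hat y}(y)\times\mathrm{id}\big)\circ\NE{\VTforward{\hat y}}'(\hat y,\hat p)^{-1},
\]
paired with $\mathrm{id}_U$, and is therefore linear.

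\textbf{Continuous invertibility.} Each factor in this composition is continuously invertible: the outer two by Lemma~\ref{lem:InvVPrime}, applied at $(y,p)$ after shrinking the neighbourhood (invertibility is an open condition), and the middle one because $\WTforward{\hat y}(y)$ is invertible.

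\textbf{Identity at $\hat e$.} At $e=\hat e$ we have $(y,p)=(\hat y,\hat p)$ and $\WTforward{\hat y}(\hat y)=\mathrm{id}$. The two outer factors then cancel, so $\VVTforward{\hat e}(\hat e,u)=\mathrm{id}$.

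\textbf{Differentiability.} In trivializations the composition is $C^1$, since $\NE{\VTforward{\hat y}}$ is $C^2$ and $\WTforward{\hat y}$ is $C^1$. This is the step I expect to need the most care: the $y$-derivative of $\NE{\VTforward{\hat y}}'$ involves second derivatives of $\VTforward{\hat y}$, which is exactly why the $C^2$ assumption on $\VTforward{\hat y}$ is needed. It is also essential to check that the dependence on $e$ through $(y,p)=\NE{\VTback{\hat y}}(e)$ is differentiable.

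\textbf{The identity for the derivatives.} Apply the chain rule relation \eqref{eq:LyDerivativeChainrule} to the vector $\WWTforward{\hat y}(y,p,u)(\phi_y,\phi_p,\phi_u)$. By \eqref{eq:VTTYEU} this vector equals $(\WTforward{\hat y}(y)\phi_y,\phi_p,\phi_u)$. Hence
\[
L_{\hat y}'(y,p,u)\WWTforward{\hat y}(y,p,u)(\phi_y,\phi_p,\phi_u)=L'(e,u)\Big(\NE{\VTforward{\hat y}}'(y,p)\big(\WTforward{\hat y}(y)\phi_y,\phi_p\big),\phi_u\Big),
\]
and by definition \eqref{eq:VTTE} the right-hand side is exactly $L'(e,u)\VVTforward{\hat e}(e,u)(\phi_e,\phi_u)$. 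This last step is immediate once the definition is unpacked.
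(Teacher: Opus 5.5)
Your proposal is correct and follows the paper's proof. As in the paper, you check that $\VVTforward{\hat e}(e,u)$ maps into $T_e\E\times U$ and reduces to the identity at $e=\hat e$, then apply the chain-rule relation \eqref{eq:LyDerivativeChainrule} to the vector $(\WTforward{\hat y}(y)\phi_y,\phi_p,\phi_u)$.

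The differences are minor. You get invertibility at every $e$ by factoring $\VVTforward{\hat e}(e,u)$ into invertible maps, whereas the paper argues by continuity and openness of invertibility near $(\hat e,u)$. You also handle well-definedness via $\NE{\VTback{\hat y}}$ and differentiability (using the $C^2$ assumption on $\VTforward{\hat y}$) explicitly, which the paper leaves implicit.
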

\begin{proof}
    Let $\phi_e \in T_{\hat{e}}\E$ with $\phi_e = \NE{\VTforward{\hat{y}}}'(\hat{y}, p)(\phi_y, \phi_p)$ for $(\phi_y, \phi_p)\in T_{\hat{y}}\Y\times E_{\hat{y}}$ and  $\hat{e} = \NE{\VTforward{\hat{y}}}(\hat{y}, p)$. Let $\phi_u\in U$. 
  Then for $e = \NE{\VTforward{\hat{y}}}(y,p) \in \E$ and $u\in U$ we get 
    \begin{equation}\label{eq:VTTEcodomain}
        \VVTforward{\hat{e}}(e,u)(\phi_e, \phi_u)  = (\NE{\VTforward{\hat{y}}}'(y,p)(\WTforward{\hat{y}}(y)\phi_y, \phi_p), \phi_u) \in T_{\NE{\VTforward{\hat{y}}}(y,p)}\E\times U = T_e\E\times U.
    \end{equation}  
    In particular,
    \begin{align*}
        \VVTforward{\hat{e}}(\hat{e},u)(\phi_e, \phi_u) &=
        (\NE{\VTforward{\hat{y}}}'(\hat{y},p)(\WTforward{\hat{y}}(\hat{y})\phi_y,\phi_p), \phi_u)=
        (\NE{\VTforward{\hat{y}}}'(\hat{y},p)(\phi_y,\phi_p),\phi_u)=(\phi_e,\phi_u).
%
%
    \end{align*}
    Hence, $\VVTforward{\hat{e}}(\hat{e},u) = \id_{T_{\hat{e}}\E\times U}$. 
    By continuity of $(e,u)\mapsto \VVTforward{\hat{e}}(e,u)$ and invertibility of $\VVTforward{\hat{e}}(\hat{e},u) = \id_{T_{\hat{e}}\E\times U}$, invertibility persists in a neighbourhood of $(\hat{e},u)$, so $\VVTforward{\hat{e}}(e,u)$ is locally invertible. Hence, $\VVTforward{\hat{e}}$ is a vector forward transport on $T\E\times U$. 
    Using the relation $L_{\hat{y}}'(y, p, u) = L'(\NE{\VTforward{\hat{y}}}(y,p), u)(\NE{\VTforward{\hat{y}}}'(y,p), \mathrm{id}_U)$ and \eqref{eq:VTTEcodomain} we get for $(\phi_y, \phi_p, \phi_u) \in T_{\hat{y}}\Y \times E_{\hat{y}}\times U$, $e= \NE{\VTforward{\hat{y}}}(y,p) \in \E$ and $\phi_e = \NE{\VTforward{\hat{y}}}'(\hat{y}, p)(\phi_y, \phi_p)\in T_{\hat{e}}\E$
    \begin{align*}
        L_{\hat{y}}'(y, p, u)\WWTforward{\hat{y}}(y,u,p)(\phi_y,\phi_p, \phi_u) &= L_{\hat{y}}'(y, p, u)(\WTforward{\hat{y}}(y)\phi_{y}, \phi_p, \phi_u) \\
        &= L'(\NE{\VTforward{\hat{y}}}(y,p), u)(\NE{\VTforward{\hat{y}}}'(y,p)(\WTforward{\hat{y}}(y)\phi_y, \phi_p), \phi_u)\\
        &= L'(e, u)\VVTforward{\hat{e}}(e,u)(\phi_e, \phi_u).
    \end{align*}
\end{proof}
Using the vector forward transport $\VVTforward{\hat{e}}$ on $T\E \times U$ and the vector forward transport \eqref{eq:VTTYEU} on $T\Y\times E_{\hat{y}}\times U$, and their induced connections on $T^*\E \times U^*$ and $T^*\Y\times E_{\hat{y}}^*\times U^*$, we obtain the following relation for the covariant derivatives of $L_{\hat{y}}'$ and $L'$. 
\begin{proposition}\label{prop:RelationSecondDerivatives} Let $\WTforward{\hat{y}} \in \Gamma(\L(\Y\times T_{\hat{y}}\Y, T\Y))$ be a vector forward transport on $T\Y$ and assume that the connection on $T^*\Y\times E_{\hat{y}}^* \times U^*$ is induced by the corresponding vector transport \eqref{eq:VTTYEU}. If the connection on $T^*\E \times U^*$ is induced by the vector forward transport $\VVTforward{\hat{e}}$, then we obtain for $\hat{e}=  (\hat{y}, p)\in \Y\times E_{\hat{y}}$ and $u\in U$ that
    \[
        \nabla^{T^*\Y\times E_{\hat{y}}^*\times U^*} L_{\hat{y}}'(\hat{y}, p, u)(\delta y, \delta p, \delta u)(\phi_y, \phi_p,\phi_u) = \nabla^{T^*\E\times U^*} L'(\hat{e},u)(\delta e, \delta u)(\phi_e, \phi_u),
    \]
    for all $\phi_e = \NE{\VTforward{\hat{y}}}'(\hat{y}, p)(\phi_y, \phi_p)$, $\delta e = \NE{\VTforward{\hat{y}}}'(\hat{y}, p)(\delta y, \delta p)$ for $(\phi_y, \phi_p,\phi_u), (\delta y, \delta p,\delta u) \in T_{\hat{y}}\Y\times E_{\hat{y}}\times U$.
\end{proposition}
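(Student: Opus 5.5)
The plan is to apply the moving test function technique (Proposition~\ref{prop:computationDualConnection}) on both sides and reduce the claim to the identity of first derivatives proved in Proposition~\ref{prop:VTTE}. By that proposition, for the dual connection induced by $\VVTforward{\hat{e}}$ on $T^*\E\times U^*$, the covariant derivative of the section $L'\in\Gamma(T^*\E\times U^*)$ at $(\hat e,u)$ is
\[
\nabla^{T^*\E\times U^*} L'(\hat{e},u)(\delta e,\delta u)(\phi_e,\phi_u)=\frac{d}{d(e,\tilde u)}\Big(L'(e,\tilde u)\,\VVTforward{\hat{e}}(e,\tilde u)(\phi_e,\phi_u)\Big)\Big|_{(e,\tilde u)=(\hat e,u)}(\delta e,\delta u).
\]
Analogously, with the transport \eqref{eq:VTTYEU}, the covariant derivative of $L_{\hat y}'$ at $(\hat y,p,u)$ is the derivative of $(y,\tilde p,\tilde u)\mapsto L_{\hat y}'(y,\tilde p,\tilde u)\WWTforward{\hat y}(y,\tilde p,\tilde u)(\phi_y,\phi_p,\phi_u)$ in direction $(\delta y,\delta p,\delta u)$.

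The key step is to note that the scalar functions being differentiated coincide after composition with the splitting. Define $g(e,\tilde u):=L'(e,\tilde u)\VVTforward{\hat{e}}(e,\tilde u)(\phi_e,\phi_u)$ on a neighbourhood of $(\hat e,u)$ in $\E\times U$. By the last identity of Proposition~\ref{prop:VTTE}, with $e=\NE{\VTforward{\hat y}}(y,\tilde p)$,
\[
L_{\hat{y}}'(y,\tilde p,\tilde u)\WWTforward{\hat{y}}(y,\tilde p,\tilde u)(\phi_y,\phi_p,\phi_u)=g\big(\NE{\VTforward{\hat y}}(y,\tilde p),\tilde u\big).
\]
One point needs care here. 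In \eqref{eq:VTTE} the transported vector is defined through the fixed preimage $(\phi_y,\phi_p)$ of $\phi_e$ together with the moving point $(y,\tilde p)$. I will check that this is well defined as a fibrewise linear map of $\phi_e$. It is, because $\NE{\VTforward{\hat y}}'(\hat y,p)$ is invertible by Lemma~\ref{lem:InvVPrime}, so $(\phi_y,\phi_p)$ is determined by $\phi_e$, and $(y,\tilde p)$ is determined locally by $e$ via the inverse splitting. Thus the left-hand side equals $g\circ(\NE{\VTforward{\hat y}},\id_U)$ as functions of $(y,\tilde p,\tilde u)$.

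Next I differentiate both sides at $(\hat y,p,u)$ in direction $(\delta y,\delta p,\delta u)$. The chain rule gives
\[
g'(\hat e,u)\big(\NE{\VTforward{\hat y}}'(\hat y,p)(\delta y,\delta p),\delta u\big)=g'(\hat e,u)(\delta e,\delta u),
\]
which is exactly $\nabla^{T^*\E\times U^*}L'(\hat e,u)(\delta e,\delta u)(\phi_e,\phi_u)$. The left side is $\nabla^{T^*\Y\times E_{\hat y}^*\times U^*}L_{\hat y}'(\hat y,p,u)(\delta y,\delta p,\delta u)(\phi_y,\phi_p,\phi_u)$, which proves the claim.

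I expect the main obstacle to be the regularity and bookkeeping rather than the computation itself. Differentiating $g$ requires $\VVTforward{\hat e}$ to be differentiable, which involves $\NE{\VTforward{\hat y}}''$. This is covered by the assumption that $\VTforward{\hat y}$ is $C^2$ and $\WTforward{\hat y}$ is $C^1$. It must also be made precise that Proposition~\ref{prop:computationDualConnection} applies to the dual of $T\E\times U$ with the transport $\VVTforward{\hat e}$ based at $\hat e$, and that everything holds only locally near $\hat e$, which suffices because covariant derivatives are local.
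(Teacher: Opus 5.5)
Your proposal is correct and follows the paper's proof essentially step for step. Like the paper, you write both covariant derivatives via the moving test function formula of Proposition~\ref{prop:computationDualConnection}, identify the two scalar functions through the identity of Proposition~\ref{prop:VTTE} composed with the splitting $\NE{\VTforward{\hat y}}$, and conclude by the chain rule. Your extra remarks make explicit two points the paper leaves implicit: that $\VVTforward{\hat e}$ is well defined as a fibrewise linear map via Lemma~\ref{lem:InvVPrime}, and that the $C^2$-regularity of $\VTforward{\hat y}$ is needed for its differentiability.
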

\begin{proof}
    Let $\hat{e} \in \E$ with $\pi(\hat{e})=\hat{y}$. 
    Due to the moving test function technique from Proposition \ref{prop:computationDualConnection} the covariant derivative $\nabla^{T^*\Y\times E_{\hat{y}}\times U} L_{\hat{y}}'(\hat{y}, p, u)$ is given by 
    \begin{align*}
   &\left( \nabla^{T^*\Y \times E_{\hat{y}}^*\times U^*} L_{\hat{y}}'(\hat{y}, p, u)(\delta y, \delta p, u)\right)(\phi_{y}, \phi_{p}, \phi_{u}) \\
   & \quad = \frac{d}{d(y,p,u)}\left(L_{\hat{y}}(y,p,u)\big(\WWTforward{\hat{y}}(y,p,u)(\phi_{y}, \phi_{p}, \phi_{u})\big)\right)\big\vert_{(y,p,u) = (\hat{y}, p, u)}(\delta y, \delta p, \delta u).
\end{align*}
    Using Proposition \ref{prop:VTTE} and the chain rule we obtain 
    \begin{align*}
        &\frac{d}{d(y,p,u)}\left(L_{\hat{y}}(y,p,u)\big(\WWTforward{\hat{y}}(y,p,u)(\phi_{y}, \phi_{p}, \phi_{u})\big)\right)\big\vert_{(y,p,u) = (\hat{y}, p, u)}(\delta y, \delta p, \delta u) \\
        &\quad =\frac{d}{d(e,u)}\left(L'(e, u)(\VVTforward{\hat{e}}(e,u)(\phi_e, \phi_u))\right)\big\vert_{(e,u) = (\hat{e}, u)}(\delta e, \delta u)
    \end{align*}
    for $e=\NE{\VTforward{\hat{y}}}(y,p)$,  $\phi_e = \NE{\VTforward{\hat{y}}}'(\hat{y},p)(\phi_y, \phi_p)$ and $\delta e = \NE{\VTforward{\hat{y}}}'(\hat{y}, p)(\delta y, \delta p)$. According to Proposition~\ref{prop:computationDualConnection}, the second line coincides with the covariant derivative $\nabla^{T^*\E}L'(\hat{e}, u)(\delta e, \delta u)(\phi_e, \phi_u)$ if the connection on $T^*\E$ is defined via the vector transport $\VVTforward{\hat{e}}$.
\end{proof}
Proposition~\ref{prop:RelationSecondDerivatives} will yield the equivalence between the computation of the Newton direction $(\delta e_k, \delta u_k)$ by solving the Newton equation 
\[
    \nabla^{T^*\E \times U^*}L'(e_k, u_k)(\delta e_k, \delta u_k) + L'(e_k, u_k) = 0_{(e_k,u_k)}^*
\]
and the computation of the direction $(\delta y_k, \delta p_k, \delta u_k)$ according to Algorithm~\ref{alg:ImplementableNewton} by solving
\[        
    \nabla^{T^*\Y \times E_{y_k}^*\times U^*} L_{y_k}'(y_k, p_k, u_k)(\delta y_k, \delta p_k, \delta u_k) + L_{y_k}'(y_k, p_k, u_k) = 0_{(y_k,y_k,u_k)}^*.
\]
Furthermore, for the equivalence of the steps, we require a retraction on $\E$. 
\begin{proposition} 
    Let $R^\Y : T\Y \to \Y$ be a retraction on $\Y$.  Let $\VTforward{\hat{y}} \in \Gamma(\L(\Y\times E_{\hat{y}}, \E))$ be a vector forward transport on $\E$. Then, the following mapping defines a retraction on $\E$:
    \begin{equation}\label{eq:retrationE}
        R: T\E \to \E, \quad R_{\hat{e}}(\delta e) := \VTforward{\hat{y}}(R^\Y_{\hat{y}}(\delta y))(p+\delta p)
    \end{equation}
    with $\hat{e} = (\hat{y},p) \in \E$ and $\delta e = \NE{\VTforward{\hat{y}}}'(\hat{y},p)(\delta y, \delta p) \in T_{\hat{e}}\E$ for $(\delta y, \delta p)\in T_{\hat{y}}\Y\times E_{\hat{y}}$.
\end{proposition}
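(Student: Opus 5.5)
We must show three things. First, $R_{\hat e}$ is well defined. Second, $R_{\hat e}(0_{\hat e})=\hat e$. Third, $R_{\hat e}'(0_{\hat e})=\mathrm{id}_{T_{\hat e}\E}$.

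\textbf{Well-definedness.} By Lemma~\ref{lem:InvVPrime}, $\NE{\VTforward{\hat y}}'(\hat y,p)$ is an isomorphism $T_{\hat y}\Y\times E_{\hat y}\to T_{\hat e}\E$. Hence every $\delta e\in T_{\hat e}\E$ has a unique splitting $(\delta y,\delta p)$, and we may write
\[
R_{\hat e}=\NE{\VTforward{\hat y}}\circ\big(R^\Y_{\hat y}\times (p+\cdot)\big)\circ\big(\NE{\VTforward{\hat y}}'(\hat y,p)\big)^{-1}.
\]
The result lies in $E_{R^\Y_{\hat y}(\delta y)}\subset\E$, since $\VTforward{\hat y}(y)$ maps $E_{\hat y}$ into $E_y$. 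Smoothness ($C^1$) follows from the composition, using that $\VTforward{\hat y}$ is $C^2$ and $R^\Y$ is $C^1$. Joint regularity in $\hat e$ requires $\VTforward{\hat y}$ to depend regularly on $\hat y$, which we take as part of the standing assumptions; the essential content is the pointwise retraction property at fixed $\hat e$.

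\textbf{Zero property.} At $\delta e=0$ the splitting is $(\delta y,\delta p)=(0,0)$. Then $R^\Y_{\hat y}(0)=\hat y$ and $\VTforward{\hat y}(\hat y)=\mathrm{Id}_{E_{\hat y}}$, so $R_{\hat e}(0_{\hat e})=p=\hat e$.

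\textbf{Derivative property.} This is the main step. Differentiate the composition above at $\delta e=0$ using the chain rule. The derivative of the middle map $(\delta y,\delta p)\mapsto(R^\Y_{\hat y}(\delta y),p+\delta p)$ at $(0,0)$ is $(R^{\Y\prime}_{\hat y}(0)\delta y,\delta p)=(\delta y,\delta p)$, because $R^\Y$ is a retraction. This map sends $(0,0)$ to $(\hat y,p)$. Therefore
\[
R_{\hat e}'(0_{\hat e})\delta e=\NE{\VTforward{\hat y}}'(\hat y,p)(\delta y,\delta p)=\delta e .
\]

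The only subtlety is identifying the tangent space of the linear space $T_{\hat e}\E$ at $0$ with $T_{\hat e}\E$ itself, and similarly for $E_{\hat y}$. The inverse map $\big(\NE{\VTforward{\hat y}}'(\hat y,p)\big)^{-1}$ is linear, so its derivative is itself. Its composition with $\NE{\VTforward{\hat y}}'(\hat y,p)$ then cancels to the identity.

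It may be instructive to cross-check this in trivializations via \eqref{eq:FibrewiseLinMappingTrivializationDerivative}. The fibre component of the derivative is $\delta p_\tau+(\VTprime{\hat y,\tau}{\rightarrow}(\hat y)\delta y)p_\tau$. This coincides with the fibre representation of $\delta e$, confirming the result.
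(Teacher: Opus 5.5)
Your proof is correct and takes essentially the same route as the paper: both arguments split $\delta e$ via the invertible map $\NE{\VTforward{\hat y}}'(\hat y,p)$ from Lemma~\ref{lem:InvVPrime}, use $R^\Y_{\hat y}(0_{\hat y})=\hat y$ and $\VTforward{\hat y}(\hat y)=\mathrm{Id}_{E_{\hat y}}$ for the zero property, and apply the chain rule for the derivative property. The only cosmetic difference is that the paper differentiates $R_{\hat e}\circ\NE{\VTforward{\hat y}}'(\hat y,p)$ and then cancels the invertible factor, whereas you compose with its inverse explicitly.
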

\begin{proof}
    Consider the mapping, defined on the right hand side of \eqref{eq:retrationE}:
    \begin{align*}
    \widetilde{R}_{(\hat{y}, p)} : T_{\hat{y}}\Y\times E_{\hat{y}} &\to \E\\
    (\delta y, \delta p) &\mapsto  \VTforward{\hat{y}}(R^\Y_{\hat{y}}(\delta y))(p+\delta p)=\NE{\VTforward{\hat{y}}}(R^\Y_{\hat{y}}(\delta y),p+\delta p).
    \end{align*}
    By invertibility of $\NE{\VTforward{\hat{y}}}'(\hat{y},p)$ by Lemma~\ref{eq:VTprimeInverse} it holds $\delta e=0_{\hat{e}}$, if and only if $(\delta y,\delta p)=0_{(\hat{y},\hat{y})}$. Hence,
    \[
        R_{\hat{e}}(0_{\hat{e}}) = \widetilde{R}_{(\hat{y}, p)}(0_{\hat y},0_{\hat y})=
        \VTforward{\hat{y}}(R^\Y_{\hat{y}}(0_{\hat{y}}))(p) = \VTforward{\hat{y}}(\hat{y})p = \hat{e}.
    \]
    Then, using that $R_{\hat{y}}^\Y$ is a retraction we get by the chain rule for all $(\phi_y, \phi_p) \in T_{\hat{y}}\Y \times E_{\hat{y}}$ :
    \begin{align*}
        \widetilde{R}_{(\hat{y}, p)}'(0_{\hat{y}}, 0_{\hat{y}})(\phi_y, \phi_p) &= \NE{\VTforward{\hat{y}}}'(\hat{y}, p)((R^{\Y}_{\hat y})'(0_{\hat y})\phi_y, \phi_p)
        = \NE{\VTforward{\hat{y}}}'(\hat{y}, p)(\phi_y, \phi_p).
    \end{align*}
     By definition $R_{\hat e}\circ \NE{\VTforward{\hat{y}}}'(\hat{y},p)=\widetilde{R}_{(\hat{y}, p)}$. Thus application of the chain rule to this equality yields
    \begin{equation}\label{eq:derPseudoRet}
     R_{\hat{e}}'(0_{\hat{e}})\NE{\VTforward{\hat{y}}}'(\hat{y}, p)=\widetilde{R}_{(\hat{y}, p)}'(0_{\hat{y}}, 0_{\hat{y}})=\NE{\VTforward{\hat{y}}}'(\hat{y}, p)
    \end{equation}
    Since $\NE{\VTforward{\hat{y}}}'(\hat{y}, p)$ is invertible by Lemma~\ref{eq:VTprimeInverse}, we obtain $R_{\hat{e}}'(0_{\hat{e}})=\id_{T_{\hat{e}}\E}$.
\end{proof}
A similiar construction can be found in \cite{loayza2026retraction}.
In summary, we get the main result of this section:
\begin{theorem}\label{thm:Newton}
    The iterates generated by Algorithm~\ref{alg:ImplementableNewton} coincide with the iterates generated by Newton's method applied to $L' :\E \to T^*\E$ if the connection on $T^*\E$ is induced by the vector transport \eqref{eq:VTTE} and the retraction on $\E$ is given by \eqref{eq:retrationE}.
\end{theorem}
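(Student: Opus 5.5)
We argue by induction over the iteration index $k$, showing that one step of Algorithm~\ref{alg:ImplementableNewton} starting at $(y_k,p_k,u_k)$ produces the same point as one Newton step for $L'$ starting at $(e_k,u_k)$, where $e_k=(y_k,p_k)\in E_{y_k}$. Since both methods start from the same initial data $e_0=(y_0,p_0)$, $u_0$, this yields the claim. Throughout one step we fix $\hat y=y_k$. Note that $\VTforward{y_k}(y_k)=\id$, so $\NE{\VTforward{y_k}}(y_k,p_k)=e_k$ and the splitted iterate represents $e_k$ exactly.

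\textbf{Step 1: the first derivatives correspond.} By \eqref{eq:LyDerivativeChainrule}, evaluated at $(y,p,u)=(y_k,p_k,u_k)$, we have
\[
L_{y_k}'(y_k,p_k,u_k)(\phi_y,\phi_p,\phi_u)=L'(e_k,u_k)(\phi_e,\phi_u),
\qquad \phi_e=\NE{\VTforward{y_k}}'(y_k,p_k)(\phi_y,\phi_p).
\]

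\textbf{Step 2: the second derivatives correspond.} Proposition~\ref{prop:RelationSecondDerivatives}, with the connection on $T^*\E\times U^*$ induced by \eqref{eq:VTTE}, identifies the covariant second derivative of $L_{y_k}$ applied to $(\delta y,\delta p,\delta u)$ and tested with $(\phi_y,\phi_p,\phi_u)$ with the covariant derivative of $L'$ at $(e_k,u_k)$ applied to $(\delta e,\delta u)$ and tested with $(\phi_e,\phi_u)$. Here
\[
\delta e=\NE{\VTforward{y_k}}'(y_k,p_k)(\delta y,\delta p),
\]
and $\phi_e$ is defined in the same way.

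\textbf{Step 3: the Newton equations are equivalent.} By Lemma~\ref{lem:InvVPrime}, the map $\NE{\VTforward{y_k}}'(y_k,p_k)$ is an isomorphism $T_{y_k}\Y\times E_{y_k}\to T_{e_k}\E$. Hence the test functions $(\phi_e,\phi_u)$ range over all of $T_{e_k}\E\times U$ exactly when $(\phi_y,\phi_p,\phi_u)$ ranges over $T_{y_k}\Y\times E_{y_k}\times U$. Adding the identities from Steps 1 and 2, the splitted Newton equation holds for all splitted test functions if and only if the Newton equation for $L'$ holds for all $(\phi_e,\phi_u)$. Consequently $(\delta y_k,\delta p_k,\delta u_k)$ solves the splitted system if and only if $(\delta e_k,\delta u_k)$, with $\delta e_k=\NE{\VTforward{y_k}}'(y_k,p_k)(\delta y_k,\delta p_k)$, solves the unsplitted one. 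The correspondence between the two solutions is bijective, so unique solvability of either system transfers to the other.

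\textbf{Step 4: the updates coincide.} The retraction \eqref{eq:retrationE} is defined through exactly this parametrization of $\delta e_k$. Therefore
\[
R_{e_k}(\delta e_k)=\VTforward{y_k}\big(R^\Y_{y_k}(\delta y_k)\big)(p_k+\delta p_k),
\]
which is precisely the update of Algorithm~\ref{alg:ImplementableNewton}. The control update $u_k+\delta u_k$ is identical in both methods.

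It remains to check that the new splitted iterate is consistent with $e_{k+1}$ before re-splitting around $y_{k+1}$. The algorithm stores $p_{k+1}\in E_{y_{k+1}}$, and the pair $(y_{k+1},p_{k+1})$ is just $e_{k+1}$. At the next iteration the splitting is rebuilt around $y_{k+1}$, where again $\VTforward{y_{k+1}}(y_{k+1})=\id$, so the induction closes.

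\textbf{Main obstacle.} The delicate point is that the connection \eqref{eq:VTTE} and the retraction \eqref{eq:retrationE} depend on the base point $\hat y=y_k$, i.e.\ on the current iterate. The statement must therefore be read as Newton's method with iterate-dependent connection and retraction, which is the setting of \cite[Alg.~1]{weigl2025newton}. One also has to verify carefully, via Prop.~\ref{prop:VTTE} and $\WTforward{y_k}(y_k)=\id$, that $\VVTforward{e_k}$ is indeed anchored at $e_k$.
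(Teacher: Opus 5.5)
Your proposal is correct and follows the paper's proof essentially step for step. Both arguments combine the first-derivative relation \eqref{eq:LyDerivativeChainrule}, Proposition~\ref{prop:RelationSecondDerivatives}, the bijectivity from Lemma~\ref{lem:InvVPrime} to pass between the two Newton equations, and the definition \eqref{eq:retrationE} to identify the updates; your induction over $k$ only makes explicit what the paper leaves implicit. Your ``main obstacle'' is milder than you suggest: \eqref{eq:VTTE} and \eqref{eq:retrationE} are defined at every base point $\hat e$ with $\hat y=\pi(\hat e)$, so they define a single connection and a single retraction on $\E$, exactly as in Definition~\ref{def:VTconnection}, and not an iterate-dependent family.
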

\begin{proof}
    Consider Newton's method applied to $L' : \E\times U\to T^*\E\times U^*$. In the $k$-th iteration the Newton equation
    \begin{equation}\label{eq:NewtonEquationTE}
        \nabla^{T^*\E \times U}L'(e_k, u_k)(\delta e_k, \delta u_k) + L'(e_k, u_k) = 0_{(e_k,u_k)}^*
    \end{equation}
    is solved to compute a Newton direction $(\delta e_k, \delta u_k)\in T_{e_k}\E \times U$. Then, retraction \eqref{eq:retrationE} and an additive update for the $U$-component is applied, i.e., $(e_{k+1}, u_{k+1}) = (R_{e_k}(\delta e_k), u_k + \delta u_k)$. \\
    Using $e_k = (y_k, p_k) = \NE{\VTforward{y_k}}(y_k, p_k)$, the relation 
    \[
        L_{y_k}'(y_k, p_k, u_k) = L'(\NE{\VTforward{y_k}}(y_k,p_k), u_k)(\NE{\VTforward{y_k}}'(y_k,p_k), \mathrm{id}_U) = L'(e_k, u_k)(\NE{\VTforward{y_k}}'(y_k,p_k), \mathrm{id}_U),
    \]
    Proposition~\ref{prop:RelationSecondDerivatives} and the bijectivity of $\NE{\VTforward{y_k}}'(y_k,p_k)$, solving the Newton equation \eqref{eq:NewtonEquationTE} is equivalent to solving the equation
    \[
        \nabla^{T^*\Y \times E_{y_k}^*\times U^*} L_{y_k}'(y_k, p_k, u_k)(\delta y_k, \delta p_k, \delta u_k) + L_{y_k}'(y_k, p_k, u_k) = 0_{(y_k,y_k,u_k)}^*
    \]
    from Algorithm~\ref{alg:ImplementableNewton}.
    Again, using the bijectivity of $\NE{\VTforward{y_k}}'(y_k,p_k)$ and $\pi(\VTforward{y_k}(R_{y_k}^\Y(\delta y_k))(p_k + \delta p_k)) = R_{y_k}^\Y(\delta y_k)$, the Newton step $(e_{k+1}, u_{k+1}) = (R_{e_k}(\delta e_k), u_k+\delta u_k)$ is equivalent to the iterative step $(y_{k+1}, p_{k+1}, u_{k+1}) = (R_{y_k}^\Y(\delta y_k), \VTforward{y_k}(R_{y_k}^\Y(\delta y_k))(p_k+\delta p_k), u_k + \delta u_k)$ from Algorithm~\ref{alg:ImplementableNewton}.
\end{proof}
To obtain local superlinear convergence of Newton's method applied to $L'$ a suitable smoothness assumption, namely \emph{Newton-differentiability} of $L'$ (see \cite[Def. 4.3]{weigl2024newton}) with respect to a \emph{Newton-derivative} $L''$ is required (cf. \cite[Section 5.3]{weigl2024newton}). Moreover, a suitable boundedness assumption for the operator norm of $\left(\nabla^{T^*\E\times U^*}L'(e,u)\right)^{-1} : T_e\E^*\times U^* \to T_e\E\times U$ is needed. To this end, we require a norm on $T_e\E$ and use the norm of a linear operator $F : X^* \to X$ given by
\[
    \Vert F\Vert_{X^*\to X} := \sup_{\Vert \ell\Vert_{X^*}\leq 1} \Vert F\ell\Vert_X \text{ with } \Vert \ell\Vert_{X^*} = \sup_{x\in X}\vert \ell(x)\vert.
\]
Then, we get the following local convergence result for Algorithm~\ref{alg:ImplementableNewton}.
\begin{corollary}\label{cor:localConvergence}
    Let $(e_\star, u_\star)$ be a zero of $L'$
    and let $L' : \E\times U \to T^*\E\times U^*$ be Newton-differentiable at $(e_\star, u_\star) \in \E\times U$ with respect to a Newton-derivative $L''(e,u) \in L(T_e\E \times U, T_{L'(e,u)}(T^*\E \times U^*))$. Assume that for $(e,u)\in \E\times U$ the operator norm of $\left(\nabla^{T^*\E\times U^*}L'(e,u)\right)^{-1}$ is uniformly bounded, i.e., there exists $\beta < \infty$ such that
    \[
        \Vert \left(\nabla^{T^*\E\times U^*}L'(e,u)\right)^{-1}\Vert_{ T_e\E^*\times U^* \to T_e\E\times U} \leq \beta.
    \]
    Then, the iterates $(y_k, p_k, u_k)$ of Algorithm~\ref{alg:ImplementableNewton} converge locally superlinearly to $(e_\star, u_\star) = ((y_\star, p_\star), u_\star)$.
\end{corollary}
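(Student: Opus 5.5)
The plan is to reduce the claim to the general local convergence theorem for Newton's method on vector bundles from \cite{weigl2024newton}, using Theorem~\ref{thm:Newton} as the bridge. By Theorem~\ref{thm:Newton}, the iterates $(y_k,p_k,u_k)$ of Algorithm~\ref{alg:ImplementableNewton} coincide with the iterates $(e_k,u_k)$, $e_k=(y_k,p_k)$, of Newton's method for $L'$. This Newton method uses the covariant derivative $\nabla^{T^*\E\times U^*}L'$ induced by the transport \eqref{eq:VTTE} and the retraction \eqref{eq:retrationE} on $\E$, with additive updates in $U$. It therefore suffices to check that the hypotheses of the abstract superlinear convergence result \cite[Section 5.3]{weigl2024newton} hold for this choice of connection and retraction.

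The first step is to assemble these hypotheses. Newton-differentiability of $L'$ at $(e_\star,u_\star)$ with respect to $L''$ is assumed. Combined with the connection map $Q^*$ from \eqref{eq:VTTE}, it gives Newton-differentiability in the covariant sense, in the form required in \cite[Def. 4.3]{weigl2024newton}. I would verify this in local trivializations, where $Q^*_{l,\tau}$ from \eqref{eq:Qstartriveq} is a $C^1$ perturbation of the identity. The uniform bound $\beta$ on $(\nabla^{T^*\E\times U^*}L'(e,u))^{-1}$ is likewise assumed. The retraction \eqref{eq:retrationE} is $C^1$, because $R^\Y$ is $C^1$ and $\VTforward{\hat y}$ is $C^2$. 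Its first-order property $R_{\hat e}'(0_{\hat e})=\id$ was established in the preceding proposition, so it satisfies the retraction requirement of the abstract theorem.

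The abstract result then yields $(e_k,u_k)\to(e_\star,u_\star)$ superlinearly in charts near $(e_\star,u_\star)$. The last step transfers this to the splitted iterates. Convergence $e_k\to e_\star$ in $\E$ is, in a trivialization, convergence of $(y_k,p_{k,\tau})$, which is exactly convergence of $(y_k,p_k)$. Superlinear rates are preserved under the bi-Lipschitz chart maps.

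I expect the main obstacle to be that the connection \eqref{eq:VTTE} and the retraction \eqref{eq:retrationE} are built around the current base point $\hat y=y_k$. They therefore change from iteration to iteration, whereas the abstract theorem may assume one fixed connection and retraction. I would handle this by requiring the transports $\VTforward{y}$ and $\WTforward{y}$ to depend continuously (say $C^1$) on the anchor $y$. The family of connection maps and retractions then satisfies the required estimates uniformly in a neighbourhood of $y_\star$. In particular, $\|Q^*\|$, the consistency constants relating $\nabla L'$ to $L''$, and the first-order retraction errors would be bounded uniformly for all anchors near $y_\star$. If these uniform constants are available, the proof in \cite{weigl2024newton} goes through verbatim with iteration-dependent data, and the corollary follows.
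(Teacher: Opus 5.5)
Your proposal follows the paper's proof: Theorem~\ref{thm:Newton} identifies the iterates of Algorithm~\ref{alg:ImplementableNewton} with Newton iterates for $L'$, and the abstract local convergence result \cite[Prop.~5.11]{weigl2024newton} then gives superlinear convergence, which is all the paper does. Your concern about iteration-dependent data is milder than you suggest, because anchoring the transports at the base point of each evaluation point already defines a single connection on $T^*\E\times U^*$ and a single retraction on $\E$; what is needed is only their regularity in the anchor, which the paper takes for granted and your added assumption supplies.
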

\begin{proof}
    Since the iterates of Algorithm~\ref{alg:ImplementableNewton} coincide with Newton iterates for finding a zero of $L'$ due to Theorem~\ref{thm:Newton}, we obtain the desired result by \cite[Prop. 5.11]{weigl2024newton}.
\end{proof}
In order to implement Algorithm~\ref{alg:ImplementableNewton}, we require a representation of the second covariant derivative of $L_{\hat{y}}$ at the current iterate, i.e., $\nabla^{T^*\Y\times E_{\hat{y}}^*\times U^*} L_{\hat{y}}'(\hat{y},p,u)$. In the following, we therefore derive expressions for the partial derivatives of $L_{\hat{y}}'$ with respect to $y$, $p$ and $u$ and evaluate these at $(\hat{y}, p,u)$. 
\subsection{Computation of second derivatives}\label{sec:ComputationSecondDerivatives}
By symmetry of the linear Newton operator, it suffices to compute its lower triangle and obtain the remaining entries by taking adjoints (or transposes after discretization):
\[ \renewcommand{\arraystretch}{1.4}
    \begin{pmatrix}
        \partial_{yy}L_{\hat{y}}(\hat{y},p,u) & \partial_{yu}L_{\hat{y}}(\hat{y},p,u)^* & \vline &\partial_{yp}L_{\hat{y}}(\hat{y},p,u)^*  \\
        \partial_{uy}L_{\hat{y}}(\hat{y},p,u)  & \partial_{uu}L_{\hat{y}}(\hat{y},p,u) & \vline & \partial_{pu}L_{\hat{y}}(\hat{y},p,u)^* \\
        \hline
        \partial_{py}L_{\hat{y}}(\hat{y},p,u)  & \partial_{pu}L_{\hat{y}}(\hat{y},p,u)  & \vline & \partial_{pp}L_{\hat{y}}(\hat{y},p,u) 
    \end{pmatrix}.
\]
We begin by deriving an expression for the most challenging term, $\partial_{yy}L_{\hat{y}}$. We then compute the derivatives $\partial_{uy}L_{\hat{y}}$ and $\partial_{uu}L_{\hat{y}}$ of the stationarity equation, and finish with the derivatives of the state equation with respect to each of the variables.
\paragraph{Derivative of the adjoint equation.}
Consider the first derivative of $L_{\hat{y}}$ given in \eqref{eq:AdjointEquationAugmented}:
\begin{align*}
    \partial_y L_{\hat{y}}(y,u,p)\phi_y = \partial_y J(y,u)\phi^y + \left(\nabla^{\E^*}(A(\cdot) - B(\cdot)u)(y)\phi_y \right)v(y)\nonumber \\
    + (A(y) - B(y)u)\left(\nabla^\E v(y)\phi_y\right) & \text{ for } \phi_y \in T_y\Y,
\end{align*}
where $v(y) = \VTforward{\hat{y}}(y)p$. Remember that since in the adjoint equation this derivative is evaluated at $(\hat{y},p,u)$ the covariant derivative $\nabla^\E v(\hat{y})$ of $v$ vanishes. However, for the computation of second derivatives with respect to $y$ this additional term has to be taken into account. \\ 
We first observe that $\partial_y J(\cdot,u): \Y\to T^*\Y$ is a mapping into the cotangent bundle $T^*\Y$. Moreover, by the product rule for covariant derivatives we can write
\[
    \left(\nabla^{\E^*}(A(\cdot) - B(\cdot)u)(y)\phi_y \right)v(y)
    + (A(y) - B(y)u)\left(\nabla^\E v(y)\phi_y\right) = ((A(\cdot) - B(\cdot)u)(v))'(y)\phi_y.
\] 
Since the derivative of the dual pairing $(A(\cdot) - B(\cdot)u)(v):\Y\to\R$ is a mapping $((A(\cdot) -B(\cdot)u)(v))':\Y\to T^*\Y$ into the cotangent bundle as well, we have to take the covariant derivative $\nabla^{T^*\Y}$ in order to differentiate $\partial_y L_{\hat{y}}$ with respect to $y$. 
\begin{proposition}\label{prop:Lyy}
    Let $\VTforward{\hat{y}} \in \Gamma(\L(\Y\times E_{\hat{y}}, \E))$ be a vector forward transport on $\E$. Assume that $\E$, and correspondingly $\E^*$, are equipped with connections induced by $\VTforward{y}$. Let $\WTforward{y}\in \Gamma(\L(\Y\times T_y\Y, T\Y))$ be a vector forward transport on $T\Y$ inducing a torsion-free connection on $T\Y$. Then, the second covariant derivative of the locally splitted Lagrangian function    
    \begin{align*}
        L_{\hat{y}} = L \circ (\NE{\VTforward{\hat{y}}}, \id_U) : \Y \times E_{\hat{y}} \times U&\to \R \\
            (y, p, u) &\mapsto J(y,u) + (A(y) - B(y)u)(v(y)),
    \end{align*}
    with $v(y) := \VTforward{\hat{y}}(y)p$ for $p\in E_{\hat{y}}$, at $(\hat{y}, p,u)$ in a direction  $\delta y \in T_{\hat{y}}\Y$ is given by 
    \begin{align*}
        \partial_{yy} L_{\hat{y}}(\hat{y},p,u)\delta y \; \phi_y &= \left(\nabla^{T^*\Y}\partial_y J(\hat{y},u)\delta y\right)\phi_y + \left(\nabla^{\E^*}\left(\nabla^{\E^*}\left(A(\cdot)-B(\cdot)u\right)w(\cdot)\right)(\hat y)\delta y\right)p \\
        &\quad + \left(A(\hat{y})-B(\hat{y})u\right)\left(\nabla^{\E}\left(\nabla^\E v(\cdot)w(\cdot)\right)(\hat{y})\delta y\right),
    \end{align*}
    where $w(y) := \WTforward{\hat{y}}(y)\phi_y$ for $\phi_y \in T_{\hat{y}}\Y$.
    Its representation in trivializations reads
    \begin{equation*}
    \begin{aligned}
    &\partial_{yy}J_{\tau}(\hat{y},u)\delta y \, \phi_{y,\tau} + \partial_yJ_{\tau}(\hat{y},u)\big(\WTprime{\hat{y},\tau}{\rightarrow}(\hat{y})\delta y \; \phi_{y,\tau}\big) \\
    &\quad + \left(A_\tau''(\hat{y}) - B''_{\tau}(\hat{y})u\right)\delta y \, \phi_{y,\tau}\, p_\tau+ \left(A_\tau'(\hat{y}) - B'_{\tau}(\hat{y})u\right)\phi_y(\VTprime{\hat{y},\tau}{\rightarrow}(\hat{y})\delta y \, p_\tau) \\
    &\quad + \left(A_\tau'(\hat{y}) - B'_{\tau}(\hat{y})u\right)\delta y\,\big(\VTprime{\hat{y},\tau}{\rightarrow}(\hat{y})\phi_y \, p_\tau\big) + \left(A_\tau(\hat{y}) - B_\tau(\hat{y})u\right)\big(\VTforward{\hat{y},\tau}''(\hat{y})\delta y \, \phi_{y,\tau} \,p_\tau\big)\\
    &\quad + \left(A_\tau'(\hat{y}) - B'_{\tau}(\hat{y})u\right)\big(\WTprime{{\hat{y},\tau}}{\rightarrow}(\hat{y})\delta y \, \phi_{y,\tau}\big)\, p_\tau  + \left(A_\tau(\hat{y}) - B_\tau(\hat{y})u\right)\big(\VTprime{\hat{y},\tau}{\rightarrow}(\hat{y})\big(\WTprime{{\hat{y},\tau}}{\rightarrow}(\hat{y})\delta y\,\phi_{y,\tau}\big)\,p_\tau\big).
    \end{aligned}
    \end{equation*}
\end{proposition}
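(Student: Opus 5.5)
The plan is to compute $\partial_{yy}L_{\hat y}$ with the moving test function technique of Proposition~\ref{prop:computationDualConnection}, applied to the covector field $\partial_y L_{\hat y}(\cdot,p,u)\in\Gamma(T^*\Y)$. For fixed $\phi_y\in T_{\hat y}\Y$ we set $w(y):=\WTforward{\hat y}(y)\phi_y$ and write
\[
\left(\nabla^{T^*\Y}\partial_yL_{\hat y}(\cdot,p,u)(\hat y)\delta y\right)\phi_y=\frac{d}{dy}\Big(\partial_yL_{\hat y}(y,p,u)\,w(y)\Big)\Big|_{y=\hat y}\delta y .
\]
Into this we insert the first-derivative formula \eqref{eq:AdjointEquationAugmented}, evaluated at $\phi_y=w(y)$. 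This splits the expression into three scalar functions of $y$:
\begin{itemize}
\item[(a)] $\partial_yJ(y,u)w(y)$;
\item[(b)] $\big(\nabla^{\E^*}(A(\cdot)-B(\cdot)u)(y)w(y)\big)v(y)$;
\item[(c)] $(A(y)-B(y)u)\big(\nabla^\E v(y)w(y)\big)$.
\end{itemize}
We differentiate each one separately at $\hat y$.

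For term (a), Proposition~\ref{prop:computationDualConnection} applied to $\partial_yJ(\cdot,u)\in\Gamma(T^*\Y)$ gives directly $(\nabla^{T^*\Y}\partial_yJ(\hat y,u)\delta y)\phi_y$.

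For term (b), the map $y\mapsto \nabla^{\E^*}(A-Bu)(y)w(y)$ is a section of $\E^*$, and $v$ is a section of $\E$. By the product rule \eqref{eq:defdualconnection} of Proposition~\ref{pro:PrimalDualConnectionVT}, its derivative is
\[
\big(\nabla^{\E^*}(\nabla^{\E^*}(A-Bu)w)(\hat y)\delta y\big)v(\hat y)+\big(\nabla^{\E^*}(A-Bu)(\hat y)\phi_y\big)\big(\nabla^\E v(\hat y)\delta y\big).
\]
The second summand vanishes by Proposition~\ref{prop:computationPrimalConnection}, because $v=\VTforward{\hat y}(\cdot)p$. Since $v(\hat y)=p$, we obtain the stated middle term.

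For term (c), the product rule applied to the sections $A-Bu$ and $\nabla^\E v\,w$ gives
\[
\big(\nabla^{\E^*}(A-Bu)(\hat y)\delta y\big)\big(\nabla^\E v(\hat y)\phi_y\big)+(A(\hat y)-B(\hat y)u)\big(\nabla^\E(\nabla^\E v\,w)(\hat y)\delta y\big).
\]
Again $\nabla^\E v(\hat y)=0$ kills the first summand, which leaves the last term of the statement. Note that torsion-freeness is not needed for this intrinsic formula; it only guarantees the symmetry discussed in the preceding Remark.

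For the trivialized formula we would instead work directly with the representation \eqref{eq:adjointEquationTrivialization}, with $\phi_y$ replaced by $w_\tau(y)=\WTforward{\hat y,\tau}(y)\phi_{y,\tau}$. We then differentiate in $y$ at $\hat y$ using the product rule in the Banach space $\mathbb E$, together with $\VTforward{\hat y,\tau}(\hat y)=\mathrm{Id}$, $\WTforward{\hat y,\tau}(\hat y)=\mathrm{Id}$ and $w_\tau'(\hat y)\delta y=\WTprime{\hat y,\tau}{\rightarrow}(\hat y)\delta y\,\phi_{y,\tau}$. The three summands of \eqref{eq:adjointEquationTrivialization} produce the eight listed terms:
\begin{itemize}
\item[(i)] $\partial_yJ_\tau\,w_\tau$ yields the two $J$-terms;
\item[(ii)] $(A'_\tau-B'_\tau u)\,w_\tau\,(\VTforward{\hat y,\tau}p_\tau)$ is trilinear in the $y$-dependence and yields the $A''_\tau$-term, the $\WTprime{}{}$-term with $p_\tau$, and the term $(A'_\tau-B'_\tau u)\phi_y(\VTprime{}{}\delta y\,p_\tau)$;
\item[(iii)] $(A_\tau-B_\tau u)(\VTprime{\hat y,\tau}{\rightarrow}(y)w_\tau(y)\,p_\tau)$ yields the term with $A'_\tau\delta y$, the $\VTforward{\hat y,\tau}''$-term, and the term containing $\VTprime{}{}(\WTprime{}{}\delta y\,\phi_{y,\tau})$.
\end{itemize}

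The main obstacle is consistency between the two forms rather than any single step. The trivialized expression is the coordinate formula obtained from the second, unsimplified derivative. Matching it against the covariant expression requires checking that the terms which cancel intrinsically (through $\nabla^\E v(\hat y)=0$) have coordinate counterparts that are already absorbed: the transport-derivative corrections coming from $Q$ and $Q^*$ in \eqref{eq:Qstartriveq} must cancel pairwise, as they did in \eqref{eq:linStateEqTrivialization}. To keep this bookkeeping manageable, I would derive the trivialized formula directly from \eqref{eq:adjointEquationTrivialization}, which already contains those cancellations, rather than expanding the nested covariant derivatives through their connection maps.
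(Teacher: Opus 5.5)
Your proposal is correct and follows the paper's proof essentially step by step. You use the moving test function $w(y)=\WTforward{\hat y}(y)\phi_y$, the covariant product rule on the three pairings together with $\nabla^\E v(\hat y)=0$ for the intrinsic form, and ordinary product rules applied to \eqref{eq:adjointEquationTrivialization} (the paper uses the equivalent \eqref{eq:DualPairingTrivialization}) with $w_\tau'(\hat y)\delta y=\WTprime{\hat y,\tau}{\rightarrow}(\hat y)\delta y\,\phi_{y,\tau}$ for the eight trivialized terms. The consistency worry in your last paragraph is unnecessary: both formulas are two expansions of the same scalar derivative $\frac{d}{dy}\big(\partial_y L_{\hat y}(y,p,u)\,w(y)\big)\big|_{y=\hat y}\delta y$, so they agree automatically.
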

\begin{proof}
    Since $\partial_y L_{\hat{y}}(\cdot, u, p) : \Y\to T^*\Y$ is a mapping into a dual vector bundle, namely the cotangent bundle $T^*\Y$, we compute the covariant derivative with respect to $y$ according to the moving test function technique from Proposition~\ref{prop:computationDualConnection}.  Thus, using the vector tranport $\WTforward{\hat{y}}$ on $T\Y$ we extend the test function $\phi_y\in T_{\hat{y}}\Y$, to a vector field $w(y) = \WTforward{\hat{y}}(y)\phi_y$, to obtain:
\begin{equation} \label{eq:adjointEquationTransported} 
    \begin{aligned}
        \partial_y L_{\hat{y}}(y,u,p)\, w(y) &= \partial_y J(y,u)\, w(y)+ \left(\nabla^{\E^*}(A(\cdot) - B(\cdot)u)(y)(w(y)) \right)v(y) \\
        &\quad + (A(y) - B(y)u)\left(\nabla^\E v(y)\,w(y)\right).
    \end{aligned}
\end{equation} 
Fixing $u\in U$ and $p\in E_{\hat y}$, this is a function on $\Y$, whose derivative we will take in direction $\delta y\in T_{\hat y}\Y$, and which is a sum of three dual pairings. To proceed we consider the mappings
    \[
        \partial_y J(\cdot,u) : \Y \to T^*\Y, \quad \left(\nabla^{\E^*}(A(\cdot) - B(\cdot)u)\right)w(\cdot) : \Y\to \E^*, \quad \nabla^{\E}v(\cdot)w(\cdot) : \Y \to \E.
    \]
    Due to Proposition~\ref{prop:computationPrimalConnection}, the covariant derivatives $\nabla^\E v(\hat{y})$ as well as $\nabla^{T\Y}w(\hat{y})$ vanish.
    Using this and applying the product rule for covariant derivatives on the right-hand side of \eqref{eq:adjointEquationTransported} at $y=\hat{y}$ in a direction $\delta y\in T_{\hat{y}}\Y$, we get in terms of covariant derivatives:
\begin{align*}
        \partial_{yy}L_{\hat{y}}(\hat{y},p,u)\delta y \; \phi_y &= \left(\nabla^{T^*\Y}\partial_y J(\hat{y},u)\delta y\right){\phi}_y + \left(\nabla^{\E^*}\left(\nabla^{\E^*}\left(A(\cdot)-B(\cdot)u\right)w(\cdot)\right)(\hat{y})\delta y\right)v(\hat{y}) \\
        &\quad + \left(A(\hat{y})-B(\hat{y})u\right)\left(\nabla^{\E}\left(\nabla^\E v(\cdot)w(\cdot)\right)(\hat{y})\delta y\right).
\end{align*}
Using that $v(\hat{y})=p$, we obtain the first part of the Proposition. 

It remains to compute a representation of this covariant derivative in trivializations. To this end, consider again $\partial_y L_{\hat{y}}$ with the moving test function $w(y)$, i.e., 
\[
    \partial_y L_{\hat{y}}(y,p,u)\big(w(y)\big) = \partial_y J(y,u)\big(w(y)\big) + \left((A(\cdot)-B(\cdot)u)(v)\right)'(y)\big(w(y)\big)
\]
which coincides with \eqref{eq:adjointEquationTransported} by the product rule for covariant derivatives. Differentiating this with respect to $y$ at $y=\hat{y}$ in direction $\delta y\in T_{\hat{y}}\Y$ by using the product rule in trivializations and $w_\tau(\hat{y}) = \phi_{y,\tau}$, we get the representation
\begin{align*}
    \partial_{yy}L_{\hat{y}}(\hat{y},p,u)\delta y \,\phi_y=\;&\partial_{yy}J_{\tau}(\hat{y},u)\delta y\,\phi_{y,\tau} + \partial_y J_\tau(\hat{y},u)\big(w_\tau'(\hat{y})\delta y\big)\\
    &\quad + \left((A(\cdot)-B(\cdot)u)(v)\right)_\tau''(\hat{y})\delta y \, \phi_{y,\tau} + \left((A(\cdot)-B(\cdot)u)(v)\right)_\tau'(\hat{y})\big(w_\tau'(\hat{y})\delta y\big).
\end{align*}
The representation of $\left((A(\cdot)-B(\cdot)u)(v)\right)'(y)\psi$ for $\psi \in T_y\Y$ is according to \eqref{eq:linStateEqTrivialization} given by 
\begin{equation}\label{eq:DualPairingTrivialization}
    \left((A(\cdot)-B(\cdot)u)(v)\right)_\tau'(y)\psi = 
    (A_\tau'(y) - B'_{\tau}(\hat{y})u)\psi\,v_\tau(\hat{y}) + (A_\tau(y)-B_\tau(y)u)v_\tau'(y)\psi.
\end{equation}
Differentiating this with respect to $y$ at $y=\hat{y}$ in a direction $\delta y\in T_{\hat{y}}\Y$ and testing with $\phi_y$ yields the representation
\begin{equation}\label{eq:secondDerivABTrivialization}
\begin{aligned}
    &\left((A(\cdot)-B(\cdot)u)(v)\right)_\tau''(\hat{y})\delta y \, \phi_{y,\tau}\\
    &\quad=(A_\tau''(\hat{y}) - B''_{\tau}(\hat{y})u)\delta y \, \phi_{y,\tau}\,v_\tau(\hat{y}) + (A_\tau'(\hat{y}) - B'_{\tau}(\hat{y})u)\phi_y\,v'_\tau(\hat{y})\delta y\\
    &\quad\quad + (A_\tau'(\hat{y})-B'_{\tau}(y)u)\delta y \, v_\tau'(\hat{y})\phi_{y,\tau} + (A_\tau(\hat{y})-B_\tau(\hat{y})u)v_\tau''(\hat{y})\delta y \,\phi_{y,\tau}.
\end{aligned}
\end{equation}
Since $v(y) = \VTforward{\hat{y}}(y)p$ we get $v_\tau(\hat{y}) = p_\tau$, and the derivatives of $v$ in trivializations read $v_\tau'(\hat{y})\delta y = \VTprime{\hat{y},\tau}{\rightarrow}(\hat{y})\delta y \, p_\tau$ and $v_\tau''(\hat{y})\delta y \, \phi_{y,\tau} = \VTforward{\hat{y},\tau}''(\hat{y})\delta y \, \phi_{y,\tau} \, p_\tau$. Plugging these expressions into \eqref{eq:secondDerivABTrivialization} and setting $\psi = w_\tau'(\hat{y})\delta y = \WTprime{\hat{y},\tau}{\rightarrow}(\hat{y})\delta y \, \phi_{y,\tau}$ in \eqref{eq:DualPairingTrivialization}, we obtain the desired representation.
\end{proof}
    The second part of the proof of Proposition~\ref{prop:Lyy} provides a practical and constructive way to compute the second covariant derivative of a Lagrangian function by using the moving test functions technique from Proposition~\ref{prop:computationDualConnection}, twice. First, we define the locally splitted Lagrangian $L_{\hat{y}}$ and extend $p$ to a section $v(y) := \VTforward{\hat{y}}(y)p$ on $\E$, and compute the first derivative of the map $y \mapsto L_{\hat{y}}(y,u,v(y))$ with respect to $y$ in a direction $\phi_y\in T_{\hat y}\Y$. Then we extend the test function $\phi_y$ via a vector forward transport to a section $w(y):=\WTforward{\hat{y}}(y)\phi_y$ on $T\Y$ and differentiate the map $y \mapsto \partial_yL_{\hat{y}}(y,u,v(y))w(y)$ in directon $\delta y\in T_{\hat y}\Y$. For both derivatives we use the product rule in trivializations. 
    
    Since the product rule can also be applied in embeddings, the same technique can be used in embedded manifolds and yields the same terms. We used this alternative to computations in trivializations in our numerical examples, below.\\
    \;
    For the implementation we then only require the first and second derivatives of $A$ and $B$ and the first and second derivatives of $\VTforward{\hat y}$, as well as the first derivative of $\WTforward{\hat y}$. In many applications, these ingredients are explicitly available or can be computed with reasonable effort.
    Consequently, despite its unwieldy appearance, the formula for the second covariant derivative is readily implementable in practice.

\paragraph{Derivatives of the stationarity equation.}
Consider the partial derivative $\partial_u L_{\hat{y}}$ given in \eqref{eq:stationarityEquation}:
\begin{equation*}
    \partial_u L_{\hat{y}}(y,u,p)\phi_u = \partial_u J(y,u) \phi_u - (B(y)\phi_u)v(y) \text{ for } \phi_u \in U.
\end{equation*}
Evaluating this at $y=\hat{y}$ yields the stationarity equation. Since the second derivatives of $L_{\hat{y}}$ are symmetric, we use $\partial_{up}L_{\hat{y}}(\hat{y},u,p) = \partial_{pu}L_{\hat{y}}(\hat{y},u,p)^*$, and thus we only have to compute the derivatives $\partial_{uy}L_{\hat{y}}$ and $\partial_{uu}L_{\hat{y}}$.

For the derivative with respect to $y$ at $y=\hat{y}$, we have to take into account that $B(\cdot)\phi_u : \Y\to \E^*$ for a fixed $\phi_u \in U$, as well as $v : \Y\to\E$, map into vector bundles. Thus, by using the product rule for covariant derivatives we obtain for $\phi_u\in U$ and $\delta y \in T_{\hat{y}}\Y$:
\begin{align*}
    \partial_{uy}L_{\hat{y}}(\hat{y},p,u)\delta y\, \phi_u = \partial_{uy}J(\hat{y},u)\delta y \; \phi_u - \left(\nabla^{\E^*}(B(\cdot)\phi_u)(\hat{y}) \delta y\right) v(\hat{y}) - B(\hat{y})\phi_u\left(\nabla^\E v(\hat{y}) \delta y\right).
\end{align*}
Again, using that the covariant derivative of $v(y) = \VTforward{\hat{y}}(y)p$ with respect to $y\in \Y$ vanishes at $y=\hat{y}$ due to Proposition~\ref{prop:computationPrimalConnection}, and $v(\hat{y}) = p$ this reduces to 
\[
    \partial_{uy}L_{\hat{y}}(\hat{y},p, u)\delta y\, \phi_u = \partial_{uy}L(\hat{y},u)\delta y \; \phi_u - \nabla^{\E^*}(B(\cdot)\phi_u)(\hat{y}) \delta y \; p \quad \text{ for } \phi_u \in U \text{ and } \delta y\in T_{\hat{y}}\Y.
\]
In trivializations, this reads according to \eqref{eq:Qstartriveq}
\begin{equation} \label{eq:Luy}
    \partial_{uy}L_{\hat{y}}(\hat{y},p, u)\delta y \,\phi_u = \partial_{uy}J_\tau(\hat{y},u)\delta y \; \phi_{u,\tau} - (B'_\tau(\hat{y})\phi_u\delta y)\,p_\tau - B_\tau(\hat{y})\phi_u\left(\VTprime{\hat{y},\tau}{\rightarrow}(\hat{y})\delta y \, p_\tau\right).
\end{equation}
The derivative with respect to $u$ is given by the ordinary second derivative of $J$ since $U$ is a linear space, i.e., we obtain
\[
    \partial_{uu}L_{\hat{y}}(\hat{y},p, u) \delta u \, \phi_u= \partial_{uu} J(\hat{y},u)\delta u \; \phi_u \quad \text{ for } \phi_u, \delta u \in U.
\]
\paragraph{Derivatives of the state equation.}
Consider the partial derivative
\begin{equation*}
    \partial_p L_{\hat{y}}(y,u,p)\phi_p = (A(y) - B(y)u)(\VTforward{\hat{y}}(y)\phi_p) \text{ for } \phi_p \in E_{\hat{y}}.
\end{equation*}
The derivative with respect to $y\in\Y$ yields the linearized state equation \eqref{eq:linearizedStateEq} which has a representation \eqref{eq:linStateEqTrivialization}. Since the covariant derivative of $\VTforward{\hat{y}}(y)\phi_p$ at $y=\hat{y}$ is zero, this reduces to
\[
    \partial_{py}L_{\hat{y}}(\hat{y},p,u) \delta y\,\phi_p = \left(\nabla^{\E^*}(A(\cdot) - B(\cdot)u)(\hat{y})\delta y \right)\phi_p \quad \text{ for } \phi_p \in E_{\hat{y}} \text{ and } \delta y\in T_{\hat{y}}\Y.
\]
For the derivative with respect to $u \in U$ we use that $B(y)$ is linear in $u$ for fixed $y\in \Y$. Thus, we get
\[
    \partial_{pu}L_{\hat{y}}(\hat{y},p,u)\delta u \, \phi_p = - (B(\hat{y})\delta u)\phi_p \quad \text{ for } \phi_p\in E_{\hat{y}} \text{ and }\delta u \in U.
\]
Obviously, the derivative of $\partial_p L_{\hat{y}}$ with respect to $p$ is zero. \\
We have now computed expressions for every ingredient required for the implementation of Algorithm~\ref{alg:ImplementableNewton}. The choice of the vector transports and the retraction, of course depends on the specific application. We will provide some examples in the next section.
\section{Numerical examples}\label{sec:Numerics}
In this section we will illustrate our results by a couple of examples. First we consider the optimal control of geodesics and surfaces of minimal energy by force fields. Then, we consider two slightly more involved applications from continuum mechanics, namely inextensible rods and shells. All our example arise from energy minimization principles, and thus we have $\E=T\Y$ in all cases. 
We deliberately do not touch questions of (numerical) analysis, since this is beyond the scope of the current work. 

We will present our problems to be solved in the continuous setting. However, the implementation requires discretization by finite elements, which we will only briefly describe for completeness. 
In our examples $\Y$ is a manifold of mappings into a manifold $\M$ that has a natural embedding into a Euclidean vector space. 
For discretization, we can replace $\Y$ by a finite-dimensional product manifold $\Y_h$ and correspondingly $\E$ by $\E_h$ and $U$ by $U_h$. Then, we choose our ansatz functions $x:=(y,p,u)\in \E_h\times U_h$ with matching values at the interpolation nodes $\omega_i$, i.e. $y(\omega_i) \in \M$ and $p(\omega_i) \in T_{y(\omega_i)}\M$, and via polynomial interpolation in the embedding space.
 The test functions $\phi_x := (\phi_y,\phi_p,\phi_u)\in T_y\Y_h\times E_{y,h}\times U_h$ and the search directions $\delta x:=(\delta y,\delta p,\delta u)\in T_y\Y_h\times E_{y,h}\times U_h$ are chosen from finite element spaces, which map into the corresponding tangent spaces at the interpolation nodes. 
To this end, we map standard Lagrange finite basis functions, provided by the library \texttt{Ferrite.jl}~\cite{carlsson_ferrite_2025}, to orthonormal bases of tangent spaces $T_{y_i}\M$ available in \texttt{Manifolds.jl}\cite{AxenBaranBergmannRzecki:2023} at each interpolation node $y_i = y(\omega_i)$. The required retractions are also provided by \texttt{Manifolds.jl}.

Algorithm~\ref{alg:ImplementableNewton} is implemented in \texttt{Julia}~\cite{BezansonEdelanKarpinskiViral:2017} using Newton's method from \texttt{Manopt.jl}~\cite{Bergmann2022}\footnote{see the documentation at \url{https://manoptjl.org/stable/solvers/vectorbundle_newton/}}. The vectors and matrices, representing the first and second derivatives of $L_{y}$, required for the computation of the Newton steps, are assembled in a standard fashion by  evaluating the integral expressions, derived in this paper for (pairs of) finite element basis functions, using standard quadrature. All linear systems were solved by the sparse linear solver, which is built into \texttt{Julia}. 
\subsection{Optimal control of geodesics and surfaces of minimal energy}
As a first application we consider the optimal control of elastic geodesics and surfaces of minimal energy. Let $\Omega \subset \R^d$, $d=1,2$, be a bounded domain. We consider a mapping $y : \Omega \to \M$ into an isometrically embedded submanifold $\M \subset \R^3$ which minimizes the Dirichlet energy 
\[
    E(y) := \frac{1}{2} \int_\Omega \langle y'(\omega), y'(\omega) \rangle_{F} \, d\omega.
\]
By additionally requiring boundary conditions, we obtain a geodesic in the case $d=1$, and a surface of minimal energy confined to $\M$ for $d=2$.\\
\;\\
We consider $U := L_2(\Omega, \R^3)$ and the Sobolev manifold (cf. e.g. \cite{hajlasz2009sobolev})
\begin{equation*}
    \Y := H^1(\Omega, \M) \coloneqq \{ g \in H^1(\Omega, \R^3) \; \mid \; g(\omega) \in \M \text{ almost everywhere} \}. 
\end{equation*}
Then, as discussed in Section~\ref{sec:VariationalProblemsAndOCP}, by introducing a control $u \in U$ acting on the state $y\in \Y$, we obtain an elastic geodesic, or an elastic surface of minimal energy, creating a balance of forces by solving the state equation
\[
     (A(y)-B(y)u)\phi = \int_\Omega \langle y'(\omega), \phi'(\omega)\rangle_{F} + \langle u(\omega), \phi(\omega) \rangle_{2} \, d\omega = 0 \quad \text{ for all } \phi \in E_y.
\]
Here, we set $\E := T\Y$ and define
\begin{align*}
    A : \Y &\to \E^* & B : \Y\times U &\to \E^*\\
    A(y)\phi &:= \int_\Omega \langle y'(\omega), \phi'(\omega)\rangle_{F} \, d\omega, & (B(y)u)\phi &:= - \int_\Omega \langle u(\omega), \phi(\omega)\rangle_{2} \, d\omega.
\end{align*}
We use a standard tracking-type functional to penalize the deviation of the state $y$ from a desired configuration $y_d \in \Y$. Here, we substract $y_d$ from $y$ in the embedding space. Using a regularization parameter $\alpha >0$ we then consider the geometric optimal control problem 
\begin{align*}
    &\min_{(y,u)\in \Y\times U} \frac{1}{2}\|y-y_d\|_{L^2(\Omega)}^2 + \frac{\alpha}{2}\Vert u \Vert_{L^2(\Omega)}^2 \\
    & \hspace{0.5cm} s.t.  \int_\Omega \langle y'(\omega), \phi'(\omega)\rangle_{F} + \langle u(\omega), \phi(\omega)\rangle_{2} \, d\omega = 0 \quad \text{ for all } \phi \in E_y.
\end{align*} 
For the treatment of the optimal control problem, we would like to define the locally splitted Lagrangian function. To this end, we require a vector transport on $\E = T\Y$. First, using the embedding of $T\M$ into $\R^n$ given by $\iota_{\R^3}(\eta)\in L(T_{\eta}\M, \R^3)$ for $\eta\in \M$, we can define a vector forward transport on $T\M$ by using orthogonal projections $P(\xi)\colon \R^3 \to T_{\xi}\M$ for $\xi\in \M$:
\begin{equation*}
    \VTforward{\eta}(\xi) := P(\xi)\iota_{\R^3}(\eta) \in L(T_{\eta}\M, T_{\xi}\M).
\end{equation*}
With this, we can construct a vector forward transport $\VTforward{\hat{y}}(y) : T_{\hat{y}}\Y \to T_y\Y$ on $\E = T\Y$ in a pointwise fashion by setting 
\begin{equation}\label{eq:VTH1}
    (\VTforward{\hat{y}}(y)\phi) (\omega) :=(\VTforward{\hat{y}(\omega)}(y(\omega)))\phi(\omega) = P(y(\omega))\iota_{\R^3}(\hat{y}(\omega))\phi(\omega) \text{ for each } \omega\in \Omega,
\end{equation}
where $\phi  \in T_{\hat{y}}\Y$, i.e., $\phi(\omega) \in T_{\hat{y}(\omega)}\M$. For $\hat{y}\in \Y$, this yields the locally splitted Lagrangian
\begin{align*}
    L_{\hat{y}}:\Y\times E_{\hat{y}}\times U &\to \R \\
    (y,p,u) &\mapsto \frac{1}{2}\|y-y_d\|_{L^2(\Omega)}^2 + \frac{\alpha}{2}\Vert u \Vert_{L^2(y)}^2 \\
    &\qquad + \int_\Omega \langle y'(\omega), \left(\VTforward{\hat{y}(\cdot)}(y(\cdot))p(\cdot)\right)'(\omega)\rangle_{F} + \langle u(\omega), \VTforward{\hat{y}(\omega)}(y(\omega))p(\omega)\rangle_{2} \, d\omega.
\end{align*}
Differentiating $L_{\hat{y}}$ with respect to $y$, $u$ and $p$ at $(y,u,p) = (\hat{y}, p,u)$ yields the optimality conditions corresponding to the optimal control problem. The representation of the \eqref{eq:adjointEq} is given by \eqref{eq:adjointEquationTrivialization} where we compute the derivative of the vector transport in the embedding space $\R^3$, i.e., we substitute $(\VTprime{\hat{y}, \tau}{\rightarrow}(\hat{y})\phi_y \,p)(\omega) = P'(\hat{y}(\omega))\phi_y(\omega)p(\omega)$ for $\phi_y \in T_{\hat{y}}\Y$ and $\omega \in \Omega$. For the vector transport $\WTforward{\hat{y}}$ on $T\Y$ required for the computation of the second covariant derivative $\partial_{yy}L_{\hat{y}}(\hat{y}, p,u)$ according to Proposition~\ref{prop:Lyy}, we choose \eqref{eq:VTH1} as well. Then, the Newton matrix, can be determined due to the representations given in Section~\ref{sec:ComputationSecondDerivatives}. For the sake of brevity, we omit the explicit calculation of the usual first and second derivatives of $A$ and $B$. The representation of the second derivative of $\VTforward{\hat{y}}$ with respect to $y$ at $y=\hat{y}$ is given by $(\VTforward{\hat{y}, \tau}''(\hat{y})\delta y \,\phi_y \,p)(\omega) = P''(\hat{y}(\omega))\delta y(\omega)\phi_y(\omega)p(\omega)$ for $\delta y, \phi_y \in T_{\hat{y}}\Y$ and $\omega \in \Omega$. For the implementation of the Newton step, we again use the vector forward transport $\VTforward{\hat{y}}$. 
As a retraction $R^\Y$ on $\Y = H^1(\Omega, \M)$ we use a retraction $R^\M$ on $\M$ in a pointwise fashion, i.e. for $y \in \Y$, $\delta y \in T_{y}\Y$ and for each $\omega \in \Omega$ we set
\[
    \left(R_{y}^\Y(\delta y)\right)(\omega) := R^\M_{y(\omega)}(\delta y(\omega)).
\]
In our numerical examples we choose the unit sphere $\M = \mathbb{S}^2 \subset \R^3$ equipped with the Riemannian metric given by the Euclidean inner product restricted to each embedded tangent space. Then, the orthogonal projection onto the tangent space $T_\xi \mathbb{S}^2 \cong \xi^\perp$ is given by
\begin{equation*}
    P(\xi)\colon \R^3 \to T_\xi\mathbb{S}^2, \quad P(\xi) := \id - \xi\langle \xi, \cdot \rangle_2.
\end{equation*}
The derivative of the orthogonal projection at $\hat{\xi}\in\mathbb{S}^2$ in the direction $\phi_\xi \in T_{\hat{\xi}}\mathbb{S}^2$ applied to $v\in T_{\hat{\xi}}\mathbb{S}^2$ is given by
\begin{equation*}
    (P'(\hat{\xi})\phi_\xi)v = -\hat{\xi}\langle \phi_\xi, v\rangle_2 - \phi_\xi \langle \hat{\xi}, v \rangle_2.
\end{equation*}
Its second derivative in the direction $\delta \xi \in T_{\hat{\xi}}\mathbb{S}^2$ is then given by 
\[
    (P''(\hat{\xi})\delta \xi \, \phi_\xi)v = -\delta \xi\langle \phi_\xi, v\rangle_2 - \phi_\xi \langle \delta \xi, v \rangle_2.
\]
As a retraction $R^\M$ on the sphere we use the retraction given by the normalization, i.e., for $\xi \in \mathbb{S}^2$, $\delta \xi \in T_{\xi}\mathbb{S}^2$ we set
\begin{equation*}
    R^\M_{\xi}(\delta \xi) := \frac{\xi+\delta \xi}{\lVert\xi+\delta\xi\rVert_2} \in \mathbb{S}^2.
\end{equation*}
 Figure~\ref{fig:geodesicForce} shows the local superlinear convergence that we expected to see due to Corollary \ref{cor:localConvergence}. In addition, the resulting controlled geodesic can be seen, as well as the initial guess and the control acting on each point of the curve. Figure~\ref{fig:minimalSurface} shows a similar result for the optimal control of a surface of minimal energy. 
\begin{figure}[tbp]
\begin{minipage}[t]{0.54\textwidth}
        \begin{tikzpicture}
        \begin{axis}[
            xlabel={Iteration},
            ylabel={$\Vert (\delta y, \delta p, \delta u)\Vert$},
            ymode=log,
            grid=both,
            width=12cm,
            height=8.5cm,
            scale=0.4,
            xtick={1,2,3,4,5},
            ytick distance=10^4
        ]
                \addplot table [
            col sep=comma,
            header=false,
            x expr=\coordindex+1,   
            y index=0             
            ]{data/norm_newton_direction_geodesic_OCP.csv};
        \end{axis}
        \end{tikzpicture}
\end{minipage}
\begin{minipage}[t]{0.45\textwidth}
    \centering
    \includegraphics[scale=0.5]{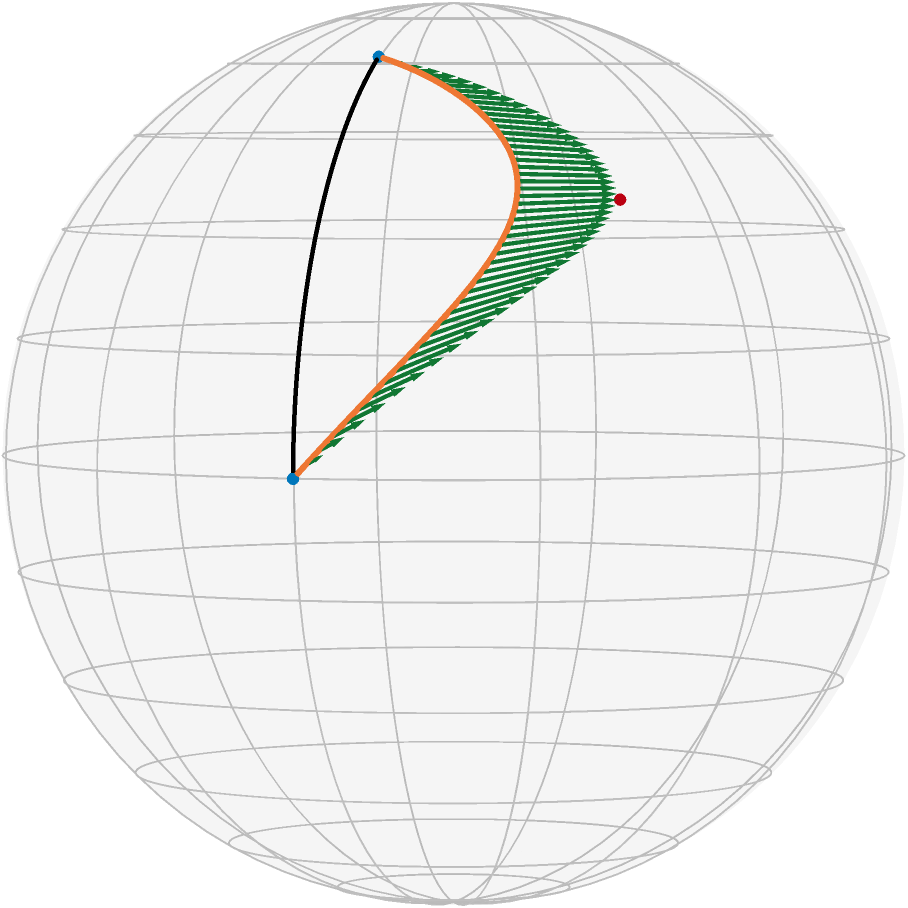}
\end{minipage}
\caption{Superlinear convergence of Algorithm~\ref{alg:ImplementableNewton} for $50$ discretization points and $\alpha = 0.015$ (left). Initial geodesic (black) with Dirichlet boundary conditions (blue), desired state $y_d \equiv \frac{1}{\sqrt{3}}\cdot(1,1,1)^T$ (red) and resulting elastic geodesic (orange). The control is illustrated by green arrows (right).}
\label{fig:geodesicForce}
\end{figure}

\begin{figure}[tb]
\begin{minipage}[t]{0.54\textwidth}
        \begin{tikzpicture}
        \begin{axis}[
            xlabel={Iteration},
            ylabel={$\Vert (\delta y, \delta p, \delta u)\Vert$},
            ymode=log,
            grid=both,
            width=12cm,
            height=8.5cm,
            scale=0.4,
            xtick={1,2,3,4,5,6},
            ytick distance=10^4
        ]
                \addplot table [
            col sep=comma,
            header=false,
            x expr=\coordindex+1,   
            y index=0             
            ]{data/norm_newton_direction_minimalsurface_OCP.csv};
        \end{axis}
        \end{tikzpicture}
\end{minipage}
\begin{minipage}[t]{0.45\textwidth}
    \centering
    \includegraphics[scale=0.5]{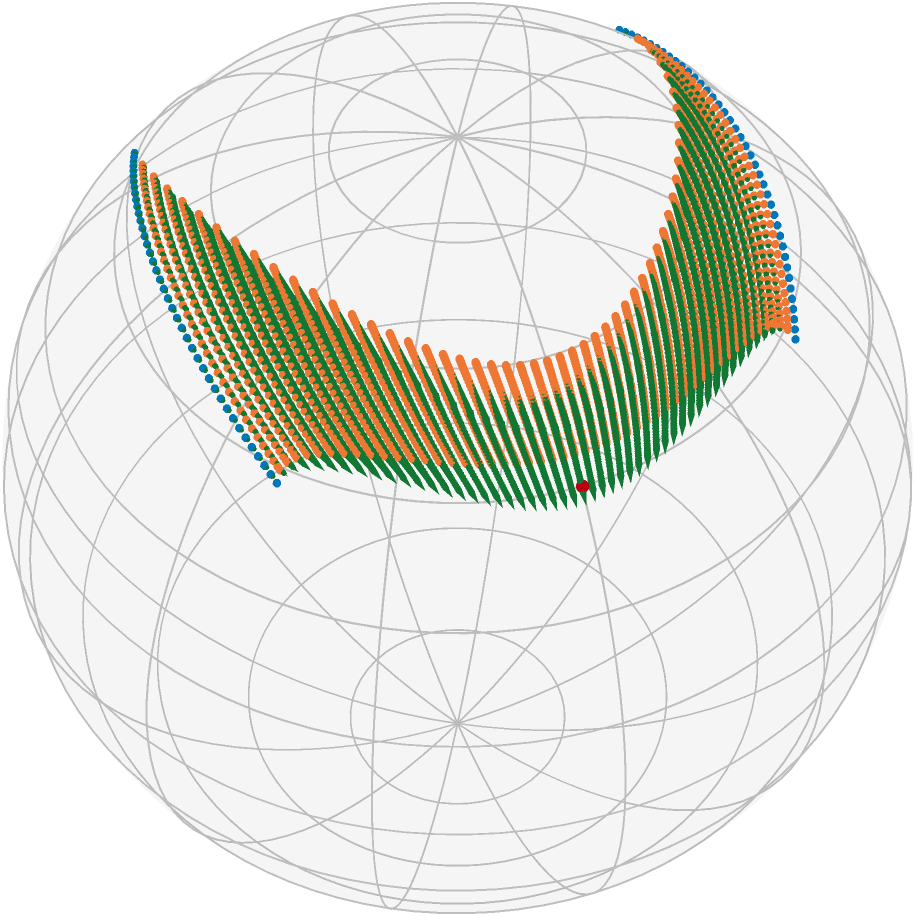}
\end{minipage}
\caption{Superlinear convergence of Algorithm~\ref{alg:ImplementableNewton} for $\alpha = 0.05$ (left). Desired state $y_d \equiv \frac{1}{\sqrt{2}}\cdot(0,1,1)^T$ (red) and resulting surface of minimal energy (orange) with Dirichlet boundary conditions (blue). The control is illustrated by green arrows (right).}
\label{fig:minimalSurface}
\end{figure}

\subsection{Optimal control of inextensible rods and shells}

As a second application we consider a class of problems from continuum mechanics, namely inextensible rods and shell.  Consider a domain $\Omega \subset \R^d$, $d=1,2$. We are looking for a deformation mapping $w : \Omega \to \R^3$. In the case $d=1$ the state $w$ models the deformation of a flexible rod, while $w$ models the deformation of a flexible shell in the case $d=2$. We do not consider the case $d=3$, since an inextensibility constraint in this case would yield simply the rigid body motions.  
For simplicity of presentation, we use the domain $\Omega$ both as the parameter domain, as well as the undeformed body in the absence of forces, so that $w(\Omega)$ is the shape of the deformed body. 

For the spatial derivative of the deformation we introduce the new variable with the aim to derive a mixed formulation of the problem:
\[
 v : \Omega \to \R^{3\times d} : v(\omega) = w'(\omega).
\]
To model \emph{inextensible} bodies we require that $v(\omega)\in \R^{3\times d}$ is an orthogonal matrix, i.e., an element of the compact Stiefel manifold $St(3,d)$ (which is the unit sphere $\mathbb S^2$ in the case $d=1$): 
\[
 v(\omega) \in St(3,d) = \{ Q \in \R^{3\times d} : Q^TQ=I_d \}.
\]
As a control we consider a force field $u \in L_2(\Omega,\R^3)$, attached to the body so that the \emph{forward problem} can be written as the following energy minimization problem:
\[
 \min_{(w,v)} \int_\Omega \frac12\langle v'(\omega),v'(\omega)\rangle_{F,3}-\langle u(\omega),w(\omega)\rangle_{2} \, d\omega \quad \text{s.t.} \quad v(\omega) -w'(\omega)=0.
\]
Here, $\langle\cdot, \cdot\rangle_{F,3}$ is defined for tensors of order $k=3$ in $\R^{3\times d\times d}$ by the generalized Frobenius inner product:
\[
 \langle A,B\rangle_{F,3} := \sum_{i=1}^3\sum_{j_1, j_{2}=1}^d A_{ij_1 j_{2}}B_{ij_1 j_2}.
\]
The first quadratic term in the energy is a bending energy in terms of $v'(\omega) \in \R^{3 \times d \times d}$.
Computation of the optimality conditions of this problem for a fixed force field $u$ yields the following variational problem with a Lagrangian multiplier $\lambda \in L_2(\Omega,\R^{3\times d})$:
\begin{align*}
 0 &= \int_\Omega \langle v'(\omega),\phi_v'(\omega)\rangle_{F,3}+\langle \lambda(\omega),\phi_v(\omega)\rangle_{F} \, d\omega \quad \forall \phi_v \in T_v H^1(\Omega,St(3,d))\\
 0 &= \int_\Omega -\langle u(\omega),\phi_w(\omega)\rangle_{2}-\langle\lambda(\omega),\phi'_w(\omega)\rangle_{F} \, d\omega\quad \forall \phi_w \in H^1(\Omega,\R^3)\\
 0 &= \int_\Omega  \langle v(\omega) -w'(\omega),\phi_\lambda(\omega)\rangle_{F}\, d\omega \hspace{2cm} 
 \forall \phi_\lambda \in L_2(\Omega,\R^{3\times d}).
\end{align*}
This system substantiates the state equation of our optimal control problem. The state is defined as a triplet on a product manifold
\[
y=(w,v,\lambda) \in \mathcal Y :=  H^1(\Omega,\R^3)\times H^1(\Omega,St(3,d)) \times L_2(\Omega,\R^{3\times d}),
\]
while the control is simply the applied force field $u\in L_2(\Omega,\R^3)$. With the combined test function 
\[
 \phi = (\phi_w,\phi_v,\phi_\lambda) \in H^1(\Omega,\R^3)\times T_vH^1(\Omega,St(3,d)) \times L_2(\Omega,\R^{3\times d}) = T_y \mathcal Y^*
\]
we can write
\begin{align*}
  A : \mathcal Y &\to T\mathcal Y^*\\
  A(y)\phi &= \int_\Omega \langle v'(\omega),\phi_v'(\omega)\rangle_{F,3}+\langle \lambda(\omega),\phi_v(\omega)-\phi'_w(\omega)\rangle_{F}+\langle v(\omega) -w'(\omega),\phi_\lambda(\omega)\rangle_{F}\, d\omega
\end{align*}
and 
\begin{align*}
  B : L_2(\Omega,\R^{3}) &\to T\mathcal Y^*\\
  Bu\, \phi &=\int_\Omega \langle u(\omega),\phi_w(\omega)\rangle_{2} \, d\omega.
\end{align*}  
In contrast to the previous example, there is no implicit dependence on $y$ in $B$, since the test functions used for the definition of $B$ are independent of $y$. 

\begin{figure}[t]
\begin{minipage}[t]{0.32\textwidth}
    \begin{tikzpicture}
\begin{axis}[
    view={-20}{32},
    width=5.3cm, height=5cm,
    plot box ratio=1.3 1 1,
    xmin=\xmin, xmax=\xmax,
    ymin=\ymin, ymax=\ymax,
    zmin=\zmin, zmax=\zmax,
    xtick={0.2,0.3,0.4,0.5}, ytick={0.3,0.4,0.5,0.6}, ztick={0.1,0.2,0.3},
    scaled ticks=false,
    tick label style={font=\footnotesize},
    label style={font=\small},
    xlabel={$y_1$}, ylabel={$y_2$}, zlabel={$y_3$},
    grid=major, grid style={gray!25, line width=0.3pt},
]
 
\pgfplotsset{proj/.style={thin, opacity=0.5, TolVibrantOrange}}
\addplot3[proj] table[x expr=\xmax, y=y1, z=z1, col sep=comma] {data/inextensible-rod-OCP-result-alpha-5.csv};
\addplot3[proj] table[x=x1, y expr=\ymax, z=z1, col sep=comma] {data/inextensible-rod-OCP-result-alpha-5.csv};
\addplot3[proj] table[x=x1, y=y1, z expr=\zmin, col sep=comma] {data/inextensible-rod-OCP-result-alpha-5.csv};
 
\addplot3[very thick, color=TolVibrantOrange]
    table[x=x1, y=y1, z=z1, col sep=comma] {data/inextensible-rod-OCP-result-alpha-5.csv};
 
\addplot3[only marks, mark=*, mark size=1.5pt, red]
    coordinates {(0.4,0.4,0.12)};
\addplot3[only marks, mark=*, mark size=1.2pt, red, opacity=0.2]
    coordinates {(\xmax,0.4,0.12) (0.4,\ymax,0.12) (0.4,0.4,\zmin)};

\addplot3[only marks, mark=*, mark size=1.2pt, black]
     coordinates {(0.17181712472104044,0.5748729213074786,0.32009219983223997)};

\end{axis}
\end{tikzpicture}
\end{minipage}
\begin{minipage}[t]{0.32\textwidth}
    \begin{tikzpicture}
\begin{axis}[
    view={-20}{32},
    width=5.3cm, height=5cm,
    plot box ratio=1.3 1 1,
    xmin=\xmin, xmax=\xmax,
    ymin=\ymin, ymax=\ymax,
    zmin=\zmin, zmax=\zmax,
    xtick={0.2,0.3,0.4,0.5}, ytick={0.3,0.4,0.5,0.6}, ztick={0.1,0.2,0.3},
    scaled ticks=false,
    tick label style={font=\footnotesize},
    label style={font=\small},
    xlabel={$y_1$}, ylabel={$y_2$}, zlabel={$y_3$},
    grid=major, grid style={gray!25, line width=0.3pt},
]
 
\pgfplotsset{proj/.style={thin, opacity=0.5, TolVibrantOrange}}
\addplot3[proj] table[x expr=\xmax, y=y1, z=z1, col sep=comma] {data/inextensible-rod-OCP-result-alpha-7.csv};
\addplot3[proj] table[x=x1, y expr=\ymax, z=z1, col sep=comma] {data/inextensible-rod-OCP-result-alpha-7.csv};
\addplot3[proj] table[x=x1, y=y1, z expr=\zmin, col sep=comma] {data/inextensible-rod-OCP-result-alpha-7.csv};
 
\addplot3[very thick, color=TolVibrantOrange]
    table[x=x1, y=y1, z=z1, col sep=comma] {data/inextensible-rod-OCP-result-alpha-7.csv};
 
\addplot3[only marks, mark=*, mark size=1.5pt, red]
    coordinates {(0.4,0.4,0.12)};
\addplot3[only marks, mark=*, mark size=1.2pt, red, opacity=0.2]
    coordinates {(\xmax,0.4,0.12) (0.4,\ymax,0.12) (0.4,0.4,\zmin)};

\addplot3[only marks, mark=*, mark size=1.2pt, black]
     coordinates {(0.17181712472104044,0.5748729213074786,0.32009219983223997)};

\end{axis}
\end{tikzpicture}
\end{minipage}
\begin{minipage}[t]{0.32\textwidth}
    \begin{tikzpicture}
\begin{axis}[
    view={-20}{32},
    width=5.3cm, height=5cm,
    plot box ratio=1.3 1 1,
    xmin=\xmin, xmax=\xmax,
    ymin=\ymin, ymax=\ymax,
    zmin=\zmin, zmax=\zmax,
    xtick={0.2,0.3,0.4,0.5}, ytick={0.3,0.4,0.5,0.6}, ztick={0.1,0.2,0.3},
    scaled ticks=false,
    tick label style={font=\footnotesize},
    label style={font=\small},
    xlabel={$y_1$}, ylabel={$y_2$}, zlabel={$y_3$},
    grid=major, grid style={gray!25, line width=0.3pt},
]
 
\pgfplotsset{proj/.style={thin, opacity=0.5, TolVibrantOrange}}
\addplot3[proj] table[x expr=\xmax, y=y1, z=z1, col sep=comma] {data/inextensible-rod-OCP-result.csv};
\addplot3[proj] table[x=x1, y expr=\ymax, z=z1, col sep=comma] {data/inextensible-rod-OCP-result.csv};
\addplot3[proj] table[x=x1, y=y1, z expr=\zmin, col sep=comma] {data/inextensible-rod-OCP-result.csv};
 
\addplot3[very thick, color=TolVibrantOrange]
    table[x=x1, y=y1, z=z1, col sep=comma] {data/inextensible-rod-OCP-result.csv};
 
\addplot3[only marks, mark=*, mark size=1.5pt, red]
    coordinates {(0.4,0.4,0.12)};
\addplot3[only marks, mark=*, mark size=1.2pt, red, opacity=0.2]
    coordinates {(\xmax,0.4,0.12) (0.4,\ymax,0.12) (0.4,0.4,\zmin)};

\addplot3[only marks, mark=*, mark size=1.2pt, black]
     coordinates {(0.17181712472104044,0.5748729213074786,0.32009219983223997)};

\end{axis}
\end{tikzpicture}

\end{minipage}
\caption{Inextensible rod solution (orange) for $\alpha = 10^{-5}$ (left), $\alpha= 10^{-7}$ (middle), $\alpha=10^{-9}$ (right), desired state $y_d = (0.4,0.4,0.12)^T$ (red) and one-sided Dirichlet boundary conditions (black). The lines and the dot in light colors depict projections onto the coordinate planes.}
\label{fig:rod}
\end{figure}

For the required vector transport we use again an approach that relies on pointwise application of a projection according to \eqref{eq:VTH1}. It is well known that the projection onto a tangent space $T_X St(n,k)$ is given by the formula
\[
 P_X A = A-\frac12 X(X^TA+A^TX).
\]
Then, as a local vector transport for $\delta v \in T_{\hat v} H^1(\Omega,St(3,d))$ to $T_{v}H^1(\Omega,St(3,d))$ we choose
\[
 (\VTforward{\hat v}(v)\delta v)(\omega) := P_{v(\omega)}\delta v(\omega).
\]

Figure~\ref{fig:rod} shows numerical examples for the case $d=1$, i.e. an inextensible, controlled rod. The aim is to minimize the integral over the squared distance between the points of the rod from $y_d \equiv (0.4,0.4,0.12)^T$. Observe, how the rod winds itself around $y_d$, if the control cost is small. 
\begin{figure}[h]

    \centering
    \includegraphics[scale=0.75]{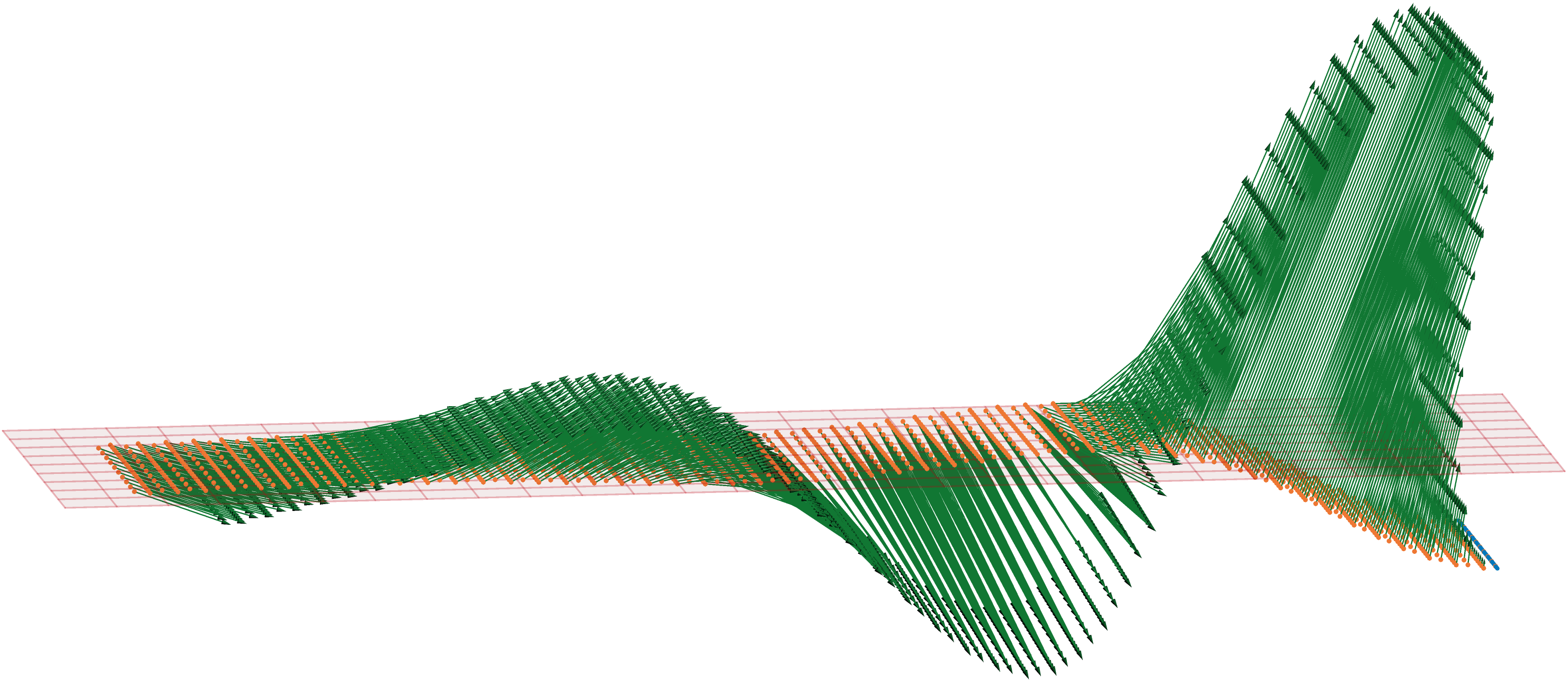}
\caption{Resulting sheet (orange) with Dirichlet boundary conditions (blue). The desired state $y_d$ (red) is the horizontal plane $w_3 = 0.4$. The control is illustrated by green arrows.}
\label{fig:shell}
\end{figure}
\begin{figure}[h]
\begin{minipage}[t]{0.49\textwidth}
        \begin{tikzpicture}
        \begin{axis}[
            xlabel={Iteration},
            ylabel={$\Vert (\delta y, \delta p, \delta u)\Vert$},
            ymode=log,
            grid=both,
            width=15cm,
            height=9cm,
            scale=0.38,
            xtick={1,3,5,7,9,11,13,15,17,19},
            ytick distance=10^4,
            xtick distance=2
        ]
                \addplot table [
            col sep=comma,
            header=false,
            x expr=\coordindex+1,   
            y index=0             
            ]{data/norm_newton_direction_sheet_OCP.csv};
        \end{axis}
        \end{tikzpicture}
\end{minipage}
    \begin{minipage}[t]{0.49\textwidth}
                \begin{tikzpicture}
        \begin{axis}[
            xlabel={Iteration},
            ylabel={$\lambda$},
            grid=both,
            width=11cm,
            height=6.2cm,
            scale=0.6,
            xtick={1,3,5,7,9,11,13,15,17,19},
            ytick distance=0.2,
            xtick distance=2
        ]
                \addplot table [
            col sep=comma,
            header=false,
            x expr=\coordindex+1,   
            y index=0             
            ]{data/stepsizes_sheet_OCP.csv};
        \end{axis}
        \end{tikzpicture}
        \label{fig:stepsizesRod}
    \end{minipage}
\caption{Superlinear convergence of Algorithm~\ref{alg:ImplementableNewton} (left) and computed stepsizes $\lambda\in(0,1]$ (right).}
\label{fig:shellConvergence}
\end{figure}

For the computation of a solution, we use linear Lagrange elements for  $w,v,u,p_w$, and $p_v$, and piecewise constant elements for $\lambda$ and $p_\lambda$ on a uniform partition of the unit interval. 
Since the problem is highly nonlinear, in particular for small $\alpha$, we used a simple homotopy method, decreasing the regularization parameter $\alpha$ in each iteration. Starting from $\alpha = 0.1$ we reduced $\alpha$ sucessively by a factor $(0.1)^{1/3}$ in every homotopy step.

For the case $d=2$, the controlled inextensible shell, we choose a strip of length $5$ and width $1$, which is clamped horizontally at one of the short edges. The aim is to bend the strip upwards so that its integrated square distance to the horizontal plane at height $w_3=0.4$ is minimized, balanced with the control cost. Figure~\ref{fig:shell} shows the computed result for $\alpha=10^{-3}$, including the bending forces. 

For its discretization we used finite element spaces, similar to those proposed in \cite{Bartels2013}: for $w$ and $p_w$ Crouzeix-Raviart-finite elements were used, for $v$ and $p_v$, linear finite elements, augmented by a bubble function were employed. For $\lambda$ and $p_\lambda$ we used again piecewise constant finite elements. 
The problem was solved by a damped Newton method, where an affine covariant damping strategy (cf. \cite[Alg. 2]{weigl2025newton}) was applied for globalization. Figure~\ref{fig:shellConvergence} shows the convergence history of Algorithm~\ref{alg:ImplementableNewton} and the computed damping factors. We observe an initial phase, where the globalization is active, and finally, local superlinear convergence.

\section{Conclusion}

This work can be seen as a first step into the algorithmic treatment of problems in geometric PDE-constrained optimization. The construction of a Newton method for solving the optimality system was enabled by a local splitting, based on vector transports. The computation of the second covariant derivative could be performed by a moving test function technique. Despite the high complexity of the problem, all quantities can be derived from relatively simple algorithmic ingredients, which are available for most of the standard manifolds and admit an efficient implementation.

In this paper we concentrated on the detailed description of the overall framework and the derivation of Newton's method. We plan to discuss aspects, concerning the discretization with finite elements, including details of the necessary constructions in a companion paper. 

Certainly, a number of important theoretical and algorithmic issues are still open and subject to future research. First of all, a functional analytic treatment of concrete classes of problems needs to be done. In particular, our abstract assumptions, e.g. on differentiability, need to be verified for appropriate classes of problems. An important related question is the integration of inequality constraints into our framework. Finally, algorithms that are more specifically constructed to constrained optimization on manifolds, refining, e.g., the ideas of \cite{SchielaOrtiz:2021}, need to be developed.  
\bibliographystyle{plain}
\bibliography{references}
\end{document}

%% file: macros.tex
\newcommand{\E}{\mathcal{E}}

\newcommand{\M}{\mathcal{M}}
\newcommand{\X}{\mathcal{X}}

\newcommand{\Y}{\mathcal{Y}}

\newcommand{\R}{\mathbb{R}}
\renewcommand{\L}{\mathcal{L}}

\newcommand{\DualBundleProjection}{\pi^*}
\newcommand{\coloneqq}{:=}

\newcommand{\id}{\mathrm{id}}

\newcommand{\NE}[1]{{\big\langle{#1}\big\rangle}}

\makeatletter
\newcommand{\oset}[3][0ex]{%
  {\mathop{#3}\limits^{
    \vbox to#1{\kern-2\ex@
    \hbox{\!$#2$}}}}}
\makeatother

\newcommand{\VT}[2]{\oset[-0.35ex]{\scriptscriptstyle#2}{V}_{\!\!#1}}
\newcommand{\VTback}[1]{\oset[-0.35ex]{\scriptscriptstyle\leftarrow}{V}_{\!\!#1}}
\newcommand{\VTforward}[1]{\oset[-0.35ex]{\scriptscriptstyle\rightarrow}{V}_{\!\!#1}}
\newcommand{\VVTforward}[1]{\oset[0.0ex]{\scriptscriptstyle\rightrightarrows}{V}_{\!\!#1}}
\newcommand{\WWTforward}[1]{\oset[0.0ex]{\scriptscriptstyle\rightrightarrows}{W}_{\!\!#1}}

\newcommand{\WTforward}[1]{\oset[-0.35ex]{\scriptscriptstyle\rightarrow}{W}_{\!\!#1}}
\newcommand{\VTprime}[2]{{\oset[-0.6ex]{\scriptscriptstyle#2}{V'}_{\!\!\!\!#1}}}
\newcommand{\WTprime}[2]{{\oset[-0.6ex]{\scriptscriptstyle#2}{W'}_{\!\!\!\!#1}}}
\newcommand{\VTstar}[1]{\oset[-0.4ex]{\!\!\scriptscriptstyle\leftarrow}{V^{*}}_{\!\!\!\!\!\!#1}\;}

%% file: data/Vektortransport.pdf_tex
\begingroup%
  \makeatletter%
  \providecommand\color[2][]{%
    \errmessage{(Inkscape) Color is used for the text in Inkscape, but the package 'color.sty' is not loaded}%
    \renewcommand\color[2][]{}%
  }%
  \providecommand\transparent[1]{%
    \errmessage{(Inkscape) Transparency is used (non-zero) for the text in Inkscape, but the package 'transparent.sty' is not loaded}%
    \renewcommand\transparent[1]{}%
  }%
  \providecommand\rotatebox[2]{#2}%
  \newcommand*\fsize{\dimexpr\f@size pt\relax}%
  \newcommand*\lineheight[1]{\fontsize{\fsize}{#1\fsize}\selectfont}%
  \ifx\svgwidth\undefined%
    \setlength{\unitlength}{370.37416473bp}%
    \ifx\svgscale\undefined%
      \relax%
    \else%
      \setlength{\unitlength}{\unitlength * \real{\svgscale}}%
    \fi%
  \else%
    \setlength{\unitlength}{\svgwidth}%
  \fi%
  \global\let\svgwidth\undefined%
  \global\let\svgscale\undefined%
  \makeatother%
  \begin{picture}(0.9,0.302657469)%
    \lineheight{1}%
    \setlength\tabcolsep{0pt}%
    \put(0.3,0){\includegraphics[scale=0.4,page=1]{data/Vektortransport.pdf}}%
    \put(0.40593244,0.1251338703){\color[rgb]{0,0,0}\makebox(0,0)[lt]{\lineheight{1.25}\smash{\begin{tabular}[t]{l}$\hat{y}$\end{tabular}}}}%
    \put(0.5333688,0.13268998){\color[rgb]{0,0,0}\makebox(0,0)[lt]{\lineheight{1.25}\smash{\begin{tabular}[t]{l}$y$\end{tabular}}}}%
    \put(0.39348351,0.17786222){\color[rgb]{0,0,0}\makebox(0,0)[lt]{\lineheight{1.25}\smash{\begin{tabular}[t]{l}$\hat{e}$\end{tabular}}}}%
    \put(0.525394583,0.17990135){\color[rgb]{0,0,0}\makebox(0,0)[lt]{\lineheight{1.25}\smash{\begin{tabular}[t]{l}$e$\end{tabular}}}}%
    \put(0.42853552,0.28183978){\color[rgb]{0,0,0}\makebox(0,0)[lt]{\lineheight{1.25}\smash{\begin{tabular}[t]{l}$\VT{\hat{y}}{\rightarrow}(y)$\end{tabular}}}}%
    \put(0.43369228,0.02810717){\color[rgb]{0,0,0}\makebox(0,0)[lt]{\lineheight{1.25}\smash{\begin{tabular}[t]{l}$E_{\hat{y}}$\end{tabular}}}}%
    \put(0.54937263,0.02446334){\color[rgb]{0,0,0}\makebox(0,0)[lt]{\lineheight{1.25}\smash{\begin{tabular}[t]{l}$E_{y}$\end{tabular}}}}%
    \put(0.69333688,0.10268998){\color[rgb]{0,0,0}\makebox(0,0)[lt]{\lineheight{1.25}\smash{\begin{tabular}[t]{l}$\Y$\end{tabular}}}}%
    \put(0.3,0){\includegraphics[scale=0.4,page=2]{data/Vektortransport.pdf}}%
  \end{picture}%
\endgroup%

%% file: references.bib
@article {Bartels2013,
    AUTHOR = {Bartels, S\"oren},
     TITLE = {Finite element approximation of large bending isometries},
   JOURNAL = {Numer. Math.},
  FJOURNAL = {Numerische Mathematik},
    VOLUME = {124},
      YEAR = {2013},
    NUMBER = {3},
     PAGES = {415--440},
      ISSN = {0029-599X,0945-3245},
   MRCLASS = {65N30 (74K20 74S05)},
  MRNUMBER = {3066035},
MRREVIEWER = {Igor\ Bock},
       DOI = {10.1007/s00211-013-0519-7},
       URL = {https://doi.org/10.1007/s00211-013-0519-7},
}

@article{Bergmann2022,
    Author    = {Ronny Bergmann},
    Doi       = {10.21105/joss.03866},
    Journal   = {Journal of Open Source Software},
    Number    = {70},
    Pages     = {3866},
    Publisher = {The Open Journal},
    Title     = {{M}anopt.jl: {O}ptimization on {M}anifolds in {J}ulia},
    Volume    = {7},
    Year      = {2022},
}

@article{weigl2024newton,
  title={Newton's method for nonlinear mappings into vector bundles},
  author={Weigl, Laura and Schiela, Anton},
  journal={arXiv preprint arXiv:2404.04073},
  year={2024}
}

@article{weigl2025newton,
  title={Newton's method for nonlinear mappings into vector bundles {P}art {II}: {A}pplication to variational problems},
  author={Weigl, Laura and Bergmann, Ronny and Schiela, Anton},
  journal={arXiv preprint arXiv:2507.13836},
  year={2025}
}

@software{carlsson_ferrite_2025,
  author    = {Carlsson, Kristoffer and Ekre, Fredrik and {Ferrite.jl contributors}},
  title     = {{Ferrite.jl}},
  version   = {1.2.0},
  year      = {2025},
  publisher = {Zenodo},
  doi       = {10.5281/zenodo.17608426},
  note       = {https://github.com/Ferrite-FEM/Ferrite.jl}
}

@article{loayza2026retraction,
  title={Retraction based regression methods on Riemannian manifolds},
  author={Loayza-Romero, Estefan{\'\i}a and Sibum, Benedikt and Welker, Kathrin},
  journal={arXiv preprint arXiv:2606.02116},
  year={2026}
}

@book{troltzsch2010optimal,
  title={Optimal control of partial differential equations: theory, methods, and applications},
  author={Tr{\"o}ltzsch, Fredi},
  volume={112},
  year={2010},
  publisher={American Mathematical Soc.}
}

@book{hinze2008optimization,
  title={Optimization with PDE constraints},
  author={Hinze, Michael and Pinnau, Ren{\'e} and Ulbrich, Michael and Ulbrich, Stefan},
  year={2008},
  publisher={Springer Science \& Business Media}
}

@article{desimone2002reduced,
  title={A reduced theory for thin-film micromagnetics},
  author={DeSimone, Antonio and Kohn, Robert V and M{\"u}ller, Stefan and Otto, Felix},
  journal={Communications on Pure and Applied Mathematics: A Journal Issued by the Courant Institute of Mathematical Sciences},
  volume={55},
  number={11},
  pages={1408--1460},
  year={2002},
  publisher={Wiley Online Library}
}

@inproceedings{heeren2012time,
  title={Time-discrete geodesics in the space of shells},
  author={Heeren, Behrend and Rumpf, Martin and Wardetzky, Max and Wirth, Benedikt},
  booktitle={Computer Graphics Forum},
  volume={31},
  number={5},
  pages={1755--1764},
  year={2012},
  organization={Wiley Online Library}
}

@article{weinmann2014total,
  title={Total variation regularization for manifold-valued data},
  author={Weinmann, Andreas and Demaret, Laurent and Storath, Martin},
  journal={SIAM Journal on Imaging Sciences},
  volume={7},
  number={4},
  pages={2226--2257},
  year={2014},
  publisher={SIAM}
}

@article{hardt1986existence,
  title={Existence and partial regularity of static liquid crystal configurations},
  author={Hardt, Robert and Kinderlehrer, David and Lin, Fang-Hua},
  journal={Communications in mathematical physics},
  volume={105},
  number={4},
  pages={547--570},
  year={1986},
  publisher={Springer}
}

@book{agrachev2013control,
  title={Control theory from the geometric viewpoint},
  author={Agrachev, Andrei A and Sachkov, Yuri},
  year={2013},
  publisher={Springer Science \& Business Media}
}

@book{schattler2012geometric,
  title={Geometric optimal control: theory, methods and examples},
  author={Sch{\"a}ttler, Heinz and Ledzewicz, Urszula},
  volume={38},
  year={2012},
  publisher={Springer}
}

@article{goodman2026riemannian,
  title={Riemannian Metric Preconditioning for Trajectory Tracking},
  author={Goodman, Jacob R and Jasa, Hajg},
  journal={arXiv preprint arXiv:2606.18913},
  year={2026}
}

@book{bartels2015numerical,
  title={Numerical methods for nonlinear partial differential equations},
  author={Bartels, S{\"o}ren},
  volume={47},
  year={2015},
  publisher={Springer}
}

@incollection{hardering2020geometric,
  title={Geometric finite elements},
  author={Hardering, Hanne and Sander, Oliver},
  booktitle={Handbook of variational methods for nonlinear geometric data},
  pages={3--49},
  year={2020},
  publisher={Springer}
}

@incollection{hajlasz2009sobolev,
  title={Sobolev mappings between manifolds and metric spaces},
  author={Hajlasz, Piotr},
  booktitle={Sobolev Spaces In Mathematics I: Sobolev Type Inequalities},
  pages={185--222},
  year={2009},
  publisher={Springer}
}

@article{Julia,
    title={{J}ulia: {A} fresh approach to numerical computing},
    author={Bezanson, Jeff and Edelman, Alan and Karpinski, Stefan and Shah, Viral B},
    journal={SIAM {R}eview},
    volume={59},
    number={1},
    pages={65--98},
    year={2017},
    publisher={SIAM},
    doi={10.1137/141000671},
    url={https://epubs.siam.org/doi/10.1137/141000671}
}

@book{Lang,
author = {Lang, Serge},
title = {{F}undamentals of differential geometry},
publisher ={Springer Science \& Business Media},
volume = {191},
year = {2012}
}

@article{BezansonEdelanKarpinskiViral:2017,
    AUTHOR    = {Bezanson, Jeff and Edelman, Alan and Karpinski, Stefan and Shah, Viral B},
    JOURNAL   = {SIAM review},
    NUMBER    = {1},
    PAGES     = {65--98},
    PUBLISHER = {SIAM},
    TITLE     = {Julia: {A} fresh approach to numerical computing},
    DOI       = {10.1137/141000671},
    VOLUME    = {59},
    YEAR      = {2017}
}

@book{absil2008optimization,
  title={{O}ptimization algorithms on matrix manifolds},
  author={Absil, P-A and Mahony, Robert and Sepulchre, Rodolphe},
  year={2008},
  publisher={Princeton University Press}
}

@book{boumal2023intromanifolds,
  author    = {Boumal, Nicolas},
  title     = {{A}n introduction to optimization on smooth manifolds},
  publisher = {Cambridge University Press},
  year      = {2023},
  url       = {https://www.nicolasboumal.net/book},
  doi       = {10.1017/9781009166164}
}

@article{SchielaOrtiz:2021,
author = {Schiela, Anton and Ortiz, Julian},
title = {An SQP Method for Equality Constrained Optimization on Hilbert Manifolds},
journal = {SIAM Journal on Optimization},
volume = {31},
number = {3},
pages = {2255-2284},
year = {2021},
doi = {10.1137/20M1341325},
}

@online{AxenBaranBergmannRzecki:2023, 
    author = {Seth D. Axen and Mateusz Baran and Ronny Bergmann and Krzysztof Rzecki},
    title = {{M}anifolds.jl: {A}n {E}xtensible {J}ulia {F}ramework for {D}ata {A}nalysis on {M}anifolds},
    year = {2021},
    eprint = {2106.08777},
    eprinttype = {arXiv},
}

@book {MarsdenRaitu:1994,
    AUTHOR = {Marsden, Jerrold E. and Ratiu, Tudor S.},
     TITLE = {Introduction to mechanics and symmetry},
    SERIES = {Texts in Applied Mathematics},
    VOLUME = {17},
      NOTE = {A basic exposition of classical mechanical systems},
 PUBLISHER = {Springer-Verlag, New York},
      YEAR = {1994},
     PAGES = {xvi+500},
      ISBN = {0-387-97275-7; 0-387-94347-1},
   MRCLASS = {58F05 (58-01 70-01 70Hxx)},
  MRNUMBER = {1304682},
MRREVIEWER = {Gerard\ Thompson},
       DOI = {10.1007/978-1-4612-2682-6},
       URL = {https://doi.org/10.1007/978-1-4612-2682-6},
}

@Inbook{Schulz2015,
author="Schulz, Volker H.
and Siebenborn, Martin
and Welker, Kathrin",
editor="Pratelli, Aldo
and Leugering, G{\"u}nter",
title="Towards a Lagrange--Newton Approach for PDE Constrained Shape Optimization",
bookTitle="New Trends in Shape Optimization",
year="2015",
publisher="Springer International Publishing",
address="Cham",
pages="229--249",
isbn="978-3-319-17563-8",
doi="10.1007/978-3-319-17563-8_10",
url="https://doi.org/10.1007/978-3-319-17563-8_10"
}

@article{bergmann2022first,
  title={First-and second-order analysis for optimization problems with manifold-valued constraints},
  author={Bergmann, Ronny and Herzog, Roland and Ortiz L{\'o}pez, Juli{\'a}n and Schiela, Anton},
  journal={Journal of Optimization Theory and Applications},
  volume={195},
  number={2},
  pages={596--623},
  year={2022},
  publisher={Springer}
}

@article{zeidler1986nonlinear,
  title={Nonlinear {F}unctional {A}nalysis and {I}t's {A}pplications},
  author={Zeidler, Eberhard},
  journal={Fixed-Point Theorems},
  pages={897},
  year={1986},
  publisher={Springer-Verlag}
}

@article{helein2008harmonic,
  title={Harmonic maps},
  author={H{\'e}lein, Fr{\'e}d{\'e}ric and Wood, John C},
  journal={Handbook of global analysis},
  volume={1213},
  pages={417--491},
  year={2008},
  publisher={Elsevier Amsterdam}
}
